\allowdisplaybreaks \numberwithin{equation}{section}
\numberwithin{equation}{section}
\newtheorem{theorem}{Theorem}[section]
\newtheorem{corollary}[theorem]{Corollary}
\newtheorem{lemma}[theorem]{Lemma}
\theoremstyle{definition}
\theoremstyle{remark}
\newtheorem{remark}[theorem]{Remark}
\newcommand{\ep}{\varepsilon}
\begin{document}

	\title
	[Steady vortex patches on flat torus]{Steady vortex patches on flat torus with a constant background vorticity} 
	
	\author{Takashi Sakajo, Changjun Zou}
	
	\address{Department of Mathematics, Kyoto University, Kyoto, 606--8502, Japan}
	\email{sakajo@math.kyoto-u.ac.jp}
	\address{Department of Mathematics, Sichuan University, Chengdu, Sichuan, 610064, P.R. China}
	\email{zouchangjun@amss.ac.cn}
	
	\thanks{}

	\begin{abstract}
		We construct a series of vortex patch solutions in a doubly-periodic rectangular domain (flat torus), which is accomplished by studying the contour dynamic equation for patch boundaries. We will illustrate our key idea by discussing the single-layered patches as the most fundamental configuration, and then investigate the general construction for $N$ patches near a point vortex equilibrium. Different with the case of bounded domains in $\mathbb R^2$, a constant background vorticity will arise from the compact nature of flat torus, and the $2$-dimensional translational invariance will bring troubles on determining patch locations. To overcome these two difficulties, we will add additional  terms for background vorticity and introduce a centralized condition for location vector. By utilizing the regularity difference of terms in contour dynamic equations, we also obtain the $C^\infty$ regularity and convexity of boundary curves.
		
	\end{abstract}
	
	\maketitle{\small{\bf Keywords:}  Incompressible Euler equation; Vortex patch problem; Constant background vorticity; Contour dynamic equation \\ 
		
		{\bf 2020 MSC} Primary: 76B47; Secondary: 76B03.}
	
	\tableofcontents

	\section{Introduction}\label{sec1}

		The well-posedness of Euler equation is one of the central topics in fluid mechanics dating back to Leonhard Euler and his masterpiece “Principes g\'en\'eraux du mouvement des fluides”. Compared to the 3D case where several difficult problems remain open, the study for well-posedness of the incompressible 2D Euler equation has made considerable strides in the last century: 
		\begin{align} \label{Euler-eq}
			\begin{cases}
				\partial_t\omega+\boldsymbol{v}\cdot \nabla\omega =0&\text{in}\ \mathbb R^2,\\
				\boldsymbol{v}=\nabla^\perp(-\Delta)^{-1}\omega    &\text{in}\ \mathbb R^2,\\
				\omega\big|_{t=0}=\omega_0,
			\end{cases}
		\end{align}
		where $\boldsymbol v$ is the velocity field, $\omega$ is the vorticity scalar function, the first equation is the conservation of momentum, and the second line is Biot-Savart law with $(x_1,x_2)^\perp=(x_2,-x_1)$.
		For  smooth initial vorticity $\omega_0$, the global existence and uniqueness of smooth solutions of the Cauchy problem (\ref{Euler-eq})   went back to the work of H\"older \cite{Ho} and Wolibner \cite{Wo} in 1930s. 
		For initial vorticity only in $L^\infty$, the global existence and uniqueness of weak solution were proved by Yudovich \cite{Yud} in 1963. 
		Then DiPerna and Majda \cite{Dip} proved the existence of weak solutions for $\omega_0$ in $L^1\cap L^p$, and Delorit~\cite{Del} investigated a general situation where the vorticity is a signed measure in $H^{-1}$. 
		We refer to the book by Majda and Bertozzi \cite{MB} as a convenient introduction for the whole story.
		
		Besides the well-posedness of the Cauchy problem, another interesting topic is the construction for different kind of global solutions to the Euler equation, which maintain their shape during time evolution. 
		For the 3D Euler equation, the Hill spherical vortex \cite{Hil}, the vortex rings of small cross-section \cite{Fra1}, and the helical vortices \cite{Dav2} are such typical global solutions. 
		In the 2D situation, due to the structure of nonlinear term $\boldsymbol{v}\cdot \nabla\omega $, any radically symmetric function $\omega(|\boldsymbol x|)$  formulate a trivial steady solution in $\mathbb R^2$. 
		Among the nontrivial ones, the Lamb-Chaplygin diplole \cite{Lamb} and the Kirchhoff ellipse \cite{Kir} are two famous examples. 
		On the other hand, some special configurations of singular point vortices constitute the second kind of global solutions. 
		For example, the traveling vortex dipole and the rotating $k$-fold vortices are the most fundamental ones in $\mathbb R^2$.  
		

		One process constructing such relatively regular solutions is to approximate singular point vortex configuration, which  is called the regularization or desingularization of equilibrium states of point vortices. 
		A point vortex corresponds to a singular point-wise vorticity distribution with Dirac's $\delta$ measure at a point.
		In \cite{T}, Turkington applied a dual variational principle of Arnol'd \cite{Ar2} to regularize single steady point vortex by patch solutions in bounded and unbounded domain, which is accomplished by maximizing the kinetic energy in an arrangement class of vorticity function $\omega$. 
		This approach, also known as the vorticity method nowadays, is greatly developed by Cao et al.~\cite{Cao4,CQZZ2,CWZ} for the regularization with general vorticity distribution functions. 
		By letting $\psi=(-\Delta)^{-1}\omega$ be the Stokes stream function, another mainstream for the regularization is to consider the semi-linear elliptic equation on $\psi$, which is known as the stream function method. 
		Smets and Van Schaftingen~\cite{SV} applied this method together with the mountain pass variational techniques to obtain a family of $C^1$ localized vorticity solutions, and Cao et al.~\cite{Cao3} obtained the patch type regularization for an $N$-point vortex system by singular perturbation theory.
		We refer the interested readers to \cite{Ao,CQZZ3} for more information on this method.

		Besides the two methods stated above, a different approach, called the method of contour dynamics, is known. 
		It can be traced Burbea \cite{Burb} who studied the dynamics of vortex patch boundaries so as to obtain bifurcation curves of nontrivial solutions from the unit disc. For solutions on one curve, they are all of $k$-fold symmetry and rotating near a specific angular velocity, also known as the V-states.
		However, there is a gap for derivation on Hardy space in \cite{Burb}, which is mended by Hmidi et al. \cite{Hmi} in recent years using suitable H\"older spaces. 
		Since then, the method of contour dynamics has been widely used in the construction of V-states for active scalar equations.
		For instance, De La Hoz, Hassaina and Hmidi \cite{de2} proved the existence of doubly-connected vortex patches bifurcated from annuli at a specific angular velocity, and $2$-fold symmetric V-states near Kirchhoff elliptic vortices \cite{Kir} and V-states with smooth vorticity distribution were discovered by Castro, C\'ordoba and G\'omez-Serrano~\cite{Cas1,Cas2}.
		By using the global implicit theorem, Hassainia, Masmoudi, and Wheeler \cite{HMW} made continuation on the bifurcation parameter by an analytic tool, and studied the global behavior of solutions.
		Berti, Hassainia and Masmoudi~\cite{BHM} obtained qusi-periodic patches by Nash-Moser iteration. 
		There are also some parallel results for the generalized surface quasi-geostrophic (gSQG) equation, see \cite{Cas,Cas3,de1,Has}.
		
		Compared with plenty of results on the construction of V-states stated above, the method of contour dynamics is newly applied in the regularization of point vortices. 
		For instance, this method is used to construct traveling and co-rotating patches for gSQG equation (including 2D Euler equation) in the whole plane $\mathbb R^2$~\cite{HM,Cao1}. 
		It should be noticed that these two cases are only two special situations of regularization, since there is no influence of domain boundary nor background vorticity.   
		The purpose of the present is to establish the existence of a stationary vortex patch solution of the 2D Euler equation near a point-vortex equilibrium in a doubly connected domain.  
		We will also investigate the $C^\infty$ regularity and the convexity of patch boundary curves by further analysis, which cannot be derived by the standard vorticity and stream function method.
		
		Flows in the doubly periodic domains are often used in numerical calculations of two-dimensional homogeneous, isotropic turbulence. 
		It has been pointed out that the interaction of coherent vortex structures plays an important role in the understanding of the statistical laws of 2D turbulence \cite{Jimenez07,McWilliams84}. 
		Point vortices and vortex patches are the simple mathematical models of such interacting coherent vortex structures.
		Although a lattice configuration of a stationary vortex patch was obtained numerically by using the method of shape derivative \cite{Uda18}, the existence of stationary vortex patch lattice in the double periodic domain is not well established mathematically. 
		Tkachenko \cite{Tka66} considered the motion of a one-point vortex in a doubly periodic domain in a rotating frame of reference, and Benzi and Lagras \cite{BeLe87} discussed the interaction of point vortices embedded in a continuous background vorticity.
		The equation of $N$ point vortices in a doubly periodic domain was derived by O'Neil \cite{ONeil87}, which was used to investigate an equilibrium state of point vortices with some symmetry \cite{ONeil89}.
		In addition, under the condition that the sum of the point vortex circulations is zero, the equation of point-vortex dynamics have been derived in several forms, providing some stationary point vortex equilibria \cite{Crowdy10, StAr99, Weiss91}. 
		On the other hand, Krishnamurthy and  Sakajo \cite{VS-flat-torus} generalized the equations of $N$ point vortices in a rectangular doubly-periodic domain when the sum of the vortex strengths is non-zero, thereby finding many point vortex lattice equilibria. 
		These point vortex equilibria are used to construct stationary vortex patches here.
		
		Our paper is organized as follows. In Section \ref{sec:results}, we provide the mathematical formulation of the problem and state the main results. 
		In Section \ref{sec2}, we provide the Green function of $-\Delta$ in a doubly periodic domain by complex analysis and conformal mapping, and we then derive the contour dynamic equation for single-layered patches. 
		In Section \ref{sec3}, we establish the functional analysis frame for the construction in suitable Hilbert spaces on $\mathbb S^1$, and linearized the equation at point vortex equilibrium. 
		The main theorems are proved by a modified implicit functional theorem and bootstrap procedure separately in Section \ref{sec4} . 
		In Section \ref{sec5}, we generalize the method of construction, where a similar approach is applied on a vector form equation. 
		Some necessary properties of the $P$-function and its logarithmic derivative in Section \ref{sec2} are discussed in Appendix \ref{appA}.

		\section{Mathematical formulation and main results}\label{sec:results}
		We consider the steady 2D Euler equation in a doubly-periodic rectangular domain (flat torus)  $\mathbb{T}^2:(0,2\pi)\times (0,-\log\rho)$ with $0<\rho<1$, which reads
		\begin{align}\label{1-1}
			\begin{cases}
				\boldsymbol{v}\cdot \nabla\omega =0&\text{in}\ \mathbb{T}^2,\\
				\boldsymbol{v}=\nabla^\perp(-\Delta)^{-1}\omega    &\text{in}\ \mathbb{T}^2,\\
				\omega(x_1,0)=\omega(x_1,-\log\rho) &\text{for}\ x_1\in (0,2\pi),\\
				\omega(0,x_2)=\omega(2\pi,x_2) &\text{for}\ x_2\in (0,-\log\rho).
			\end{cases}
		\end{align}
		To elucidate our main strategy, we begin with the most fundamental single-layered point-vortex equilibrium embedded in a constant vorticity \cite{VS-flat-torus}.
		For $0<d<2\pi/N$, $0<h<-\log\rho$, let
		$$\boldsymbol P_n=\left(d_0+\frac{2\pi n}{N} \right), \quad n=0,1,\cdots,N-1,$$
		be $N$ points in $\mathbb{T}^2$ located at the row $x_2=h$. 
		It was shown that
		\begin{equation}\label{1-2}
			\omega^*(\boldsymbol x)=\pi\sum_{n=0}^{N-1}\boldsymbol\delta_{\boldsymbol P_n}-N\pi
		\end{equation}
		gives a steady configuration of point vortices in $\mathbb{T}^2$ by studying the Green function of $-\Delta$, where $\boldsymbol\delta_{\boldsymbol P_n}$ denotes the Delta measure at $\boldsymbol P_n$.
		Thus we shall construct steady patch solutions to \eqref{1-1} taking the form 
		\begin{equation}\label{1-3}
			\omega(\boldsymbol x)=\frac{1}{\ep^2}\sum_{n=0}^{N-1}\boldsymbol 1_{\Omega_n^\ep}-\gamma_\ep,
		\end{equation}
		where $\boldsymbol 1_\Omega$ denotes the characteristic function of domain $\Omega$, 
		$$\Omega_n^\ep=\Omega_0^\ep+\frac{2\pi n}{N}\boldsymbol e_1, \quad n=0,1,\cdots,N-1$$ 
		be $N$ small areas as perturbation of the circle with radius $\ep>0$ around $\boldsymbol P_n$, say  $B_\ep(\boldsymbol P_n)$, and $\gamma_\ep\in \mathbb{R}$ the strength of a constant background vorticity near $N\pi$. 
		Actually, we will use $\gamma_\ep$ as the bifurcation parameter in the functional analysis frame of the construction, which ensures that the contour dynamic equation induces an isomorphism between well-selected Hilbert spaces. 
		Then by applying the implicit function theorem near at the singular point vortex equilibrium $\omega^*$ in \eqref{1-2}, we have following result on the existence of single-layered vortex patches.
		\begin{theorem}\label{thm1}
			There exists $\ep_0>0$ such that for any $\varepsilon\in [0,\ep_0)$, \eqref{1-1} has a steady patch solution \eqref{1-3}, where the strength of background vorticity $\gamma_\ep$ satisfies
			$$\gamma_\ep=N\pi+O(\varepsilon).$$
		\end{theorem}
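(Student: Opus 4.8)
The plan is to reformulate the steady patch problem \eqref{1-1}--\eqref{1-3} as a nonlinear functional equation for the boundary of $\Omega_0^\ep$, parametrized over the circle $\mathbb S^1$, and then to solve it by an implicit function theorem argument with $\gamma_\ep$ playing the role of the bifurcation parameter. Concretely, I would write the boundary $\partial\Omega_0^\ep$ as a graph over $\partial B_\ep(\boldsymbol P_0)$ in polar-type coordinates, say $r(\theta)=\ep(1+f(\theta))$ with $f$ lying in a suitable Hilbert space on $\mathbb S^1$ (the $H^s$-type space from Section \ref{sec3}), and enforce periodicity and the rotational symmetry $\Omega_n^\ep=\Omega_0^\ep+\tfrac{2\pi n}{N}\boldsymbol e_1$ at the level of the ansatz so that only the single profile $f$ is unknown. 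The stationarity condition is that the patch boundary be a level set of the stream function $\psi=(-\Delta)^{-1}\omega$ in the co-moving frame; using the Green function of $-\Delta$ on $\mathbb T^2$ derived in Section \ref{sec2}, together with the explicit contribution $-\gamma_\ep$ of the background vorticity, this yields the contour dynamic equation $G(\ep,\gamma_\ep,f)=0$ for an appropriate nonlinear map $G$ between Hilbert spaces, with $G(0,N\pi,0)=0$ corresponding to the point-vortex equilibrium $\omega^*$ in \eqref{1-2}.

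Next I would compute the linearization of $G$ at the point $(\ep,\gamma_\ep,f)=(0,N\pi,0)$. The leading singular part of the stream function near each $\boldsymbol P_n$ is the logarithmic kernel, whose restriction to the boundary circle reproduces, after differentiation, the classical Fourier multiplier that appears in the Burbea/Hmidi-type analysis: the $k$-th Fourier mode of $f$ is mapped to a multiple of itself with coefficient behaving like a nonzero constant times $(1-1/k)$ or similar, while the regular (Robin-function and background) parts contribute only lower-order, smoothing corrections. The role of choosing $\gamma_\ep$ as the unknown rather than, say, a rotation speed, is that it shifts the multiplier by a constant and removes the kernel: one checks that $\partial_f G(0,N\pi,0)$ is an isomorphism between the chosen spaces (the $k=0$ and $k=1$ modes, which would otherwise be problematic because of area and translation invariance, are handled by the symmetry reduction and by fixing the center $\boldsymbol P_0$, i.e.\ by working on the subspace of profiles orthogonal to those modes, or equivalently by the centralized condition alluded to in the abstract). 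Granting this, the implicit function theorem produces, for each small $\ep$, a unique $\gamma_\ep$ and profile $f_\ep$ solving $G(\ep,\gamma_\ep,f_\ep)=0$, with $\gamma_\ep$ and $f_\ep$ depending continuously (indeed smoothly) on $\ep$; since $\gamma_0=N\pi$ this gives $\gamma_\ep=N\pi+O(\ep)$, and reconstructing $\omega$ from $f_\ep$ via \eqref{1-3} yields the asserted steady patch solution.

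The main obstacle I anticipate is twofold and concentrated in the linearization step. First, one must set up function spaces in which $G$ is well-defined and $C^1$: the contour dynamic equation involves a singular integral operator (the tangential derivative of a layer potential against the Green function), so the spaces must be chosen so that this operator is bounded and so that the nonlinear remainder is genuinely higher order; the periodic Green function's non-logarithmic part, expressed through the $P$-function of Section \ref{sec2} and its logarithmic derivative (whose properties are deferred to Appendix \ref{appA}), must be shown to contribute a compact/smoothing perturbation uniformly for small $\ep$. Second, and more delicate, is verifying invertibility of $\partial_f G(0,N\pi,0)$ \emph{after} the symmetry reduction: one must confirm that restricting to $N$-fold symmetric, suitably centered perturbations genuinely kills the cokernel coming from the two translational invariances and the area constraint, and that no new small divisors appear for the retained modes $k\ge 2$ (or $k\ge 1$ within each symmetry class). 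Once the isomorphism is in hand, the estimates needed to apply the implicit function theorem are essentially bookkeeping, and the expansion $\gamma_\ep=N\pi+O(\ep)$ follows from the smooth dependence of the solution branch on $\ep$ together with $\gamma_0=N\pi$.
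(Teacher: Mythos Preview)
Your overall plan coincides with the paper's: parametrize $\partial\Omega_0^\varepsilon$ radially by $R(s)=1+\varepsilon u(s)$, derive the contour dynamic equation $F(\varepsilon,\gamma,R)=0$ from $\boldsymbol v\cdot\mathbf n=0$ on the boundary, verify $C^1$-regularity of $F$ between Sobolev spaces on $\mathbb S^1$, compute the linearization at $(0,N\pi,1)$, and invoke the implicit function theorem. Where your account goes wrong is the mechanism by which $\gamma$ cures the degeneracy. The linearization $\partial_u F(0,\gamma,1)$ in Lemma~\ref{lem3-3} is the Fourier multiplier $\cos(js)\mapsto\tfrac{j-1}{2}\sin(js)$ and is \emph{independent of} $\gamma$; so $\gamma$ does not ``shift the multiplier by a constant and remove the kernel.'' Rather, the paper first imposes even symmetry in $s$ so that $u$ lives in the cosine space $X^k$ and $F$ lands in the sine space $Y^{k-1}$, and then treats $\gamma$ by a Lyapunov--Schmidt step (Lemma~\ref{lem4-1}): one solves explicitly for $\gamma=\gamma(\varepsilon,R)=N\pi+O(\varepsilon)$ so that the $\sin(s)$ Fourier coefficient of $F$ vanishes, whence the reduced map $\tilde F(\varepsilon,R)=F(\varepsilon,\gamma(\varepsilon,R),R)$ takes values in $Y_0^{k-1}$, on which the multiplier $(j-1)/2$ with $j\ge 2$ is invertible. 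The ``centralized condition'' you mention is the analogous device for the general configuration in Theorem~\ref{thm3}, where $2N$ location parameters replace the single scalar $\gamma$; it is not used for Theorem~\ref{thm1}. Once you make this correction the remainder of your outline (smooth dependence on $\varepsilon$, hence $\gamma_\varepsilon=N\pi+O(\varepsilon)$) goes through exactly as in Section~\ref{sec4}.
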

		
		\begin{remark}
			In the above theorem, we use the single-layered configuration as an example to show the existence of patch solutions. 
			However, due to the doubly-periodicity of $\mathbb{T}^2$, any $M$-layered patch solutions with $M\ge1$ can be constructed in the same way, as long as the distance of two neighboring layers is $h/M$. 
			These $M$-layered patches constitute the first class of patch solutions with nonzero ground vorticity, of which only a few is known nowadays near corresponding point vortex equilibria, see \cite{VS-flat-torus}. 
			On the other hand, solutions with zero background vorticity can also be constructed by our approach, where the positive and negative patches have equivalent circulations. 
			For this case, the proof is much more easier compared with that for Theorem \ref{thm1} since the linearized operator of contour dynamic equation will formulate an isomorphism itself.
		\end{remark}
		
		\begin{remark}
			For simplicity, in the construction for single-layered vortex patches, we assume that all $\Omega_n^\ep$, $n=0,\cdots,N-1$ are generated by translation and of a same shape, and even-symmetric with $x_2=h$.
			The first assumption simplifies the whole system to only one contour dynamic equation, and the second ensures that the parametrization of $\partial\Omega_n^\ep$ is only expanded by those even $\cos(js)$.
			In fact, these two constraints are unnecessary in the construction for general patch configurations, which will be showed in the proof of Theorem \ref{thm3}.
		\end{remark}  
		
		Besides the existence of vortex patches, another interesting topic is the regularity and convexity of the patch boundaries $\partial \Omega_n^\ep$, $n=0,\cdots, N-1$. For V-state solutions to gSQG equation, the $C^\infty$ regularity of patch boundary is proved in \cite{Cas,Hmi} via complex analysis and bootstrap procedure. 
		A similar method is applied in \cite{Cao1}, where the boundary smoothness and convexity for traveling and co-rotating patches for gSQG equation with $0<s\le1/2$ in $(-\Delta)^s$ are proved. 
		However, as far as we know, there is no such result on the regularization of 2D Euler equation serving as the most classic situation. 
		To give an affirmative answer to this question, we will prove the following theorem based on the the regularity  of the parch boundaries.
		\begin{theorem}\label{thm2}
			For the steady patch solution \eqref{1-2} obtained in Theorem \ref{thm1}, the patch area $\Omega_0^\ep$ is a convex domain, whose boundary can be parameterized by a smooth function $R(s)\in C^\infty(\mathbb S^1)$.
		\end{theorem}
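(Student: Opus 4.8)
The plan is to extract both assertions --- smoothness of the parametrization $R$ and convexity of $\Omega_0^\ep$ --- directly from the contour dynamic equation of Section~\ref{sec2}, using as input only that, by Theorem~\ref{thm1}, $R$ already lies in the Sobolev space $X$ employed there (which embeds into $C^{1,\alpha}(\mathbb S^1)$) and that $\|R/\ep-1\|_{X}=O(\ep)$. First recall from Section~\ref{sec2} the splitting of the Green function of $-\Delta$ on $\mathbb T^2$,
\begin{equation*}
 G(\boldsymbol x,\boldsymbol y)=-\frac{1}{2\pi}\log|\boldsymbol x-\boldsymbol y|+H(\boldsymbol x,\boldsymbol y),
\end{equation*}
where $H$, assembled from the $P$-function studied in Appendix~\ref{appA}, is real-analytic in a neighbourhood of the diagonal. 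Writing $z(s)=\boldsymbol P_0+R(s)\,\boldsymbol e_r(s)$ with $\boldsymbol e_r(s)=(\cos s,\sin s)$ and $\boldsymbol e_\theta(s)=(-\sin s,\cos s)$, steadiness of \eqref{1-3} amounts to $\partial\Omega_0^\ep$ being a streamline, i.e. $\nabla\psi(z(s))\cdot z'(s)=0$ on $\mathbb S^1$, where $\psi=(-\Delta)^{-1}\omega$ (by the translation symmetry of \eqref{1-3} it suffices to treat $\Omega_0^\ep$). Split $\psi=\psi_{\mathrm{s}}+\psi_{\mathrm{r}}$, with $\psi_{\mathrm{s}}$ the self-contribution of $\Omega_0^\ep$ through the singular kernel $-\frac1{2\pi}\log|\cdot|$, and $\psi_{\mathrm{r}}$ the contribution of $H$, of the constant background $-\gamma_\ep$, and of the remaining patches $\Omega_n^\ep$ ($n\neq0$), all of which sit at a fixed positive distance from $\partial\Omega_0^\ep$; thus $\psi_{\mathrm{r}}$ is, near $\partial\Omega_0^\ep$, the potential of a $C^\infty$ kernel.

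Since $z'(s)=R'(s)\,\boldsymbol e_r(s)+R(s)\,\boldsymbol e_\theta(s)$, the equation reads
\begin{equation*}
 R'(s)\,a(s)=-R(s)\,\big[\nabla\psi(z(s))\cdot\boldsymbol e_\theta(s)\big],\qquad a(s):=\nabla\psi(z(s))\cdot\boldsymbol e_r(s).
\end{equation*}
Here is the ``regularity difference'' of the abstract. By the divergence theorem, $\nabla\psi_{\mathrm{s}}(z(s))=-\frac1{2\pi}\int_{\mathbb S^1}\log|z(s)-z(s')|\,z'(s')^{\perp}\,ds'$, and the operator $g\mapsto\int\log|z(\cdot)-z(\cdot')|\,g(\cdot')\,ds'$ is smoothing of order one on $\mathbb S^1$ whenever the curve is $C^{1,\alpha}$ with $z'\neq0$ and non-self-intersecting (satisfied here, as $|z'|^2=R'^2+R^2\ge R^2>0$ and $R$ is close to the constant $\ep$); hence $\nabla\psi_{\mathrm{s}}(z(\cdot))$ is one derivative smoother than $z$. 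Likewise $\nabla\psi_{\mathrm{r}}(z(\cdot))$, being the trace of a $C^\infty$ vector field along the curve, is at least as regular as $z$. Consequently, if $R\in H^{m}$ ($m>1/2$) then $a$ and $\nabla\psi(z(\cdot))\cdot\boldsymbol e_\theta$ lie in $H^{m}$; moreover $a$ equals, to leading order, the (radial, nonvanishing) self-field of the disc, so $|a(s)|\gtrsim\ep^{-1}$ for $\ep<\ep_0$. Dividing,
\begin{equation*}
 R'=-\,R\,\big[\nabla\psi(z(\cdot))\cdot\boldsymbol e_\theta\big]\big/a\in H^{m}
\end{equation*}
(a quotient of $H^{m}$ functions with denominator bounded below, $H^{m}$ being a Banach algebra for $m>1/2$), so $R\in H^{m+1}$. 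Iterating from the base case $R\in X$ yields $R\in\bigcap_{m\ge1}H^{m}(\mathbb S^1)=C^{\infty}(\mathbb S^1)$; this is the route taken in \cite{Hmi,Cao1} for related problems.

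I expect the real work to be in this bootstrap: one must verify the order-one smoothing estimate for the logarithmic potential along the only \emph{a priori} $C^{1,\alpha}$ curve at the first stage, check that $a$ stays bounded away from zero along the iteration, and --- the point specific to the torus --- confirm that $-\frac1{2\pi}\log|\boldsymbol x-\boldsymbol y|$ is the \emph{sole} singular part of $G$ near the diagonal, so that $H$, and hence $\psi_{\mathrm{r}}$, really is smooth; this last fact is precisely what the analysis of the $P$-function in Appendix~\ref{appA} supplies. For the convexity, the same identity $R'a=-R\,[\nabla\psi(z(\cdot))\cdot\boldsymbol e_\theta]$ together with $\gamma_\ep=N\pi+O(\ep)$ and the estimates of Theorem~\ref{thm1} shows that the perturbation of the circle is quantitatively small, $\|R-\ep\|_{C^{2}(\mathbb S^1)}=O(\ep^{2})$ (using the now-established smoothness and the Sobolev embedding of the space in which the bootstrap places $R$). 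Since for a polar graph $z(s)=\boldsymbol P_0+R(s)\boldsymbol e_r(s)$ the signed curvature is
\begin{equation*}
 \kappa(s)=\frac{R(s)^{2}+2R'(s)^{2}-R(s)R''(s)}{\big(R(s)^{2}+R'(s)^{2}\big)^{3/2}},
\end{equation*}
its numerator equals $\ep^{2}\big(1+O(\ep)\big)>0$ once $\ep_0$ is shrunk, so $\partial\Omega_0^\ep$ is a simple closed curve of everywhere positive curvature and therefore bounds a convex domain. No new idea is needed here beyond making the $C^{2}$-smallness of the perturbation quantitative, which is already contained in the estimates above.
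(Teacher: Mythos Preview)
Your bootstrap is correct and reaches the same conclusion as the paper, but the two arguments are organised differently. The paper differentiates the contour equation $F=0$ a total of $k-1$ times and writes the result as $L(\partial^{k-1}u)+S(\partial^{k-1}u)=J(u)$ (equation~\eqref{4-3}), where $L$ is the explicit convolution operator \eqref{4-4} obtained by freezing the curve at the unit circle; the gain of one derivative comes from the invertibility $L^{-1}:H^1\to H^2$, proved by a direct Fourier computation (Lemma~\ref{lem4-2}), while $S$ is an $o(1)$ perturbation in $\varepsilon$ (Lemma~\ref{lem4-3}) and $J(u)\in H^1$ whenever $u\in H^k$ (Lemma~\ref{lem4-4}). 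You instead keep the equation in the undifferentiated form $R'\,a=-R\,[\nabla\psi(z(\cdot))\cdot\boldsymbol e_\theta]$ and obtain the extra derivative \emph{algebraically} once both factors on the right lie in $H^m$ and $|a|\gtrsim\varepsilon^{-1}$; the analytic input is then the layer-potential estimate $z\in H^m\Rightarrow\nabla\psi_{\mathrm s}(z(\cdot))\in H^m$ (a small slip: this is ``as smooth as $z$'', i.e.\ one derivative smoother than the density $z'\in H^{m-1}$, not ``one derivative smoother than $z$'' as you wrote --- your subsequent use is consistent with the correct reading). Your route is more geometric and bypasses the Fourier diagonalisation of $L$, at the cost of having to establish the layer-potential mapping property on a curve whose own regularity improves along the iteration, which is precisely the step you flag as the real work; the paper's decomposition makes that step concrete by separating the fixed singular kernel $\log(4\sin^2(t/2))^{-1/2}$ from the curve-dependent remainder and absorbing the latter into $S$ and $J$. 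The convexity argument --- computing the signed curvature of the polar graph and using $R=\varepsilon(1+O(\varepsilon))$ in $C^2$ --- is identical to the paper's Theorem~\ref{thm4-7}.
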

		
		\begin{remark}
			In Theorem \ref{thm2}, we obtain the $C^\infty$ regularity and the convexity of patch boundary, which is an advantage of considering contour dynamic equation for patch boundaries. 
			In \cite{T}, by the energy comparison and the earrangement inequalities, Turkington claims that unit disk is the limit function for each patch as $\ep\to 0^+$, but his variational approach cannot describe the regularity of patch boundary for each $\ep>0$. 
			While a perturbation method is applied near a scaled Rankine vortex to obtain the $C^1$ regularity of patch boundary in \cite{Cao1}, and it is the best result by stream function method limited by the regularity theory for elliptic operators.  
			However, to derive the convexity of $\Omega_0^\ep$, at least a $C^2$ estimate is required for the calculation of curvature, which is achieved in the present paper by a bootstrap procedure for the contour dynamic equation. 
		\end{remark}
		
		\smallskip
		
		Following the idea in the proof of Theorem \ref{thm1}, we can further construct patch solutions near a general steady point vortex configuration. Unlike the single-layered case where we assume that all patch are even-symmetric with a same shape so that we can disturb $\gamma_\ep$ only, for the general $N$-patch configuration we need to adjust $2N$ parameters. Thus these $N$ patches are assumed to be located near an $N$-point vortex equilibrium, and we will determine $2N$ center location parameters in the last step.
		According to formula \eqref{2-4} in Section \ref{sec2}, the Green function for $-\Delta$ in $\mathbb{T}^2$ can be expanded as 
		\begin{equation*}
			G(\boldsymbol x,\boldsymbol y)=\frac{1}{2\pi}\log \frac{1}{|\boldsymbol x-\boldsymbol y|}+H(\boldsymbol x,\boldsymbol y)-\frac{1}{4\pi\log\rho}|\boldsymbol x-\boldsymbol y|^2,
		\end{equation*}
		which is also the stream function for a single point vortex located at $\boldsymbol y\in \mathbb{T}^2$ with the unit circulation. 
		Using this fact, we can write down the Kirchhoff-Routh path function as the Hamiltonian for an $N$-point vortex system with circulation parameters $(\kappa_1,\cdots,\kappa_N)\in \mathbb R^N$:
		\begin{equation}\label{1-4}
			\mathcal W_N(\boldsymbol x_1,\cdots,\boldsymbol x_N)=\sum_{n,m=1,n\le m}^N\kappa_m\kappa_nG(\boldsymbol x_m,\boldsymbol x_n)+\frac{1}{2}\sum_{m=1}^N \kappa_m^2H(\boldsymbol x_m,\boldsymbol x_m).
		\end{equation}
		According to \cite{Lin}, every critical point $\boldsymbol X^0=(\boldsymbol x^0_1,\cdots,\boldsymbol x^0_N)\in \mathbb{T}^N$ of $\mathcal W_N$ corresponds to an equilibrium state of point vortex
		\begin{equation}\label{1-5}
			\omega^*(\boldsymbol x)=\sum_{n=1}^N\kappa_n\boldsymbol\delta_{\boldsymbol x^0_n}-\sum_{n=1}^N\kappa_n.
		\end{equation}
		Since the flat torus $\mathbb{T}^2$ has two-dimensional translational invariance, the critical points of \eqref{1-4} are at least two-dimensional degenerate, namely, $\mathrm{Rank}\left(\mathrm{Hess}(\mathcal W_N(\boldsymbol X^0))\right)=2N-2$. 
		This phenomenon is different with general bounded subdomains in $\mathbb R^2$, where in most situations the critical point can be non-degenerate.
		
		Thus, to fix the locations of vortex patches, we give the following definition: A vector $\boldsymbol X=(\boldsymbol x_1,\cdots,\boldsymbol x_N)\in  (\mathbb{T}^2)^N$ is said to be \emph{centralized} if the center of $\boldsymbol X$ is at $(\pi,-\frac{\log\rho}{2})$, namely,
		\begin{equation*}
			\sum\limits_{n=1}^Nx_{n1}=N\pi, \quad\quad \mathrm{and} \quad\quad \sum\limits_{n=1}^Nx_{n2}=-\frac{N\log\rho}{2},
		\end{equation*}
		where $\boldsymbol x_n=(x_{n1}, x_{n2})$.
		By assuming that $\boldsymbol X^0=(\boldsymbol x^0_1,\cdots,\boldsymbol x^0_N)$ is a two-dimensional degenerate centralized critical point of $\mathcal W_N(\boldsymbol x_1,\cdots,\boldsymbol x_N)$ condition, we will construct a family of patch solutions taking the form
		\begin{equation}\label{1-6}
			\omega(\boldsymbol x)=\frac{1}{\ep^2}\sum_{n=1}^{N}\frac{\kappa_n}{\pi}\boldsymbol 1_{\Omega_n^\ep}-\sum_{n=1}^N\kappa_n,
		\end{equation}
		where $\Omega_1^\ep,\cdots,\Omega_N^\ep$ are $N$ small areas as the perturbation of disks $B_\ep(\boldsymbol x_1^\ep),\cdots,B_\ep(\boldsymbol x_N^\ep)$ (the radius of these $N$ disks can be different, but we unify them to $\ep$ for simplicity), and the centralized location vector $\boldsymbol X^\ep=(\boldsymbol x^\ep_1,\cdots,\boldsymbol x^\ep_N)$ is an $O(\ep)$-perturbation of $\boldsymbol X^0=(\boldsymbol x^0_1,\cdots,\boldsymbol x^0_N)$. 
		By studying the  contour dynamic equations on the $N$ boundaries of $\Omega_n^\ep$, $n=1,\cdots,N$, the general existence theorem is obtained as follows.
		\begin{theorem}\label{thm3}
			Fix the circulation parameters $(\kappa_1,\cdots,\kappa_N)\in \mathbb R^N$, if $\boldsymbol X^0\in (\boldsymbol x_1,\cdots,\boldsymbol x_N)\in (\mathbb{T}^2)^N$ is a centralized critical point of $\mathcal W_N$ in \eqref{1-4} such that 
			$$\mathrm{Rank}\,\left(\mathrm{Hess}(\mathcal W_N(\boldsymbol X^0))\right)=2N-2,$$
			then there exists $\epsilon_0>0$ such that for any $\varepsilon\in [0,\epsilon_0)$, \eqref{1-1} has a steady patch solution \eqref{1-6}, where the patch location vector $\boldsymbol X^\ep\in (\boldsymbol x_1^\ep,\cdots,\boldsymbol x_N^\ep)\in (\mathbb{T}^2)^N$ is centralized, and satisfies
			$$\boldsymbol X^\ep=\boldsymbol X^0+O(\ep)$$
			Moreover, each patch area $\Omega_m^\ep$ is a convex domain, whose boundary can be parameterized by a smooth function $R_m(s)\in C^\infty(\mathbb S^1)$.
		\end{theorem}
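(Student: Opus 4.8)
The plan is to transpose the strategy of Theorems~\ref{thm1}--\ref{thm2} to a coupled system: in place of a single contour dynamic equation and the scalar parameter $\gamma_\ep$, we work with $N$ equations and with the vector $\boldsymbol X=(\boldsymbol x_1,\dots,\boldsymbol x_N)$ of patch centers as the parameter. Identifying $\mathbb R^2$ with $\mathbb C$, parametrize the $m$-th boundary by
$$\partial\Omega_m^\ep=\Big\{\,\boldsymbol x_m+\ep\,(1+g_m(s))\,(\cos s,\sin s)\ :\ s\in\mathbb S^1\,\Big\},$$
with $g_m$ in a Hilbert space $X$ of functions on $\mathbb S^1$ whose Fourier modes $j=0$ and $j=\pm1$ are suppressed: the $j=0$ gauge is fixed by prescribing the area (equivalently the circulation $\kappa_m$) of $\Omega_m^\ep$, just as $\gamma_\ep$ fixes the area in Theorem~\ref{thm1}, and the $j=\pm1$ gauge is fixed by letting the translation of $\Omega_m^\ep$ be carried by $\boldsymbol x_m$ alone. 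Using the Green function decomposition of Section~\ref{sec2}, the stationarity of \eqref{1-6} is equivalent to $(-\Delta)^{-1}\omega$ being constant along each $\partial\Omega_m^\ep$; differentiating in $s$ and rescaling by an appropriate power of $\ep$ recasts this as a smooth map
$$F=(F_1,\dots,F_N)\ :\ \mathcal U\subset X^N\times\mathcal C\times[0,\ep_0)\ \longrightarrow\ Y^N,$$
where $\mathcal C$ is the affine space of \emph{centralized} location vectors, $Y$ is the space of mean-zero functions on $\mathbb S^1$, and $F(0,\dots,0,\boldsymbol X^0,0)=0$. In the decomposition of the stream function into the self-interaction of $\Omega_m^\ep$, the mutual interactions, the regular part $H$ and the constant background, only the self-interaction survives at leading order, and it alone is non-smoothing (a zeroth-order operator on $\partial\Omega_m^\ep$); this regularity gap will be used twice.

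Differentiating $F$ at the equilibrium, the $g$-part is block diagonal, the $m$-th block being the classical linearized V-state operator of the 2D Euler patch at a disk with zero angular velocity (see \cite{Burb,Hmi}), which on the basis $\{\cos(js),\sin(js)\}_{j\ge2}$ of $X$ acts as a Fourier multiplier $g_m\mapsto\sum_{j\ge2}c_j\big(a_j^{(m)}\cos(js)+b_j^{(m)}\sin(js)\big)$ with every $c_j\neq0$; hence the whole $g$-part is an isomorphism of $X^N$ onto the subspace $Y_{\ge2}^N\subset Y^N$ of functions carrying no $j=1$ mode. The $j=1$ component of $F$ is instead controlled through the centers: differentiating the $j=1$ part of $F_{m'}$ with respect to $\boldsymbol x_m$ yields, up to a fixed rotation and a nonzero factor, the $(m',m)$-block of the linearization at $\boldsymbol X^0$ of the point-vortex equilibrium relations $\nabla_{\boldsymbol x_{m'}}\mathcal W_N=0$ attached to \eqref{1-5}; consequently the resulting $2N\times2N$ matrix coincides, up to that rotation, with $\mathrm{Hess}(\mathcal W_N(\boldsymbol X^0))$. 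By hypothesis its rank is $2N-2$ and its kernel is the two-dimensional diagonal (``global translation'') direction, which meets $T\mathcal C$ only at $0$; hence its restriction to centralized displacements is injective, with image the codimension-$2$ subspace $\{(\boldsymbol v_1,\dots,\boldsymbol v_N):\sum_m\boldsymbol v_m=0\}$ of the $j=1$ part of $Y^N$. Combining the two pieces, $D_{(g,\boldsymbol X)}F$ at the equilibrium is an isomorphism of $X^N\times T\mathcal C$ onto the codimension-$2$ subspace $Z\subset Y^N$ cut out by the two functionals $\Lambda_k(\boldsymbol y)=\sum_m\int_{\mathbb S^1}y_m(s)\,e_k(s)\,ds$, $e_1=\cos s$, $e_2=\sin s$.

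These two obstructions are harmless: $\Lambda_1\circ F$ and $\Lambda_2\circ F$ vanish identically on $\mathcal U$, because the total $j=1$ moment of $F$ is, order by order in $\ep$, the translation-derivative of an interaction functional inheriting the two-parameter translation invariance of \eqref{1-1} (at leading order this is just $\sum_m\nabla_{\boldsymbol x_m}\mathcal W_N\equiv0$). Thus $F$ in fact takes values in $Z$, and a modified implicit function theorem --- equivalently, a Lyapunov--Schmidt reduction whose two-dimensional reduced equation is $0=0$ for the same reason --- yields, for every small $\ep$, a centralized $\boldsymbol X^\ep=\boldsymbol X^0+O(\ep)$ and shapes $g_1,\dots,g_N=O(\ep)$ solving $F=0$, i.e.\ a steady patch solution of the form \eqref{1-6}. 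The $C^\infty$ regularity and convexity of each $\partial\Omega_m^\ep$ then follow exactly as in Theorem~\ref{thm2}: writing $F_m=0$ as $g_m=(\text{smoothing remainder in }g_1,\dots,g_N)$ and iterating, the regularity gap upgrades $g_m\in X$ to $g_m\in C^\infty(\mathbb S^1)$, after which the signed curvature of the resulting $O(\ep)$ perturbation of a circle is computed directly and is positive for $\ep$ small.

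The crux --- and essentially the only new point beyond the single-layer construction --- is the identification of the $2N\times2N$ center-coupling matrix with $\mathrm{Hess}(\mathcal W_N(\boldsymbol X^0))$ together with the verification that $\Lambda_1,\Lambda_2$ annihilate the range of $F$ to all orders. Both rely on a careful first-order expansion in $\ep$ of the mutual-interaction and regular-part contributions to the stream function, keeping track of their $j=1$ Fourier content, combined with the structural fact that the whole construction is equivariant under the two-parameter translation group of $\mathbb T^2$ while the centralized normalization provides a slice transverse to its orbits. Everything else is a lengthy but essentially routine adaptation of the arguments in Sections~\ref{sec3}--\ref{sec4}.
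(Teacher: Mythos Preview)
Your proposal follows essentially the same strategy as the paper's Section~\ref{sec5}: reduce to a coupled system of $N$ contour dynamic equations, use the center vector $\boldsymbol X$ to absorb the $j=1$ Fourier modes via the rank-$(2N-2)$ condition on $\mathrm{Hess}(\mathcal W_N)$ together with the centralized constraint, then apply the implicit function theorem and the bootstrap/curvature argument of Section~\ref{sec4}. The organizational difference---you handle $(\boldsymbol g,\boldsymbol X)$ jointly via Lyapunov--Schmidt, whereas the paper first eliminates $\boldsymbol X^\ep=\boldsymbol X^\ep(\ep,\boldsymbol R)$ in Lemma~\ref{lem5-2} and then runs the IFT in $\boldsymbol R$ alone---is cosmetic, and your explicit discussion of why the two residual obstructions vanish is actually more careful than the paper's terse ``solvable by the two-dimensional degenerate condition''.

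One detail does need adjusting. The leading $j=1$ part of $F_m$ is proportional to the \emph{velocity} of the $m$-th point vortex, i.e.\ to $\kappa_m^{-1}\nabla_{\boldsymbol x_m}\mathcal W_N$, not to $\nabla_{\boldsymbol x_m}\mathcal W_N$ itself (see the $\kappa_n$-weights in \eqref{5-1} and \eqref{5-3}). Consequently the center-coupling matrix is $\mathrm{diag}(\kappa_m^{-1})\cdot\mathrm{Hess}(\mathcal W_N(\boldsymbol X^0))$ rather than the bare Hessian, its image is the $\kappa$-weighted hyperplane $\{\sum_m\kappa_m\boldsymbol v_m=0\}$, and the correct annihilating functionals are the weighted sums $\Lambda_k^{(\kappa)}(\boldsymbol y)=\sum_m\kappa_m\int_{\mathbb S^1}y_m(s)e_k(s)\,ds$. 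These do vanish on $F$---at leading order because $\sum_m\nabla_{\boldsymbol x_m}\mathcal W_N\equiv0$, and at all orders because $\sum_m\kappa_m F_m$ is the derivative along the global-translation direction of the translation-invariant energy---so your mechanism is right, but with your unweighted $\Lambda_k$ the argument would fail whenever the $\kappa_m$ are not all equal. The kernel of the coupling matrix is still the unweighted diagonal, so the (unweighted) centralized slice remains transverse to it, and nothing else in the scheme changes.
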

		
		\begin{remark}
			Since the shapes of $\Omega_n^\ep$, $n=1,\cdots,N$ are diverse and no symmetry is assumed, the construction for general situation stated in Theorem \ref{thm3} will be transformed into solving an integro-differential system with $N$ equations, and the solution $R_m(s)$ is composed of both $\cos(js)$ and $\sin(js)$. 
			Thus, instead of letting the background vorticity $\gamma_\ep$ change as in the proof of Theorem \ref{thm1}, here we will adjust the location vector $\boldsymbol X^\ep$ so that the linearized operator will formulate an isomorphism.
			Actually, this condition is equivalent to  an algebraic system for $2N$ variables, which can be solved using the two-dimensional degenerate property of $\mathcal W_N(\boldsymbol X^0)$ together with the two-dimensional centralized assumption.
		\end{remark}    
		
		\begin{remark}
			By defining the Kirchhoff-Routh function $\mathcal W_N$ in \eqref{1-4}, we assume that the background vorticity is contributed by $-\sum_{n=1}^N\frac{\kappa_n}{4\pi\log\rho}|\boldsymbol x-\boldsymbol x_n|^2$ from all $N$ vortices. 
			This is not the only choice but the most standard one, especially when considering the dynamic of vortices. 
			For other types of background vorticity, one can also apply our method of construction by finding out the critical point of corresponding the Kirchhoff-Routh function.
		\end{remark}    
		
		\smallskip

		\bigskip
		
	   \section{The Green function and contour dynamics}\label{sec2}
	   
	   In this section, we will obtain the precise expansion for the Green function $G$ by complex analysis, and derive the equation for the patch boundary.
	   
	   \subsection{Expansion for the Green function $G$}
	   
	   The Green function $G(\boldsymbol x, \boldsymbol y)$ for $\mathbb{T}^2$ will satisfy
	   \begin{equation}\label{2-1}
	   	-\Delta_{\boldsymbol x}G(\boldsymbol x,\boldsymbol y)=\boldsymbol\delta_{\boldsymbol y}-\frac{1}{|D|}
	   \end{equation} 
	   together with the doubly-periodic boundary conditions in \eqref{1-1}. 
	   Note that the term $1/|D|$ is required by the Gauss divergence theorem for the compact nature of the domain, and $|D|=-2\pi\ln\rho>0$ is the area of $\mathbb{T}^2$. 
	   
	   To derive a precise expansion for $G(\boldsymbol x, \boldsymbol y)$ in \eqref{2-1}, instead of dealing with domain $D\subset\mathbb R^2$ directly, we consider a doubly-periodic rectangle $\mathcal D_{\boldsymbol z}\subset \mathbb C$ in the complex $\boldsymbol z$-plane with the lattice defined by the fundamental pair of periods $2\pi$ and $-\log\rho$, where the transform is given by $$\boldsymbol x=(x_1,x_2)\in \mathbb{T}^2 \to \boldsymbol z=x_1+\mathbf i x_2\in \mathcal D_{\boldsymbol z}.$$
	   In these complex settings, the planar Laplacian $-\Delta$ can be denoted by the Wirtinger derivative $-4\partial^2_{\boldsymbol z\bar{\boldsymbol z}}$, and the hydrodynamic Green function $\mathcal G(\boldsymbol z,\boldsymbol w;\bar{\boldsymbol z},\bar{\boldsymbol w})$ on $\mathcal D_{\boldsymbol z}$ can be defined as the unique real-valued function for $\boldsymbol z,\boldsymbol w\in \mathbb C$, such that the following conditions hold.
	   
	   \begin{itemize}
	   	\item[(i)] $\mathcal G(\boldsymbol z,\boldsymbol w;\bar{\boldsymbol z},\bar{\boldsymbol w})$ has a logarithmic singularity at $\boldsymbol z=\boldsymbol w$ and satisfies the equation
	   	$$-4\partial^2_{\boldsymbol z\bar{\boldsymbol z}}\mathcal G=\boldsymbol \delta_{\boldsymbol w}-\frac{1}{|\mathcal D_{\boldsymbol z}|},$$
	   	which implies the existence of a function 
	   	$$\mathcal H(\boldsymbol z,\boldsymbol w;\bar{\boldsymbol z},\bar{\boldsymbol w}):=\mathcal G(\boldsymbol z,\boldsymbol w;\bar{\boldsymbol z},\bar{\boldsymbol w})-\frac{1}{2\pi}\log\frac{1}{|\boldsymbol z-\boldsymbol w|}.$$
	   	It is regular in  $\mathcal D_{\boldsymbol z}$ satisfying
	   	$$-4\partial^2_{\boldsymbol z\bar{\boldsymbol z}}\mathcal H=\frac{1}{2\pi\log\rho}.$$
	   	\item[(ii)] $\mathcal G(\boldsymbol z,\boldsymbol w;\bar{\boldsymbol z},\bar{\boldsymbol w})$ is doubly-periodic in both arguments:
	   	$$\mathcal G(\boldsymbol z+2m\pi-\mathbf{i}n\log\rho,\boldsymbol w;\bar{\boldsymbol z}+2m\pi-\mathbf{i}n\log\rho,\bar{\boldsymbol w})=\mathcal G(\boldsymbol z,\boldsymbol w;\bar{\boldsymbol z},\bar{\boldsymbol w}),$$
	   	$$\mathcal G(\boldsymbol z,\boldsymbol w+2m\pi-\mathbf{i}n\log\rho;\bar{\boldsymbol z},\bar{\boldsymbol w}+2m\pi-\mathbf{i}n\log\rho)=\mathcal G(\boldsymbol z,\boldsymbol w;\bar{\boldsymbol z},\bar{\boldsymbol w})$$
	   	for all $m,n\in\mathbb Z$.
	   	\item[(iii)] $\mathcal G(\boldsymbol z,\boldsymbol w;\bar{\boldsymbol z},\bar{\boldsymbol w})$ is symmetric with respect to $\boldsymbol z$ and $\boldsymbol w$:
	   	$$\mathcal G(\boldsymbol z,\boldsymbol w;\bar{\boldsymbol z},\bar{\boldsymbol w})=\mathcal G(\boldsymbol w,\boldsymbol z;\bar{\boldsymbol w},\bar{\boldsymbol z}).$$
	   \end{itemize}	
	   
	   \smallskip
	   
	   To obtain the exact formula for $\mathcal G(\boldsymbol z,\boldsymbol w;\bar{\boldsymbol z},\bar{\boldsymbol w})$, we introduce  a conformal map
	   \begin{equation}\label{2-2}
	   	\boldsymbol z(\boldsymbol \zeta)=-\mathbf i\log\boldsymbol\zeta
	   \end{equation}
	   from the complex $\boldsymbol \zeta$-plane to the complex $\boldsymbol z$-plane. 
	   Then the pre-image of $\mathcal D_{\boldsymbol z}$ is the annulus
	   \begin{equation}\label{annular-domain}
	   	\mathcal D_{\boldsymbol \zeta}=\{\boldsymbol\zeta\in\mathbb C \mid \rho<|\boldsymbol \zeta|\le 1, 0\le\arg \boldsymbol z<2\pi \}.
	   \end{equation}
	   The Laplacian in $\mathcal D_{\boldsymbol z}$ transforms to
	   $$-4\partial^2_{\boldsymbol z\bar{\boldsymbol z}}=\frac{4}{|\boldsymbol z'(\boldsymbol\zeta)|^2}\partial^2_{\boldsymbol \zeta\bar{\boldsymbol \zeta}}=-4|\boldsymbol\zeta|^2\partial^2_{\boldsymbol \zeta\bar{\boldsymbol \zeta}}.$$
	   We denote the hydrodynamic Green function in the annulus by 
	   $\mathcal G^*(\boldsymbol \zeta,\boldsymbol \nu;\bar{\boldsymbol \zeta},\bar{\boldsymbol \nu})=\mathcal G(\boldsymbol z,\boldsymbol w;\bar{\boldsymbol z},\bar{\boldsymbol w}),$
	   where $\boldsymbol \zeta,\boldsymbol\nu\in \mathcal D_{\boldsymbol \zeta}$ are the pre-images of $\boldsymbol z,\boldsymbol w\in\mathcal D_{\boldsymbol z}$ respectively. 
	   According to conditions (i)-(iii), this Green function $\mathcal G^*(\boldsymbol \zeta,\boldsymbol \nu;\bar{\boldsymbol \zeta},\bar{\boldsymbol \nu})$ will satisfy 
	   $$-4|\boldsymbol\zeta|^2\partial^2_{\boldsymbol z\bar{\boldsymbol z}}\mathcal G^*=\boldsymbol \delta_{\boldsymbol \nu}-\frac{1}{2\pi\log\rho},$$
	   $$\mathcal G^*(\rho^n\boldsymbol \zeta,\rho^m\boldsymbol \nu;\rho^n\bar{\boldsymbol \zeta},\rho^m\bar{\boldsymbol \nu})=\mathcal G^*(\boldsymbol \zeta,\boldsymbol \nu;\bar{\boldsymbol \zeta},\bar{\boldsymbol \nu}), \quad \forall n, m\in\mathbb Z,$$
	   and
	   $$\mathcal G^*(\boldsymbol \zeta,\boldsymbol \nu;\bar{\boldsymbol \zeta},\bar{\boldsymbol \nu})=\mathcal G^*(\boldsymbol \nu,\boldsymbol \zeta;\bar{\boldsymbol \nu},\bar{\boldsymbol \zeta}).$$
	   Using the $P$-function associated with the annulus (\ref{annular-domain}), $\mathcal G^*(\boldsymbol \zeta,\boldsymbol \nu;\bar{\boldsymbol \zeta},\bar{\boldsymbol \nu})$ is written down directly as follows~\cite{VS-flat-torus}.
	   \begin{equation}\label{2-3}
	   	\mathcal G^*(\boldsymbol \zeta,\boldsymbol \nu;\bar{\boldsymbol \zeta},\bar{\boldsymbol \nu})=-\frac{1}{2\pi}\log|P(\boldsymbol \zeta/\boldsymbol \nu,\sqrt\rho)|+\frac{1}{4\pi}\log|\boldsymbol \zeta/\boldsymbol \nu|-\frac{1}{4\pi\log\rho}(\log|\boldsymbol \zeta/\boldsymbol \nu|)^2.
	   \end{equation}
	   The basic properties of $P(\cdot,\sqrt\rho)$ and its log-derivative are collected in Appendix \ref{appA}, and one can verify
	   $$-4|\boldsymbol\zeta|^2\partial^2_{\boldsymbol \zeta\bar{\boldsymbol \zeta}}\left(-\frac{1}{4\pi\log\rho}(\log|\boldsymbol \zeta/\boldsymbol \nu|)^2\right)=\frac{1}{2\pi\log\rho}$$
	   by simple computation.
	   
	   By \eqref{2-2} and \eqref{2-3}, we have
	   \begin{equation*}
	   	\mathcal G(\boldsymbol z,\boldsymbol w;\bar{\boldsymbol z},\bar{\boldsymbol w})=-\frac{1}{2\pi}\log|P(e^{\mathbf i\boldsymbol z}/e^{\mathbf i\boldsymbol w},\sqrt\rho)|+\frac{1}{4\pi}|\boldsymbol z-\boldsymbol w|-\frac{1}{4\pi\log\rho}|\boldsymbol z-\boldsymbol w|^2.
	   \end{equation*}
	   Then, by identifying $\boldsymbol z=x_1+\mathbf i x_2, \boldsymbol w=y_1+\mathbf i y_2$ with $\boldsymbol x=(x_1,x_2), \boldsymbol y=(y_1,y_2)$ in the real coordinates, we show that the real Green function $G(\boldsymbol x,\boldsymbol y): \mathbb{T}^2\times \mathbb{T}^2\to \mathbb R$ satisfies
	   \begin{align*}
	   	G(\boldsymbol x,\boldsymbol y)&=\mathcal G(\boldsymbol z,\boldsymbol w;\bar{\boldsymbol z},\bar{\boldsymbol w})\\
	   	&=\mathcal G(x_1+\mathbf ix_2,y_1+\mathbf iy_2;x_1-\mathbf ix_2,y_1+\mathbf iy_2).
	   \end{align*}
	   Hence we can split $G(\boldsymbol x,\boldsymbol y)$ into three parts as
	   \begin{equation}\label{2-4}
	   	G(\boldsymbol x,\boldsymbol y)=\frac{1}{2\pi}\log \frac{1}{|\boldsymbol x-\boldsymbol y|}+H(\boldsymbol x,\boldsymbol y)-\frac{1}{4\pi\log\rho}|\boldsymbol x-\boldsymbol y|^2,
	   \end{equation}
	   where $H(\boldsymbol x,\boldsymbol y)\in C^\infty(\mathbb{T}^2\times \mathbb{T}^2)$, and $-\frac{|\boldsymbol x-\boldsymbol y|^2}{4\pi\log\rho}$ corresponds to the area term $1/|D|$ in \eqref{2-1}.
	   
	   \smallskip
	   
	   \subsection{Equilibrium states for point vortices in $\mathbb{T}^2$}
	   
	   If $v_1-\mathbf iv_2$ is the complex velocity for an incompressible flow in the $\boldsymbol z$-plane, then a stream function $\psi(\boldsymbol z,\bar{\boldsymbol z})$ exists and the velocity field $\boldsymbol v=v_1+\mathbf i v_2$ is defined via
	   \begin{equation}\label{2-5}
	   	\bar{\boldsymbol v}=v_1-\mathbf i v_2=2\mathbf i\frac{\partial \psi}{\partial \boldsymbol z}.
	   \end{equation}
	   The vorticity function $\omega(\boldsymbol z,\bar{\boldsymbol z})$ and $\psi(\boldsymbol z,\bar{\boldsymbol z})$ are related by
	   $$\omega=-\Delta\psi=-4\partial^2_{\boldsymbol z\bar{\boldsymbol z}}\psi,$$
	   and in the $\boldsymbol \zeta$-plane, \eqref{2-5} transforms into 
	   \begin{equation}\label{2-6}
	   	v_1-\mathbf i v_2=-2\boldsymbol\zeta\frac{\partial \psi}{\partial \boldsymbol \zeta}. 
	   \end{equation}

	   Since the Green function $\mathcal G(\boldsymbol z,\boldsymbol w;\bar{\boldsymbol z},\bar{\boldsymbol w})$ is actually the stream function for a point vortex located at $\boldsymbol w$ with the unit circulation, we can establish the dynamics of point vortex system in $\mathcal D_{\boldsymbol z}$ by summing the influence of all vortices together.
	   To eliminate the singular log-rotating term, we introduce the regular part of $\mathcal G$ as
	   \begin{equation}\label{2-7}
	   	\mathcal H(\boldsymbol z,\boldsymbol w;\bar{\boldsymbol z},\bar{\boldsymbol w})=\mathcal G(\boldsymbol z,\boldsymbol w;\bar{\boldsymbol z},\bar{\boldsymbol w})-\frac{1}{2\pi}\log\frac{1}{|\boldsymbol z-\boldsymbol w|}.
	   \end{equation}
	   Then for $N$ point vortices located at $\boldsymbol z_1,\cdots,\boldsymbol z_N$ with the circulations $\kappa_1,\cdots,\kappa_N$, by assuming that velocity field of the $j^{\mathrm{th}}$ vortex is $\boldsymbol v_j=v_{j1}+\mathbf i v_{j2}$, the equation of motion for the $j^{\mathrm{th}}$ vortex is given by
	   \begin{equation}\label{2-8}
	   	\frac{d \bar{\boldsymbol z}_m}{d t}=\bar{\boldsymbol v}_m=-2\boldsymbol\zeta\frac{\partial}{\partial\boldsymbol\zeta}\left[\sum_{n=1,n\neq m}^N\kappa_n\mathcal G^*(\boldsymbol \zeta,\boldsymbol \nu_n;\bar{\boldsymbol \zeta},\bar{\boldsymbol \nu}_n)+\kappa_m\mathcal H^*(\boldsymbol \zeta,\boldsymbol \nu_m;\bar{\boldsymbol \zeta},\bar{\boldsymbol \nu}_m)\right]\bigg|_{\boldsymbol \zeta=\boldsymbol \nu_m}
	   \end{equation}
	   with $\boldsymbol \nu_n$ being the pre-image of $\boldsymbol z_n$, and $\mathcal H^*(\boldsymbol \zeta,\boldsymbol \nu;\bar{\boldsymbol \zeta},\bar{\boldsymbol \nu})=\mathcal H(\boldsymbol z,\boldsymbol w;\bar{\boldsymbol z},\bar{\boldsymbol w})$ the regular part of the Green function in the $\boldsymbol \zeta$-plane.
	   Actually, \eqref{2-8} means that a point vortex system in $\mathcal D_{\boldsymbol z}$ admits a Hamiltonian structure 
	   $$\kappa_m\frac{d \bar{\boldsymbol z}_m}{d t}=-2\mathbf i\frac{\partial\mathcal W}{\partial \boldsymbol z_m},$$
	   where the Hamiltonian $\mathcal W^*_N(\boldsymbol z_1,\cdots, \boldsymbol z_N,\bar{\boldsymbol z}_1,\cdots, \bar{\boldsymbol z}_N)$ is given by
	   \begin{equation*}
	   	\mathcal W^*_N(\boldsymbol z_1,\cdots, \boldsymbol z_N,\bar{\boldsymbol z}_1,\cdots, \bar{\boldsymbol z}_N)=\sum_{n,m=1,n<m}^N\kappa_m\kappa_n\mathcal G^*(\boldsymbol z_m,\boldsymbol z_n;\bar{\boldsymbol z}_m,\bar{\boldsymbol z}_n)+\frac{1}{2}\sum_{m=1}^N\kappa_m^2\mathcal H^*(\boldsymbol z_m,\boldsymbol z_m;\bar{\boldsymbol z}_m,\bar{\boldsymbol z}_m).
	   \end{equation*}
	   This is also known as the Kirchhoff-Routh path function. See \cite{Lin}.
	   
	   In \eqref{2-7}, the regular function of one-variable $\mathcal H^*(\boldsymbol \zeta,\boldsymbol \zeta;\bar{\boldsymbol \zeta},\bar{\boldsymbol \zeta})$ is called the Robin function. Since 
	   $$\log|\boldsymbol z-\boldsymbol w|=\log|\boldsymbol z'(\boldsymbol\nu)(\boldsymbol\zeta-\boldsymbol\nu)|+O(|\boldsymbol\zeta-\boldsymbol\nu|)=\ln|\boldsymbol\zeta/\boldsymbol w-1|+O(|\boldsymbol\zeta-\boldsymbol\nu|),$$
	   by \eqref{2-3} together with the definition of the $P$-function \eqref{A-1} in Appendix \ref{appA}, we have
	   \begin{align*}
	   	\mathcal H^*(\boldsymbol \zeta,\boldsymbol \nu;\bar{\boldsymbol \zeta},\bar{\boldsymbol \nu})&=-\frac{1}{2\pi}\log\left|\prod_{n=1}^\infty(1-\rho^n\boldsymbol \zeta/\boldsymbol\nu)(1-\rho^n\boldsymbol \nu/\boldsymbol\zeta)\right|+\frac{1}{4\pi}\log|\boldsymbol\zeta/\boldsymbol\nu|\\
	   	&\quad-\frac{1}{4\pi\rho}(\log|\boldsymbol\zeta/\boldsymbol\nu|)^2+O(|\boldsymbol \zeta-\boldsymbol \nu|).
	   \end{align*}
	   Thus
	   $$\mathcal H^*(\boldsymbol \zeta,\boldsymbol \zeta;\bar{\boldsymbol \zeta},\bar{\boldsymbol \zeta})=-\frac{1}{2\pi}\left|\prod_{n=1}^\infty(1-\rho^k)^2\right|$$
	   is a constant. 
	   Consequently, from \eqref{2-8} we obtain
	   \begin{equation}\label{2-9}
	   	\begin{split}
	   		\frac{d \bar{\boldsymbol z}_m}{d t}=\bar{\boldsymbol v}_m&=-2\sum_{n=1,n\neq m}^N\kappa_n\boldsymbol\nu_m\frac{\partial}{\partial\boldsymbol\nu_m}\mathcal G^*(\boldsymbol \nu_m,\boldsymbol \nu_n;\bar{\boldsymbol \nu}_m,\bar{\boldsymbol \nu}_n)\\
	   		&=\frac{1}{2\pi}\sum_{n=1,n\neq m}^N\kappa_n K(\boldsymbol \nu_m/\boldsymbol \nu_n,\sqrt\rho)-\frac{1}{4\pi}\sum_{n=1,n\neq m}^N\kappa_n\\
	   		&\quad+\frac{1}{2\pi\rho}\sum_{n=1,n\neq m}^N\kappa_n\ln|\boldsymbol\nu_m/\boldsymbol\nu_n|, \quad \mathrm{for} \ m=1,\cdots,N,
	   	\end{split}
	   \end{equation}
	   where the $K$-function $K(\cdot, \sqrt\rho)$ is the logarithmic derivative of the $P$-function $P(\cdot, \sqrt\rho)$.
	   
	   Thanks to \eqref{2-9}, for $\boldsymbol x_n=(x_{n1},x_{n2})\in \mathbb{T}^2$, $\kappa_n\in\mathbb R$, $n=1,\cdots, N$, now we can state a sufficient and necessary condition for point vortex system 
	   \begin{equation*}
	   	\omega^*(\boldsymbol x)=\sum_{n=1}^N\kappa_n\boldsymbol \delta_{\boldsymbol x_n}-\sum_{n=1}^N\kappa_n
	   \end{equation*}
	   constituting an equilibrium state to \eqref{1-1}: For $m=1,\cdots, N$, if the location coordinates 
	   $$(\boldsymbol \nu_1,\cdots,\boldsymbol \nu_N)=(e^{\mathbf i(x_{11}+x_{12}\mathbf i)},\cdots e^{\mathbf i (x_{11}+x_{12}\mathbf i)})$$
	   and the circulation parameters $(\kappa_1,\cdots,\kappa_N)$ satisfy $N$ complex equations
	   \begin{equation}\label{2-10}
	   	\begin{split}
	   		0&=f_m(\boldsymbol \nu_1,\cdots,\boldsymbol \nu_N,\kappa_1,\cdots,\kappa_N)\\
	   		&=\frac{1}{2\pi}\sum_{n=1,n\neq m}^N\kappa_n K(\boldsymbol \nu_m/\boldsymbol \nu_n,\sqrt\rho)-\frac{1}{4\pi}\sum_{n=1,n\neq m}^N\kappa_n+\frac{1}{2\pi\rho}\sum_{n=1,n\neq m}^N\kappa_n\ln|\boldsymbol\nu_m/\boldsymbol\nu_n|,
	   	\end{split}
	   \end{equation}  
	   then $\omega^*$ is a steady solution to \eqref{1-1}. This condition is also equivalent to that $\boldsymbol x_n=(x_{n1},x_{n2})\in \mathbb{T}^2$ is a critical point of the Kirchhoff-Routh function $\mathcal W_N(\boldsymbol x)$ defined \eqref{1-4}.
	   
	   As a simplest example, we can verify an equilibrium state for point vortex solution to \eqref{1-1}. Consider $N$ vortices of identical strength $+1$ arranged on a polygon in $\mathcal D_{\boldsymbol \zeta}$, whose coordinates are given by the complex numbers:
	   $$\boldsymbol \nu_n=r e^{2\pi\mathbf i(n-1)/N}.$$
	   Using the properties \eqref{A-2}, \eqref{A-3}, and \eqref{A-4} in Appendix \ref{appA} for the $K$-function, if $N=2M$ with $M\in \mathbb N^*$, we can calculate to obtain
	   \begin{align}\label{2-11}
	   	&\sum_{n=2}^M\left[K(r/\boldsymbol \nu_n,\sqrt\rho)+K(r/\boldsymbol \nu_n,\sqrt\rho)\right]+K(-1,\sqrt\rho)-\frac{2M-1}{2}\\
	   	&=(M-1)+\frac{1}{2}-\frac{2M-1}{2}=0,\nonumber
	   \end{align}
	   and if $N=2M-1$ is odd, it will hold
	   \begin{align}\label{2-12}
	   	&\sum_{n=2}^{M+1}\left[K(r/\boldsymbol \nu_n,\sqrt\rho)+K(r/\boldsymbol \nu_n,\sqrt\rho)\right]-(M-1)\\
	   	&=(M-1)-(M-1)=0.\nonumber
	   \end{align}
	   The last step is to pull back the coordinates $\boldsymbol\nu_n$ from $\mathcal D_{\boldsymbol \zeta}$ to $\boldsymbol P_n$ in $D\subset \mathbb R^2$. Let $0<d<2\pi/N$, $0<h<-\ln\rho$ be two constants, and 
	   $$\boldsymbol P_k=\left(d+\frac{2\pi n}{N}, h\right), \quad n=0,1,\cdots,N-1,$$
	   be $N$ points located at the row $x_2=h$. Since 
	   $$-\mathbf i\log\boldsymbol \nu_n=\frac{(n-1)2\pi}{N}-\mathbf i\log r,$$ 
	   by \eqref{2-10}, \eqref{2-11} and doubly-periodicity of $\mathbb{T}^2$, we see that the point vortex system
	   $$\omega^*(\boldsymbol x)=\sum_{n=0}^{N-1}\boldsymbol \delta_{\boldsymbol P_k}-N$$ 
	   indeed constitutes an equilibrium to \eqref{1-1}.

	   \smallskip
	   
	   \subsection{Contour dynamics for single-layered patches}
	   The existence of such point vortex equilibrium inspires us to construct a series of steady patch solutions of \eqref{1-1}, which takes the form 
	   \begin{equation}\label{2-13}
	   	\omega(\boldsymbol x)=\frac{1}{\ep^2}\sum_{n=0}^{N-1}\boldsymbol 1_{\Omega_n^\ep}-\gamma
	   \end{equation}
	   with $\ep>0$ a small bifurcation parameter, $\boldsymbol 1_\Omega$ the characteristic function of domain $\Omega$, 
	   $$\Omega_n^\ep=\Omega_0^\ep+\frac{2\pi n}{N}\boldsymbol e_1, \quad n=0,1,\cdots,N-1$$ 
	   be $N$ small areas as perturbation of $B_\ep(\boldsymbol P_k)$, and $\gamma\in \mathbb{R}$ the strength of background vorticity around $N\pi$. 
	   
	   Using Biot-Savart law and Green-Stokes formula, the velocity of fluid can be recovered by
	   \begin{equation}\label{2-14}
	   	\boldsymbol{v}(\boldsymbol x)=\frac{1}{2\pi}\sum_{n=0}^{N-1}\int_{\partial \Omega_n^\ep}\left[\log\left(\frac{1}{|\boldsymbol x-\boldsymbol y|}\right)+H(\boldsymbol x,\boldsymbol y)\right]d\boldsymbol y+\frac{\gamma}{N}\sum_{n=0}^{N-1}\left(\boldsymbol x-\frac{2\pi n}{N}\boldsymbol e_1\right)^\perp.
	   \end{equation}
	   On the other hand, since $\omega(\boldsymbol x)$ has a patch structure, the conservation of momentum in Euler equation \eqref{1-1} can be transformed to 
	   \begin{equation}\label{2-15}
	   	\boldsymbol{v}(\boldsymbol x)\cdot \mathbf n(\boldsymbol x)=0, \ \ \ \forall \, \boldsymbol x \in \cup_{n=0}^{N-1}\partial \Omega^\varepsilon_n,
	   \end{equation}
	   where $\mathbf n(\boldsymbol x)$ is the normal vector to the patch boundary. We can assume that $\partial \Omega_0^\ep$ is parameterized by
	   \begin{equation*}
	   	(d,h)+\boldsymbol x(s) = (d+\varepsilon R(s)\cos(s), h+\varepsilon R(s)\sin(s)),
	   \end{equation*}
	   and hence
	   $$\mathbf n(\boldsymbol s)=(\ep R'(s)\sin(s)+\ep R(s)\cos(s),-\ep R'(s)\cos(s)+\ep R(s)\sin(s)).$$
	   If we combine \eqref{2-14}, \eqref{2-15}, and let
	   $$ \int\!\!\!\!\!\!\!\!\!\; {}-{} g(t)dt:=\frac{1}{2\pi}\int_0^{2\pi}g(t)dt$$
	   for simplicity, we can derive the contour dynamic equation for parametrization function $R(s)$ of $\Omega_0^\ep$  as follows
	   \begin{align}\label{2-16}
	   	0&= F(\varepsilon, \gamma, R(s))\\
	   	&=\frac{1}{2\ep R(s)}\int\!\!\!\!\!\!\!\!\!\; {}-{} \log\left(\frac{1}{ \left(R(s)-R(t)\right)^2+4R(s)R(t)\sin^2\left(\frac{s-t}{2}\right)}\right)\nonumber\\
	   	&\quad\times \left[(R(s)R(t)+R'(s)R'(t))\sin(s-t)+(R(s)R'(t)-R'(s)R(t))\cos(s-t)\right] dt\nonumber\\
	   	&+ \sum_{n=1}^{N-1} \frac{1}{ \ep R(s)} \int\!\!\!\!\!\!\!\!\!\; {}-{}\log\left( \frac{1}{\left| \boldsymbol x(s)-\boldsymbol x(t)-2\pi n\boldsymbol e_1/N\right|}\right)\left[(R(s)R(t)+R'(s)R'(t))\sin(s-t)\right.\nonumber\\
	   	&\quad \left. +(R(s)R'(t)-R'(s)R(t))\cos(s-t)\right] dt\nonumber\\
	   	&+\sum_{n=0}^{N-1} \frac{2\pi}{\ep R(s)} \int\!\!\!\!\!\!\!\!\!\; {}-{} H(\boldsymbol P_0+\boldsymbol x(s), \boldsymbol P_0+\boldsymbol x(t)+2\pi n\boldsymbol e_1/N)\left[(R(s)R(t)+R'(s)R'(t))\sin(s-t)\right.\nonumber\\
	   	&\quad \left. +(R(s)R'(t)-R'(s)R(t))\cos(s-t)\right] dt\nonumber\\
	   	&+\gamma\varepsilon R'(s)-\frac{(N-1)\gamma\pi}{N}\left(\frac{R'(s)\cos(s)}{R(s)}-\sin(s)\right)\nonumber\\
	   	&=F_1+F_2+F_3+F_4,\nonumber\\
	   \end{align}
	   where $F_1$ is induced by $\Omega_0^\ep$ itself, $F_2$ is the influence of the other patches, the regular part in the Green function $G$ contributes $F_3$, and $F_4$ is generated by the background vorticity.

	   \bigskip
		
	   \section{The functional analysis frame for single-layered patches}\label{sec3}
	   
	   Since $\Omega_n^\ep$ is a perturbation of the disk $B_\ep(\boldsymbol P_n)$, we assume that the boundary is radially parametrized by
	   $$R(s)=1+\ep u(s).$$
	   We use the following function spaces for $u:\mathbb S^1\to \mathbb R$ in the construction.
	   \begin{equation*}
	   	X^k=\left\{ u\in H^k(\mathbb S^1), \ f(s)= \sum\limits_{j=1}^{\infty}a_j\cos(js)\right\},
	   \end{equation*}
	   
	   \begin{equation*}
	   	Y^{k}=\left\{ u\in H^{k}(\mathbb S^1), \ u(s)= \sum\limits_{j=1}^{\infty}a_j\sin(js)\right\}.
	   \end{equation*}
	   The norms $\|\cdot\|_{X^k}$, $\|\cdot\|_{Y^k}$  in $X^k$ or $Y^{k}$ is the Sobolev $H^k$-norm on $\mathbb S^1:[0,2\pi)$ given by
	   \begin{equation*}
	   	\|u\|_{H^k(\mathbb S^1)}=\left(\sum_{j=1}^{\infty}|a_j|^2|j|^{2k}\right)^{\frac{1}{2}}.
	   \end{equation*}
	   For further usage of implicit function theorem in Section \ref{sec4}, we also denote
	   \begin{equation*}
	   	Y^{k}_0=\left\{ u\in H^{k}(\mathbb S^1), \ u(s)= \sum\limits_{j=2}^{\infty}a_j\sin(js)\right\}
	   \end{equation*}
	   as a subspace of $Y^k$ with $a_1=0$. Throughout this paper, we always assume that $k\geq 3$ and $\ep_0>0$ is a small constant.
	   
	   \subsection{Regularity of the contour dynamic equation}
	   
	   We will begin with the continuity of $F(\varepsilon, \gamma, R(x))$.
	   \begin{lemma}\label{lem3-1}
	   	Set $V:=\{ R(s)=1+\ep u(s)\mid u\in X^k,\|u\|_{X^k}<1\}$, then $F(\varepsilon, \gamma, R(s)): \left(-\varepsilon_0, \varepsilon_0\right)\times \mathbb{R} \times V \rightarrow Y^{k-1}$ is continuous.
	   \end{lemma}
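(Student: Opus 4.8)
The plan is to decompose $F = F_1 + F_2 + F_3 + F_4$ exactly as in \eqref{2-16} and establish continuity of each piece separately as a map $(-\ep_0,\ep_0)\times\mathbb R\times V\to Y^{k-1}$, then add. The term $F_4 = \gamma\ep R'(s) - \frac{(N-1)\gamma\pi}{N}\bigl(\frac{R'(s)\cos s}{R(s)}-\sin s\bigr)$ is the easiest: with $R(s)=1+\ep u(s)$ and $\|u\|_{X^k}<1$, we have $R(s)$ bounded away from zero, so $1/R(s)$ is a smooth function of $R$, and multiplication of $H^{k-1}$ functions by $H^k$ functions is continuous (since $k\ge 3$ makes $H^k(\mathbb S^1)$ a Banach algebra and $H^k\hookrightarrow C^1$). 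Joint continuity in $(\ep,\gamma,u)$ follows because $F_4$ is a polynomial in $\gamma$, $\ep$ and linear/rational in $u,u'$. One must check the image lands in $Y^{k-1}$, i.e. the output is an odd series in $s$; this is where the parity structure of $X^k$ (cosine series, so $u$ even, $u'$ odd) is used, together with the explicit $\sin s$, $\cos s$ factors — $R'$ is odd, $\cos s$ even, $\sin s$ odd, and the combinations work out to be odd functions. The same parity bookkeeping must be verified for $F_1,F_2,F_3$.

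For $F_3$, the kernel $H(\boldsymbol P_0+\boldsymbol x(s),\boldsymbol P_0+\boldsymbol x(t)+2\pi n\boldsymbol e_1/N)$ is smooth in both arguments by \eqref{2-4} (recall $H\in C^\infty(\mathbb T^2\times\mathbb T^2)$), and $\boldsymbol x(s)=(\ep R(s)\cos s,\ep R(s)\sin s)$ depends smoothly on $s$ and continuously on $(\ep,u)\in(-\ep_0,\ep_0)\times V$ in the $H^k$ topology. Hence $F_3$ is an integral operator with a jointly smooth kernel composed with continuous maps, multiplied by the bracketed bilinear expression in $R,R',R(t),R'(t)$ and by $1/(\ep R(s))$ — wait, the $1/\ep$ is absorbed because the bracket is $O(\ep^0)$ while actually one should note $F_3$ as written already has the prefactor $\frac{2\pi}{\ep R(s)}$ and the bracket contributes no compensating $\ep$, so one must check that $H$ evaluated at nearby points produces the needed cancellation — more precisely, $H$ being $C^\infty$ and the two arguments differing by $O(\ep)$ when $n=0$, a Taylz expansion gives the $\sin(s-t)$-type integrals a gain; but for $n\ne 0$ the two points are separated by a fixed distance, and one instead uses that $\int\!\!\!-\, [\cdots]\,dt$ of the bracket against a smooth-in-$t$ function is itself smooth in $s$, with the $1/\ep$ handled by noting the background-vorticity normalization. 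In any case $F_3\in C^\infty(\mathbb S^1)$ for each fixed data, and continuity in $(\ep,\gamma,u)$ reduces to continuity of composition and multiplication in Sobolev algebras; the $\gamma$-dependence is trivial since $\gamma$ does not appear in $F_3$.

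The genuinely delicate piece is $F_1$ (and $F_2$, which is similar but with the non-singular kernel $\log\frac{1}{|\boldsymbol x(s)-\boldsymbol x(t)-2\pi n\boldsymbol e_1/N|}$, $n\ne 0$, so away from the diagonal and hence easier). For $F_1$ the kernel $\log\frac{1}{(R(s)-R(t))^2+4R(s)R(t)\sin^2(\frac{s-t}{2})}$ has a logarithmic singularity on the diagonal $s=t$. The standard approach, following Hmidi et al.\ \cite{Hmi} and the treatment in \cite{Cao1}, is to split the logarithm as
\begin{equation*}
\log\frac{1}{(R(s)-R(t))^2+4R(s)R(t)\sin^2(\frac{s-t}{2})} = \log\frac{1}{4\sin^2(\frac{s-t}{2})} + \log\frac{4\sin^2(\frac{s-t}{2})}{(R(s)-R(t))^2+4R(s)R(t)\sin^2(\frac{s-t}{2})},
\end{equation*}
where the second summand extends to a function that is $C^\infty$ on the diagonal (using $R(s)-R(t)=O(|s-t|)$ with the $H^k\hookrightarrow C^1$ bound and $R$ bounded below), depending continuously on $u$, while the first summand is a fixed convolution-type logarithmic kernel. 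The bracket $(R(s)R(t)+R'(s)R'(t))\sin(s-t)+(R(s)R'(t)-R'(s)R(t))\cos(s-t)$ vanishes to first order as $t\to s$ (the $\sin(s-t)$ term obviously, and the $\cos(s-t)$ coefficient $R(s)R'(t)-R'(s)R(t)\to R(s)R'(s)-R'(s)R(s)=0$), which tames the singularity so that the convolution with $\log\frac{1}{|\sin|}$ maps $H^{k-1}$ to $H^{k-1}$ boundedly — in Fourier terms, convolution with $\log\frac{1}{4\sin^2(s/2)} = \sum_{j\ge 1}\frac{2}{j}\cos(js)$ gains one derivative, exactly compensating the derivative lost in the first-order vanishing factor. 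The main obstacle, then, is this $F_1$ analysis: proving that the singular integral operator, viewed as a nonlinear map of $u$, is continuous into $H^{k-1}$ with the correct parity. I expect this to go through by the commutator/splitting estimates standard in the contour-dynamics literature, combined with the Banach-algebra property of $H^k(\mathbb S^1)$ for $k\ge 3$ and the smooth dependence $u\mapsto (R,R')$, $u\mapsto 1/R$; the $1/(\ep R(s))$ prefactor is harmless once one observes (as in the derivation of \eqref{2-16}) that the singular integral carries a compensating factor of $\ep$ from $\boldsymbol x(s)=\ep R(s)(\cos s,\sin s)$ inside the Biot–Savart kernel, so no negative power of $\ep$ survives and the limit $\ep\to 0$ is regular.
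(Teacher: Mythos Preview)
Your decomposition and the parity bookkeeping are fine, and your log-splitting approach to the diagonal singularity in $F_1$ is a legitimate alternative to the paper's integral Taylor formula $\log\frac{1}{A+B}=\log\frac{1}{A}-\int_0^1\frac{B}{A+\tau B}\,d\tau$. Both tame the $\log|\sin|$ singularity; the paper's version has the advantage of simultaneously producing the explicit $\ep$-expansion needed below.

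The genuine gap is your treatment of the $1/\ep$ prefactor. Your claim that ``the singular integral carries a compensating factor of $\ep$ from $\boldsymbol x(s)=\ep R(s)(\cos s,\sin s)$ inside the Biot--Savart kernel'' is incorrect: by the time \eqref{2-16} is written down, the $\ep$ inside the log kernel has already been extracted (it became a constant $\log\ep$ that integrates to zero against the bracket), and the kernel $\log\bigl(1/[(R(s)-R(t))^2+4R(s)R(t)\sin^2\tfrac{s-t}{2}]\bigr)$ contains no further $\ep$ beyond $R=1+\ep u$. The surviving $1/\ep$ is compensated by a different mechanism: at $\ep=0$ the bracket reduces to $\sin(s-t)$ and the kernel to $\log(1/(4\sin^2\tfrac{s-t}{2}))$, whose product is odd in $s-t$ and hence integrates to zero. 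One must then show the integral vanishes to order $\ep$, which the paper does by expanding with the Taylor formula and identifying the $O(1)$ limit explicitly. Your log-splitting does not by itself yield this; you would need to supplement it with a separate first-order expansion in $\ep$.

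The same issue recurs in $F_2$ and $F_3$, where you visibly stall (``wait, the $1/\ep$ is absorbed because\ldots''). For those terms the kernel is smooth and the integral against $\sin(s-t)$ again vanishes at $\ep=0$ for the same parity reason; a one-step Taylor expansion of the smooth kernel in its $\ep$-dependent arguments then gives the finite limit (this is how the paper obtains the $\partial_{11}H(\boldsymbol P_0,\boldsymbol P_n)\sin s$ terms in \eqref{4-1}). Your appeal to ``background-vorticity normalization'' is not the mechanism at work here.
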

	   
	   \begin{proof}
	   	Notice that $u(s)\in X^k$ is a even function, and $u'(s)$ is odd. It is easy to verify that $F_4$ is odd. By changing $t$ to $-t$ in the integrals of $F_1,F_2,F_3$, we can deduce that $F_1,F_2,F_3$ are odd too, and hence $F(\varepsilon, \gamma, R(s))$ is odd on $s$.
	   	
	   	To verify the regularity of $F(\varepsilon, \gamma, R(s))$, it is sufficient to consider the most singular term $F_1$. 
	   	Since $R(s)=1+\ep u(s)$, the possible singularity for $\ep=0$ may occur when we take the $\ep \rightarrow 0$ limit of $F_1$. To avoid the singularity, we will use the Taylor's formula
	   	\begin{equation*}
	   		\log\left(\frac{1}{A+B}\right)=\log\left(\frac{1}{A}\right)-\int_0^1\frac{B}{A+\tau B}d\tau,
	   	\end{equation*}
	   	where we let
	   	$$A:=4\sin^2\left(\frac{s-t}{2}\right),$$
	   	and 
	   	$$B:=\ep(u(s)-u(t))^2+\sin^2\left(\frac{s-t}{2}\right)(4u(s)+4u(t)+\ep u(s)u(t)).$$
	   	Then by using the fact that the sine function is odd, we have
	   	\begin{align*}
	   		F_1&(\ep,R(s))=\frac{1}{2\ep}\int\!\!\!\!\!\!\!\!\!\; {}-{} \log\left(\frac{1}{A+\ep B+O(\ep^2)}\right)\\
	   		&\quad \times\left[\big(1+\ep u (s)+\ep u(t)+O(\ep^2)\big)\sin(s-t)+\big(\ep u'(t)-\ep u'(s)+O(\ep^2)\big)\cos(s-t))\right]dt\\
	   		&=\frac{1}{2\ep}\int\!\!\!\!\!\!\!\!\!\; {}-{} \log\left(\frac{1}{A}\right)\sin(s-t)dt-\frac{1}{2}\int_0^1\int\!\!\!\!\!\!\!\!\!\; {}-{} \frac{B\sin(s-t)}{A+\tau \ep B+O(\ep^2)}dtd\tau\\
	   		&\quad+\frac{u(s)}{2}\int\!\!\!\!\!\!\!\!\!\; {}-{} \log\left(\frac{1}{A}\right)\sin(s-t)dt \\
	   		&\quad+\frac{1}{2}\int\!\!\!\!\!\!\!\!\!\; {}-{} \log\left(\frac{1}{A}\right)\left(u(t)\sin(s-t)+(u'(t)-u'(s))\cos(s-t)\right)dt+O(\ep)\\
	   		&=-\frac{1}{2}\int\!\!\!\!\!\!\!\!\!\; {}-{}\big(u(s)+u(t)\big)\sin(s-t)dt \\
	   		&\quad+\frac{1}{2}\int\!\!\!\!\!\!\!\!\!\; {}-{} \log\left(\frac{1}{4\sin^2\left( \frac{s-t}{2}\right) }\right)\left(u(t)\sin(s-t)+(u'(t)-u'(s))\cos(s-t)\right)dt+O(\ep)\\
	   		&=\frac{1}{2}\int\!\!\!\!\!\!\!\!\!\; {}-{} \log\left(\frac{1}{4\sin^2\left(\frac{t}{2}\right)}\right)\left(u(s-t)\sin(t)+(u'(s-t)-u'(s))\cos(t)\right)dt\\
	   		&\quad -\frac{1}{2}\int\!\!\!\!\!\!\!\!\!\; {}-{}u(t)\sin(s-t)dt+\ep\mathcal R_1(\ep, R(s)),
	   	\end{align*}  
	   	where we use
	   	$$\frac{B}{A}=u(s)+u(t)+O(\ep)$$
	   	for the second term in the third line, and $\mathcal R_1(\ep, R(s))$ is not singular with respect to $\ep$. Hence we see that $F_1\in L^2$ for all $(\varepsilon,R(s))\in \left(-\frac{1}{2}, \frac{1}{2}\right)\times V$.
	   	
	   	In the next step, we will show that $\partial^{k-1}F_1=\partial_x^{k-1}F_1(\varepsilon, R(s))\in L^2$.
	   	By splitting the term $R(s)R'(t)-R'(s)R(t)$ into $R(s)(R'(t)-R'(s))+R'(s)(R(s)-R(t))$ and taking the $(k-1)^{\mathrm{th}}$ derivatives of $F_1(\varepsilon, R(x))$, we see that $\partial^{k-1}F_1$ is equivalent to
	   	\begin{align*}
	   		&-\frac{\partial^{k-1}u(s)}{R(x)} \varepsilon F_1\\
	   		&+\frac{1}{2}\int\!\!\!\!\!\!\!\!\!\; {}-{}\log\left(\frac{1}{ \left(R(s)-R(t)\right)^2+4R(s)R(t)\sin^2\left(\frac{s-t}{2}\right)}\right)\\
	   		&\qquad\times \left((\partial^k u(s)  u'(t)+u'(s)\partial^k u(t))+(\partial^k u(t)-\partial^k u(s))\cos(s-t)\right)dt\\	
	   		& -\frac{1}{2}\int\!\!\!\!\!\!\!\!\!\; {}-{}\frac{(u(s)-u(t))(\partial^{k-1}u(s)-\partial^{k-1}f(y))+2(u(s)\partial^{k-1}u(t)+\partial^{k-1}u(s)u(t))\sin^2(\frac{s-t}{2})}{\left(R(s)-R(t)\right)^2+4R(s)R(t)\sin^2\left(\frac{s-t}{2}\right)}\\
	   		& \quad\times \left[(R(s)R(t)+R'(s)R'(t))\sin(s-t)+(R(s)R'(t)-R'(s)R(t))\cos(s-t)\right] dt\nonumber\\
	   		& -\frac{1}{2}\int\!\!\!\!\!\!\!\!\!\; {}-{}\frac{(u(s)-u(t))(u'(s)-u'(t))+2(u(s)u'(t)+u'(s)u(t))\sin^2(\frac{s-t}{2})}{\left(R(s)-R(t)\right)^2+4R(s)R(t)\sin^2\left(\frac{s-t}{2}\right)}\\
	   		& \quad\times \left[\varepsilon(\partial^{k-1}u(s)u'(t)+u'(s)\partial^{k-1}u(t))\sin(s-t)\right.\\
	   		&\qquad\left.+(R(s)\partial^{k-1}u(t)-\partial^{k-1}u(s)R(t))\cos(s-t)\right] dt.\nonumber
	   	\end{align*}
	   	Since $u\in H^k$ and $k\ge3$, we have $\|\partial^lu\|_{L^\infty}<C \|u\|_{H^k}$ for $l=0,1,2$. By  the mean value theorem and H\"older's inequality, we have $\partial^{k-1} F_1\in L^2$, which implies $F_1\in H^{k-1}$.  
	   	
	   	Then we need to prove that $F(\varepsilon, \gamma,R(s))$ is continuous in the norm of $Y^{k-1}$. We deal with the most singular term in $F_1$ as
	   	\begin{align*}
	   		F_1^*&=\frac{1}{2}\int\!\!\!\!\!\!\!\!\!\; {}-{} \log\left(\frac{1}{ \left(R(s)-R(t)\right)^2+4R(s)R(t)\sin^2\left(\frac{s-t}{2}\right)}\right)(u'(t)-u'(s))\cos(s-t) dt\\
	   		&=\frac{1}{2}\int\!\!\!\!\!\!\!\!\!\; {}-{} \log\left(\frac{1}{ \varepsilon^2\left(u(s)-u(t)\right)^2+4(1+\varepsilon u(s))(1+u(t))\sin^2\left(\frac{s-t}{2}\right)}\right)\\
	   		&\quad \times(u'(t)-u'(s))\cos(s-t) dt.
	   	\end{align*}
	   	The following notations will be used for convenience: for a general function $h$, denote
	   	\begin{equation}\label{3-1}
	   		\Delta g=g(s)-g(t), \ \ \ g=g(s), \ \ \ \tilde {g}=g(t),
	   	\end{equation}
	   	and
	   	\begin{equation}\label{3-2}
	   		D(g)=\varepsilon^{2}(\Delta g)^2+4(1+\varepsilon g)(1+\varepsilon\tilde {g})\sin^2\left(\frac{s-t}{2}\right).
	   	\end{equation}
	   	Then for $u_1, u_2\in V^k$, it holds
	   	\begin{align*}
	   		&\quad F_1^*(\varepsilon, u_1)-F_1^*(\varepsilon, u_2)\\
	   		&=-\frac{1}{2}\int\!\!\!\!\!\!\!\!\!\; {}-{} \log\left(\frac{1}{ D(u_1)}\right)\Delta u_1'\cos(s-t) dt+\frac{1}{2}\int\!\!\!\!\!\!\!\!\!\; {}-{} \log\left(\frac{1}{ D(u_2)}\right)\Delta u_2'\cos(s-t) dt\\
	   		&=-\frac{1}{2}\int\!\!\!\!\!\!\!\!\!\; {}-{} \log\left(\frac{1}{ D(u_1)}\right)(\Delta u_1'-\Delta u_2')\cos(s-t) dt\\
	   		&\quad+\frac{1}{2}\int\!\!\!\!\!\!\!\!\!\; {}-{} \left(\log\left(\frac{1}{ D(u_2)}\right)-\log\left(\frac{1}{ D(u_1)}\right)\right)\Delta u_2'\cos(s-t) dt\\
	   		&=I_1+I_2.
	   	\end{align*}
	   	It is easy to obtain that $\|I_1\|_{H^{k-1}}\le C\|u_1-u_2\|_{H^k}$. To deduce $\|I_2\|_{H^{k-1}}\le C\| u_1-u_2\|_{H^k}$, it is enough to prove $$\left|\left|\partial^{l} \left(\log\left(\frac{1}{ D( u_2)}\right)-\log\left(\frac{1}{ D(u_1)}\right)\right)\cdot|s-t|\right|\right|_{L^2}\leq C\| u_1- u_2\|_{H^k},$$
	   	for $l=0,\ldots, k-1$, which can be verified by direct calculations. Thus we have completed the proof for continuity.
	   \end{proof}
	   
	   For $(\ep,\gamma,R(s))\in (-\ep_0,\ep_0)\times \mathbb R\times V$ and $v(x)\in X^k$, let
	   \begin{equation*}
	   	\partial_uF(\ep,\gamma,R(s))v:=\lim_{\tau\to 0}\frac{1}{\tau}\big[F(\ep,\gamma,1+\ep(u+\tau v))-F(\ep,\gamma,1+\ep u)\big]
	   \end{equation*}
	   be the Gateaux derivative of $F(\ep,\gamma,R(s))$. By splitting
	   \begin{equation*}
	   	\left| \boldsymbol x(s)-\boldsymbol x(t)-2\pi n\boldsymbol e_1/N\right|^2=\left(\frac{2\pi n}{N}\right)^2+\ep A_n
	   \end{equation*}	
	   with
	   \begin{equation*}
	   	A_n=-\frac{4\pi n}{N}(R(x)\cos(s)-R(t)\cos(t))+\ep(R^2(s)+R(t)^2-2R(s)R(t)\cos(s-t)),
	   \end{equation*}
	   we can compute carefully to obtain
	   \begin{align}\label{3-3}
	   	&\quad \partial_uF (\varepsilon, \gamma, R(s)) v\\
	   	&=-\frac{ v(s)}{2R(s)^2}\int\!\!\!\!\!\!\!\!\!\; {}-{} \log\left(\frac{1}{ \left(R(s)-R(t)\right)^2+4R(s)R(t)\sin^2\left(\frac{s-t}{2}\right)}\right)\nonumber\\
	   	&\quad\times \left[(R(s)R(t)+R'(s)R'(t))\sin(s-t)+(R(s)R'(t)-R'(s)R(t))\cos(s-t)\right] dt\nonumber\\
	   	&+\frac{1}{2R(s)}\int\!\!\!\!\!\!\!\!\!\; {}-{}\log\left(\frac{1}{ \left(R(s)-R(t)\right)^2+4R(s)R(t)\sin^2\left(\frac{s-t}{2}\right)}\right)\nonumber\\
	   	&\quad\times \left[(v(s)R(t)+R(s)v(t)+v'(s)R'(t)+R'(s)h'(t))\sin(s-t)\right.\nonumber\\
	   	&\quad\quad+\left.(v(s)R'(t)+R(s)v'(t)-v'(s)R(t)-R'(s)v(t))\cos(s-t)\right] dt\nonumber\\
	   	&-\frac{1}{2R(s)}\int\!\!\!\!\!\!\!\!\!\; {}-{} \frac{(R(s)R(t)+R'(s)R'(t))\sin(s-t)+(R(s)R'(t)-R'(s)R(t))\cos(s-t)}{ \left(R(s)-R(t)\right)^2+4R(s)R(t)\sin^2\left(\frac{s-t}{2}\right)}\nonumber\\
	   	&\quad \times \left[2\left(R(s)-R(t)\right)(v(s)-v(t))+4(v(s)R(t)+R(s)v(t))\sin^2\left(\frac{s-t}{2}\right)\right] dt\nonumber\\
	   	&-\sum_{n=1}^{N-1} \frac{v(s)}{ R(s)^2} \int\!\!\!\!\!\!\!\!\!\; {}-{}\log\left( \frac{1}{\left|\boldsymbol x(s)-\boldsymbol x(t)-2\pi n\boldsymbol e_1/N\right|}\right)\nonumber\\
	   	&\quad \times\left[(R(s)R(t)+R'(s)R'(t))\sin(s-t)+(R(s)R'(t)-R'(s)R(t))\cos(s-t)\right] dt\nonumber\\
	   	&+\sum_{n=1}^{N-1} \frac{1}{ R(s)} \int\!\!\!\!\!\!\!\!\!\; {}-{}\log\left( \frac{1}{\left| \boldsymbol x(s)-\boldsymbol x(t)-2\pi n\boldsymbol e_1/N\right|}\right)\nonumber\\
	   	&\quad \times \left[(v(s)R(t)+R(s)v(t)+v'(s)R'(t)+R'(s)v'(t))\sin(s-t)\right.\nonumber\\
	   	&\quad\quad+\left.(v(s)R'(t)+R(s)v'(t)-v'(s)R(t)-R'(s)h(t))\cos(s-t)\right] dt\nonumber\\
	   	&-\sum_{n=1}^{N-1} \frac{1}{ 2R(s)} \int\!\!\!\!\!\!\!\!\!\; {}-{}\frac{\partial_{u}A_nv }{\left|\boldsymbol x(s)-\boldsymbol x(t)-2\pi n\boldsymbol e_1/N\right|^2}\nonumber\\
	   	&\quad \times \left[(R(s)R(t)+R'(s)R'(t))\sin(s-t)+(R(s)R'(t)-R'(s)R(t))\cos(s-t)\right] dt\nonumber\\
	   	&-\sum_{n=0}^{N-1} \frac{2\pi v(s)}{ R(t)^2} \int\!\!\!\!\!\!\!\!\!\; {}-{}H\left(\boldsymbol P_0+\boldsymbol{x}(s),\boldsymbol P_0+\boldsymbol{x}(t)+2\pi n\boldsymbol e_1/N\right)\nonumber\\
	   	&\quad \times\left[(R(s)R(t)+R'(s)R'(t))\sin(s-t)+(R(s)R'(t)-R'(s)R(t))\cos(s-t)\right] dt\nonumber\\
	   	&+\sum_{n=0}^{N-1} \frac{2\pi }{ R(s)} \int\!\!\!\!\!\!\!\!\!\; {}-{}H\left(\boldsymbol P_0+\boldsymbol{x}(s),\boldsymbol P_0+\boldsymbol{x}(t)+2\pi n\boldsymbol e_1/N\right)\nonumber\\
	   	&\quad \times \left[(v(s)R(t)+R(s)v(t)+v'(s)R'(t)+R'(s)v'(t))\sin(s-t)\right.\nonumber\\
	   	&\quad\quad+\left.(v(s)R'(t)+R(s)v'(t)-v'(s)R(t)-R'(s)v(t))\cos(s-t)\right] dt\nonumber\\
	   	&+\sum_{n=0}^{N-1} \frac{2\pi}{ R(s)} \int\!\!\!\!\!\!\!\!\!\; {}-{} \left[\nabla_1H\left(\boldsymbol P_0+\boldsymbol{x}(s),\boldsymbol P_0+\boldsymbol{y}(t)+2\pi n\boldsymbol e_1/N\right) \cdot(v(s)\cos (s), v(s)\sin(s))\right.\nonumber\\
	   	&\left.\quad+\nabla_2H\left(\boldsymbol P_0+\boldsymbol{x}(s),\boldsymbol P_0+\boldsymbol{x}(t)+2\pi n\boldsymbol e_1/N\right)\cdot(v(t)\cos (t),v(t)\sin (t))\right] \nonumber\\
	   	&\quad \times \left[(R(s)R(t)+R'(s)R'(t))\sin(s-t)+(R(s)R'(t)-R'(s)R(t))\cos(s-t)\right] dt,\nonumber\\
	   	&+\gamma\varepsilon v'(s)-\frac{(N-1)\gamma\pi}{N}\left(\frac{\varepsilon v'(s)\cos(s)}{R(s)}-\frac{\varepsilon R'(s)v'(s)\cos(s)}{R(s)^2}\right)\nonumber
	   \end{align}
	   where $\nabla_i H(\cdot,\cdot)$ means the gradient with respect to the $i^\mathrm{th}$ variable in $H$.

	   We have following lemma on the regularity of $\partial_uF(\ep,\gamma,R(s))v$. 
	   \begin{lemma}\label{lem3-2}
	   	The Gautex derivative
	   	$\partial_{u} F(\varepsilon, \gamma, R(s)): \left(-\varepsilon_0, \varepsilon_0\right)\times \mathbb{R} \times V \rightarrow L(X^{k}, Y^{k-1})$ is continuous.
	   \end{lemma}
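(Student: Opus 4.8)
The plan is to proceed exactly as in the proof of Lemma \ref{lem3-1}, now applied to the explicit formula \eqref{3-3} for the linear map $v\mapsto \partial_uF(\varepsilon,\gamma,R(s))v$. First I would record the parity structure: since every $v\in X^k$ is even and $R=1+\ep u$ with $u$ even, the change of variables $t\mapsto -t$ in each integral shows that every term on the right-hand side of \eqref{3-3} is odd in $s$, so $\partial_uF(\varepsilon,\gamma,R(s))v\in Y^{k-1}$ and $\partial_uF$ does take values in $L(X^k,Y^{k-1})$. It then remains to prove the uniform operator-norm bound $\|\partial_uF(\varepsilon,\gamma,R(s))v\|_{Y^{k-1}}\le C\|v\|_{X^k}$ and the continuity of $(\varepsilon,\gamma,R(s))\mapsto \partial_uF(\varepsilon,\gamma,R(s))$ in the operator norm.

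For the bound I would split \eqref{3-3} into the three self-interaction integrals coming from $F_1$ (the first five lines) and the remaining terms. The remaining terms — the sums over $n=1,\dots,N-1$ (whose kernels $\log\frac{1}{|\boldsymbol x(s)-\boldsymbol x(t)-2\pi n\boldsymbol e_1/N|}$ and $|\boldsymbol x(s)-\boldsymbol x(t)-2\pi n\boldsymbol e_1/N|^{-2}$ are bounded away from their singularities for small $\ep$), the sums involving $H\in C^\infty(\mathbb T^2\times\mathbb T^2)$, and the $F_4$-contribution — all carry smooth bounded kernels, so they are controlled directly using $\|\partial^l u\|_{L^\infty},\|\partial^l v\|_{L^\infty}\le C\|\cdot\|_{H^k}$ for $l\le 2$ (valid since $k\ge 3$) together with the chain rule for $H$. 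For the three singular $F_1$-integrals one exploits the diagonal vanishing of the numerators: keeping the decomposition $R(s)R'(t)-R'(s)R(t)=R(s)(R'(t)-R'(s))+R'(s)(R(s)-R(t))$ and the analogous splitting of the $v$-bilinear factor $2(R(s)-R(t))(v(s)-v(t))+4(v(s)R(t)+R(s)v(t))\sin^2\!\big(\tfrac{s-t}{2}\big)$, each numerator vanishes to an order that cancels the singularity $D(u)\sim c(s-t)^2$ of the denominator. Differentiating $k-1$ times and distributing derivatives, the dangerous terms are those in which all $k-1$ derivatives fall on a single factor $\partial^{k-1}v(s)$, $\partial^{k-1}v(t)$, $\partial^{k-1}u(s)$ or $\partial^{k-1}u(t)$; each of these is estimated by $\|v\|_{H^k}$, respectively $\|u\|_{H^k}<1$, times a kernel that is $L^1$ in $t$, via the mean value theorem and H\"older's inequality, exactly as in the argument that $F_1\in H^{k-1}$ in Lemma \ref{lem3-1}.

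For continuity in $L(X^k,Y^{k-1})$ I would fix $v$ with $\|v\|_{X^k}<1$ and bound $\big\|\big(\partial_uF(\varepsilon_1,\gamma_1,R_1)-\partial_uF(\varepsilon_2,\gamma_2,R_2)\big)v\big\|_{Y^{k-1}}$ term by term. The $\varepsilon$- and $\gamma$-dependence of \eqref{3-3} is polynomial (affine in $\gamma$) and the $H$-dependence is smooth, so the only delicate differences are those of the $F_1$-integrals, where the two kernels differ through $D(u_1)$ versus $D(u_2)$ and through the numerator coefficients. Writing each numerator difference as a finite sum of terms each carrying a factor $\Delta(u_1-u_2)$, $(u_1-u_2)(s)$, $(u_1-u_2)(t)$ or a derivative thereof, and combining the elementary rational estimate $\big|\tfrac{1}{D(u_1)}-\tfrac{1}{D(u_2)}\big|\le c\,|s-t|^{-2}\|u_1-u_2\|_{L^\infty}$ with the logarithmic estimate $\big\|\partial^l\big(\log\tfrac{1}{D(u_2)}-\log\tfrac{1}{D(u_1)}\big)|s-t|\big\|_{L^2}\le C\|u_1-u_2\|_{H^k}$ for $l=0,\dots,k-1$ already proved in Lemma \ref{lem3-1}, one obtains $\big\|\big(\partial_uF(\varepsilon_1,\gamma_1,R_1)-\partial_uF(\varepsilon_2,\gamma_2,R_2)\big)v\big\|_{Y^{k-1}}\le C\big(|\varepsilon_1-\varepsilon_2|+|\gamma_1-\gamma_2|+\|u_1-u_2\|_{X^k}\big)$ with $C$ locally bounded, which is the claim. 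The main obstacle throughout is the bookkeeping for the top-order $(k-1)$ derivatives of the self-interaction singular integrals and of their parameter differences: one must track precisely which cancellation kills which power of $(s-t)$ — in particular never breaking the splitting of $R(s)R'(t)-R'(s)R(t)$ nor of the $v$-bilinear factor — so that every remaining kernel stays integrable; once this organization is in place, the estimates are routine applications of the mean value theorem, H\"older's inequality, and the multiplicative structure of $H^k(\mathbb S^1)$ for $k\ge 3$.
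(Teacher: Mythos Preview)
Your proposal is correct and rests on the same kernel estimates as the paper, but the emphasis differs. The paper's proof does not spend its effort on the operator-norm continuity you outline; instead its main content is verifying that the Gateaux derivative actually \emph{exists}, i.e.\ that
\[
\lim_{\tau\to0}\Big\|\frac{F_1^*(\varepsilon,u+\tau v)-F_1^*(\varepsilon,u)}{\tau}-\partial_uF_1^*(\varepsilon,u)v\Big\|_{Y^{k-1}}=0
\]
for the most singular piece $F_1^*$, splitting the difference into two terms $F_{11}^*+F_{12}^*$ and bounding each by a power of $\tau$ via the mean value theorem applied to $\log(1/D(u+\tau v))-\log(1/D(u))$. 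Continuity in $(\varepsilon,\gamma,R)$ is then dismissed in one sentence as ``verify by the definition similarly.'' You take the complementary route: you accept \eqref{3-3} as the derivative and supply precisely the continuity argument the paper waves away, using the $\big\|\partial^l(\log\tfrac{1}{D(u_2)}-\log\tfrac{1}{D(u_1)})|s-t|\big\|_{L^2}$ estimate from Lemma~\ref{lem3-1}. Your approach matches the lemma's literal statement more closely; the paper's buys the existence step that justifies calling \eqref{3-3} the Gateaux derivative in the first place. Together they make a complete argument, and neither requires an idea the other lacks.
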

	   
	   \begin{proof}
	   	For simplicity, we consider the most singular term in $F$, namely, the following special term in $F_1$:
	   	\begin{align*}
	   		F_1^*(\ep, u)=\frac{1}{2}\int\!\!\!\!\!\!\!\!\!\; {}-{} \log\left(\frac{1}{ \left(R(s)-R(t)\right)^2+4R(s)R(t)\sin^2\left(\frac{s-t}{2}\right)}\right)(u'(t)-u'(s))\cos(s-t) dt.\nonumber\\
	   	\end{align*}
	   	We need to prove
	   	\begin{equation}\label{3-4}
	   		\lim\limits_{\tau\to0}\left\|\frac{F_1^*(\varepsilon,u+\tau v)-F_1^*(\varepsilon, u_1)}{\tau}-\partial_uF_1^*(\varepsilon, u)v\right\|_{Y^{k-1}}= 0,
	   	\end{equation}
	   	where $\partial_uF_1^*(\varepsilon, u)v$ is
	   	\begin{align*}
	   		&\frac{1}{2}\int\!\!\!\!\!\!\!\!\!\; {}-{}\log\left(\frac{1}{ \left(R(s)-R(t)\right)^2+4R(x)R(y)\sin^2\left(\frac{s-t}{2}\right)}\right) (v'(t)-v'(s))\cos(s-t)dt\nonumber\\
	   		&-\frac{1}{2}\int\!\!\!\!\!\!\!\!\!\; {}-{} \frac{(u'(t)-u'(s))\cos(s-t)}{ \left(R(s)-R(t)\right)^2+4R(s)R(t)\sin^2\left(\frac{s-t}{2}\right)}\nonumber\\
	   		&\quad \times \left[2\varepsilon\left(R(s)-R(t)\right)(v(s)-v(t))+4\varepsilon(v(s)R(t)+R(s)v(t))\sin^2\left(\frac{s-t}{2}\right)\right] dt.
	   	\end{align*}
	   	Using the notations \eqref{3-1} and  \eqref{3-2} given in Lemma \ref{lem3-1}, we deduce
	   	\begin{equation*}
	   		\begin{split}
	   			&\frac{F_1^*(\varepsilon,u+\tau v)-F_1^*(\varepsilon, u_1)}{\tau}-\partial_uF_1^*(\varepsilon, u)(\varepsilon,u)v\\
	   			&=\frac{1}{2t}\int\!\!\!\!\!\!\!\!\!\; {}-{}(u'(s)-u'(t))\cos(s-t)\\
	   			&\quad \times\left(\log\left(\frac{1}{D(u+\tau v)}\right)-\log\left(\frac{1}{D(u)}\right)+\tau \frac{2\varepsilon^2\Delta u_1\Delta v+4\varepsilon(\tilde Rv+\tilde vR)\sin^2(\frac{s-t}{2})}{D(u)}\right)dt\\
	   			& +\frac{1}{2}\int\!\!\!\!\!\!\!\!\!\; {}-{}(v'(s)-v'(t))\cos(s-t)\left(\log\left(\frac{1}{D(u+\tau v)}\right)-\log\left(\frac{1}{D(u)}\right)\right)dt\\
	   			&=F_{11}^*+F_{12}^*.
	   		\end{split}
	   	\end{equation*}
	   	By taking the $(k-1)^{\mathrm{th}}$ partial derivatives of $F_{11}^*$, we find the most singular term is
	   	\begin{equation*}
	   		\begin{split}
	   			&\quad \frac{1}{2\tau}\int\!\!\!\!\!\!\!\!\!\; {}-{}\left(\log\left(\frac{1}{D(u+\tau v)}\right)-\log\left(\frac{1}{D(u)}\right)+\tau\frac{2\varepsilon^2\Delta u\Delta v+4\varepsilon(\tilde Rv+\tilde vR)\sin^2(\frac{s-t}{2})}{D(u)}\right)\\
	   			& \times(\partial^ku(s)-\partial^ku(t))\cos(s-t)dt.
	   		\end{split}
	   	\end{equation*}
	   	Using the  mean value theorem, we have
	   	\begin{equation*}
	   		\|F_{11}^*\|_{Y^{k-1}}\le C\tau^2\left\|\int\!\!\!\!\!\!\!\!\!\; {}-{}\partial^ku(x)-\partial^ku(y)dy\right\|_{L^2}\le C\tau^2\|u\|_{X^k}.
	   	\end{equation*}
	   	Taking the $(k-1)^{\mathrm{th}}$ partial derivatives of $F_{12}^*$, we have the most singular term,
	   	\begin{align*}
	   		\frac{1}{2}\int\!\!\!\!\!\!\!\!\!\; {}-{}(\partial^kv(s)-\partial^kv(t))\cos(s-t)\left(\log\left(\frac{1}{D(u+\tau v)}\right)-\log\left(\frac{1}{D(u)}\right)\right)dt.
	   	\end{align*}
	   	Using the  mean value theorem, we derive $$\left|\log\left(\frac{1}{D(u+\tau v)}\right)-\left(\frac{1}{D(u)}\right)\right|\leq C\tau,$$
	   	which implies
	   	\begin{equation*}
	   		\|F_{12}^*\|_{Y^{k-1}}\le C\tau\left\|\int\!\!\!\!\!\!\!\!\!\; {}-{}\partial^kv(s)-\partial^kv(t)dt\right\|_{L^2}\le C\tau\|v\|_{X^k}.
	   	\end{equation*}
	   	Letting $\tau\rightarrow 0$, we obtain \eqref{3-4}. As a consequence, we derive the existence of the Gateaux derivative of $F_1^*$. To prove the continuity of $\partial_u F_i, i=1,\cdots,4$, one just need to verify by the definition similarly.
	   \end{proof}

	   \subsection{Linearization at $F (0, \gamma,1)$}
	   
	   Letting $\varepsilon=0$ and $R(s)\equiv1$ in \eqref{3-3} as the expansion for $\partial_{u}F(\ep,\gamma,R(s))$ , we have
	   \begin{align}\label{3-5}
	   	&\quad\partial_uF (0, \gamma,1) v\\
	   	&=\frac{1}{2}\int\!\!\!\!\!\!\!\!\!\; {}-{}\log\left(\frac{1}{4\sin^2\left(\frac{t}{2}\right)}\right) \left[v(s-t)\sin(t)+(v'(s-t)-v'(s))\cos(t)\right] dt\nonumber\\
	   	&-\frac{1}{2}\int\!\!\!\!\!\!\!\!\!\; {}-{} \sin(t)v(s-t)dt.\nonumber
	   \end{align}
	   The following lemma claims the linearization at $F (0, \gamma,1)$ is an isomorphism from $X^k$ to $Y_0^{k-1}$.
	   \begin{lemma}\label{lem3-3}
	   	Let $v(s)=\sum_{j=1}^\infty a_j\cos(js)$ be in $X^k$, then it holds
	   	$$\partial_uF (0, \gamma,1)v=\sum_{j=1}^\infty \frac{j-1}{2}a_j\sin(js).$$
	   	Moreover, for each $\gamma\in \mathbb R$, $\partial_uF (0, \gamma,1): X^k\to Y_0^{k-1}$ is an isomorphism.
	   \end{lemma}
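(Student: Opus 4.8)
The plan is to show that $\partial_uF(0,\gamma,1)$ acts as an explicit diagonal Fourier multiplier on the cosine basis, and then to read off both the formula and the isomorphism from the symbol.

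First I would record the classical expansion
$$\log\left(\frac{1}{4\sin^2(t/2)}\right)=2\sum_{n=1}^{\infty}\frac{\cos(nt)}{n},$$
which comes from $4\sin^2(t/2)=|1-e^{\mathbf i t}|^2$ and $\mathrm{Re}\,\log(1-e^{\mathbf i t})=-\sum_{n\ge1}\cos(nt)/n$; equivalently $\frac{1}{2\pi}\int_0^{2\pi}\log\big(1/4\sin^2(t/2)\big)\cos(mt)\,dt=\tfrac1m$ for $m\ge1$ and $=0$ for $m=0$, while the integral of this kernel against any $\sin(mt)$ vanishes. Then I substitute $v(s)=\cos(js)$, $v'(s)=-j\sin(js)$ into the three integrals in \eqref{3-5}, expand every product $\cos(j(s-t))\sin t$, $\sin(j(s-t))\cos t$ and $\sin(jt)\sin t$ by the product-to-sum formulas, and integrate term by term against the kernel above. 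For $j\ge2$ the surviving contributions are: $\tfrac{1}{2(j^2-1)}\sin(js)$ from the $v(s-t)\sin t$ piece; $-\tfrac{j^2}{2(j^2-1)}\sin(js)$ from the $v'(s-t)\cos t$ piece; $-\tfrac12 v'(s)=\tfrac{j}{2}\sin(js)$ from the $-v'(s)\cos t$ piece; and $0$ from the last integral in \eqref{3-5}. These telescope,
$$\frac{1}{2(j^2-1)}-\frac{j^2}{2(j^2-1)}+\frac{j}{2}=-\frac12+\frac j2=\frac{j-1}{2},$$
so by linearity $\partial_uF(0,\gamma,1)v=\sum_{j\ge1}\tfrac{j-1}{2}a_j\sin(js)$ on trigonometric polynomials; since $\partial_uF(0,\gamma,1)\in L(X^k,Y^{k-1})$ by Lemma~\ref{lem3-2} and such polynomials are dense in $X^k$, the identity extends to all of $X^k$.

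The only step needing genuine care is the mode $j=1$, where the generic coefficient $\tfrac1{j-1}$ is singular. There the term $\cos((j-1)t)$ degenerates to the constant $1$, whose mean against the logarithmic kernel is $0$, while the last integral in \eqref{3-5} now does contribute, namely $-\tfrac14\sin s$ (via $\sin(jt)\sin t=\tfrac12(1-\cos 2t)$). Bookkeeping the four $j=1$ contributions $\big(-\tfrac18,\,-\tfrac18,\,+\tfrac12,\,-\tfrac14\big)$, they cancel exactly, so $\partial_uF(0,\gamma,1)[\cos s]=0=\tfrac{1-1}{2}\sin s$ — this is the infinitesimal $x_1$-translation invariance. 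In particular the range of $\partial_uF(0,\gamma,1)$ carries only sine modes with $j\ge2$, hence lies in $Y_0^{k-1}$; note also that no $\gamma$ appears in \eqref{3-5}, which is why the statement is uniform in $\gamma$.

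Finally, the isomorphism assertion I would settle purely at the level of the symbol $m_j=\tfrac{j-1}{2}$, which satisfies $m_j\asymp j$ for $j\ge2$ — exactly the one-derivative loss matching $X^k\to Y_0^{k-1}$. Boundedness is immediate from $|m_j|\le\tfrac j2$, giving $\|\partial_uF(0,\gamma,1)v\|_{H^{k-1}}\le\tfrac12\|v\|_{H^k}$. For surjectivity, given $w=\sum_{j\ge2}b_j\sin(js)\in Y_0^{k-1}$, set $v=\sum_{j\ge2}\tfrac{2}{j-1}b_j\cos(js)$; since $j^2/(j-1)^2\le4$ for $j\ge2$ one gets $\|v\|_{H^k}\le 4\|w\|_{H^{k-1}}$, so $v\in X^k$ and $\partial_uF(0,\gamma,1)v=w$; on the modes $j\ge2$ (where $m_j\ne0$) the map is injective with inverse $b_j\mapsto\tfrac{2}{j-1}b_j$, bounded by the same estimate, so it is a bounded bijection with bounded inverse between $X^k$ and $Y_0^{k-1}$. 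The main obstacle here is nothing deep — just the careful $j=1$ degeneracy check above together with keeping the product-to-sum bookkeeping error-free.
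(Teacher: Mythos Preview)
Your proposal is correct and follows essentially the same route as the paper: both compute the linearization as the diagonal Fourier multiplier $a_j\cos(js)\mapsto\tfrac{j-1}{2}a_j\sin(js)$ using the identity $\int\!\!\!\!\!\!\!\!\!\; {}-{}\cos(mt)\log\bigl(1/4\sin^2(t/2)\bigr)\,dt=1/|m|$ together with product-to-sum formulas, handle the $j=1$ cancellation separately, and then exhibit the explicit inverse $b_j\mapsto\tfrac{2}{j-1}b_j$ on modes $j\ge2$ with the bound $\|v\|_{X^k}\le C\|p\|_{Y_0^{k-1}}$. The only differences are cosmetic --- you split the logarithmic integral into three separate contributions where the paper keeps it as one block, and you make explicit the density-plus-continuity step (via Lemma~\ref{lem3-2}) passing from trigonometric polynomials to all of $X^k$, which the paper leaves implicit.
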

	   \begin{proof}
	   	We begin with the first term in \eqref{3-5}:
	   	\begin{align*}
	   		&\frac{1}{2}\int\!\!\!\!\!\!\!\!\!\; {}-{}\log\left(\frac{1}{4\sin^2\left(\frac{t}{2}\right)}\right) \left[v(s-t)\sin(t)+(v'(s-t)-v'(s))\cos(t)\right] dt\nonumber\\
	   		&=\sum_{j=1}^\infty \frac{a_j}{2}\int\!\!\!\!\!\!\!\!\!\; {}-{}\log\left(\frac{1}{4\sin^2\left(\frac{t}{2}\right)}\right) \left[\cos(j(s-t))\sin(t)-j\sin(j(s-t))\cos(t)+j\sin(js)\cos(t)\right] dt\nonumber\\
	   		&= \frac{a_1\sin(s)}{2}\int\!\!\!\!\!\!\!\!\!\; {}-{}\log\left(\frac{1}{4\sin^2\left(\frac{t}{2}\right)}\right) \left[\sin(t)\sin(t)-\cos(t)\cos(t)+\cos(t)\right] dt\nonumber\\
	   		&\quad+\sum_{j=2}^\infty \frac{a_j\sin(js)}{2}\int\!\!\!\!\!\!\!\!\!\; {}-{}\log\left(\frac{1}{4\sin^2\left(\frac{t}{2}\right)}\right) \left[\sin(jt)\sin(t)-j\cos(jt)\cos(t)+j\cos(t)\right] dt\nonumber\\
	   		&= \frac{a_1\sin(s)}{2}\int\!\!\!\!\!\!\!\!\!\; {}-{}\log\left(\frac{1}{4\sin^2\left(\frac{t}{2}\right)}\right) \left[-\cos(2t)+\cos(t)\right] dt\nonumber\\
	   		&\quad+\sum_{j=2}^\infty \frac{a_j\sin(js)}{2}\int\!\!\!\!\!\!\!\!\!\; {}-{}\log\left(\frac{1}{4\sin^2\left(\frac{t}{2}\right)}\right) \left[(1-j)\cos((j-1)t)-(j+1)\cos((j+1)t)+2j\cos(t)\right] dt\nonumber\\
	   		&= \frac{a_1\sin(s)}{2}\left(-\frac{1}{2}+1\right)+\sum_{j=2}^\infty \frac{a_j\sin(js)}{4}\left(-1-1+2j\right)\nonumber\\
	   		&= \frac{a_1\sin(s)}{4}+\sum_{j=2}^\infty \frac{j-1}{2}a_j\sin(js),\nonumber\\
	   	\end{align*}
	   	where we use the identity (Lemma 3.3 \cite{Cas1})
	   	\begin{align*}
	   		\int\!\!\!\!\!\!\!\!\!\; {}-{} \cos(mt)\log\left(\frac{1}{4\sin^2\left(\frac{t}{2}\right)}\right)d t&= \begin{cases} \frac{1}{|m|},  &m\in \mathbb Z, m\neq 0,\\  2\ln2, &m=0,\end{cases}\\ 
	   		\int\!\!\!\!\!\!\!\!\!\; {}-{} \sin(mt)\log\left(\frac{1}{4\sin^2\left(\frac{t}{2}\right)}\right)d t&= 0.
	   	\end{align*}
	   	By the trigonometric formula, the second term in \eqref{3-5} can be calculated as follows
	   	\begin{align*}
	   		-\frac{1}{2}\int\!\!\!\!\!\!\!\!\!\; {}-{} \sin(y)v(s-t)dt
	   		&=\sum_{j=1}^\infty -\frac{a_j}{2} \int\!\!\!\!\!\!\!\!\!\; {}-{} \sin(t)\cos(j(s-t))dt\nonumber\\
	   		&=\sum_{j=1}^\infty \frac{-a_j }{4}\int\!\!\!\!\!\!\!\!\!\; {}-{} (\sin(js+(1-j)t)-\sin((j+1)s-jt))dt\nonumber\\
	   		&= \frac{-a_1\sin(s)}{4}.\nonumber
	   	\end{align*}
	   	Thus we obtain
	   	\begin{equation*}
	   		\partial_uF (0, \gamma,1)v=\sum_{j=1}^\infty \frac{j-1}{2}a_j \sin(js).
	   	\end{equation*}
	   	
	   	Now we are going to verify that $\partial_uF (0, \gamma,1): X^k\to Y_0^{k-1}$ is an isomorphism. It is easy to see that $\partial_uF (0, \gamma,1)$ maps $X^k$ to $Y_0^{k-1}$. Thus we only need to show that for each $p(x)\in Y_0^{k-1}$ with the form $p(s)=\sum\limits_{j=2}^\infty b_j\sin(js)$, there exists an $v(s) \in X^k$ such that $\partial_uF (0, \gamma,1)v=p(s)$. From above calculation, we see that this $v$ can be given directly by
	   	\begin{equation*}
	   		v(s)=\sum\limits_{j=2}^\infty \frac{2}{j-1}b_j\cos(js)
	   	\end{equation*}
	   	and
	   	$$\|v\|^2_{X^k}=\sum\limits_{j=2}^\infty\frac{4j^{2k}}{(j-1)^2} b_j^2\le C\sum\limits_{j=2}^\infty b_j^2j^{2k-2}\le C\|p\|^2_{Y_0^{k-1}}.$$
	   	As a result, we deduce $v(s) \in X^k$ and complete the proof.
	   \end{proof}
	   
	   \bigskip
	   
	   \section{The existence for single-layered patches and other properties}\label{sec4}
	   
	   \subsection{The existence by implicit function theorem}
	   Using the Taylor's formula
	   \begin{equation*}
	   	\log\left(\frac{1}{A+B}\right)=\log\left(\frac{1}{A}\right)-\int_0^1\frac{B}{A+tB}dt,
	   \end{equation*}
	   we can expanded $F$ at $(0,\gamma,1)$ as
	   \begin{align}\label{4-1}
	   	F(\ep,\gamma,R(s))&=\left[\partial_uF (0, \gamma,1)u+\ep\mathcal R_1(\ep,R(s))\right]\\
	   	&\quad+\left[\sum_{n=1}^{N-1}\frac{N}{4\pi n}\sin(s)+\ep\mathcal R_2(\ep,R(s))\right]\nonumber\\
	   	&\quad+\left[\sum_{n=0}^{N-1}\partial_{11}H\left(\boldsymbol P_0,\boldsymbol P_0+\frac{2\pi n}{N}\boldsymbol e_1\right)\pi\sin(s)+\ep\mathcal R_3(\ep,R(s))\right]\nonumber\\
	   	&\quad+\left[-\frac{(N-1)\gamma\pi}{N}\sin(s)+\ep\mathcal R_4(\ep,\gamma,R(s))\right]\nonumber,
	   \end{align}
	   where $\ep\mathcal R_i, i=1,\cdots,4$ are left small terms in $F_1,\cdots,F_4$. It should be noticed that the condition
	   \begin{equation}\label{4-2}
	   	-\frac{(N-1)\gamma\pi}{N}+\sum_{n=1}^{N-1}\frac{N}{4\pi n}+\sum_{n=1}^{N-1}\partial_{11}H\left(\boldsymbol P_0,\boldsymbol P_0+\frac{2\pi n}{N}\boldsymbol e_1\right)\pi=0,
	   \end{equation}
	   namely, the coefficient of $\sin(s)$ in $F(0,\gamma,1)$ being $0$, is equivalent with $\gamma=N\pi$, which can be directly deduced by dynamic equations \eqref{2-11} and \eqref{2-12} as the model of $N$ point vortex system with strength $\pi$ on the polygon in $\mathcal D_{\boldsymbol \zeta}$.
	   
	   From the representation of the linearized operator $\partial_uF (0, \gamma,1)$ in Lemma \ref{lem3-3}, we see that it is only an isomorphism from $X^k$ to $Y_0^k$, where the first term $\sin(s)$ is lost. However, $F(\ep,\gamma,1+u(s))$ maps $X^k$ to $Y^k$. To apply the implicit function theorem, we should require the range of $F(\ep,\gamma,R(s))$ to be in $Y^k_0$, which means that the term $\sin(s)$ should not exist in the expansion \eqref{4-1}. To achieve this goal, we will adjust the strength of background vorticity as $\gamma=\gamma(\varepsilon,R(s))$ in the next lemma.
	   \begin{lemma}\label{lem4-1}
	   	There exists
	   	\begin{equation*}
	   		\gamma(\varepsilon, R(s)):=N\pi+\varepsilon\mathcal{R}_\gamma(\varepsilon,R(s))
	   	\end{equation*}
	   	with a continuous function $\mathcal{R}_\gamma(\varepsilon,R(s)):(-\ep_0,\ep_0)\times X^k\to \mathbb{R}$, such that $\tilde F(\varepsilon,R(s)):(-\ep_0,\ep_0)\times V\to Y_0^{k-1}$ is given by
	   	\begin{equation*}
	   		\tilde F(\varepsilon,R(s)):=F(\varepsilon,\gamma(\varepsilon,R(s)), R(s)).
	   	\end{equation*}
	   	Moreover, $\partial_u\mathcal{R}_\gamma(\varepsilon,R(s))v: X^k\to \mathbb{R}$ is continuous.
	   \end{lemma}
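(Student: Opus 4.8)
The plan is to exploit that $F$ depends on $\gamma$ only through $F_4$, and there linearly: from \eqref{2-16} we may write $F(\ep,\gamma,R(s))=A(\ep,R(s))+\gamma\,B(\ep,R(s))$ with $A:=F_1+F_2+F_3$ and
\[
B(\ep,R(s)):=\ep R'(s)-\frac{(N-1)\pi}{N}\left(\frac{R'(s)\cos s}{R(s)}-\sin s\right).
\]
By Lemma \ref{lem3-1}, $A$ and $B$ are continuous from $(-\ep_0,\ep_0)\times V$ into $Y^{k-1}$, and by Lemma \ref{lem3-2} their Gateaux derivatives in $u$ are continuous. Recall from the discussion preceding the lemma that, since the linearization $\partial_uF(0,\gamma,1)$ is an isomorphism only onto $Y_0^{k-1}$ (Lemma \ref{lem3-3}), we must arrange that $\tilde F$ takes values in $Y_0^{k-1}$, i.e. has vanishing $\sin s$-mode. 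Writing $\Pi_1 g:=\tfrac1\pi\int_0^{2\pi}g(s)\sin s\,ds$ for the $\sin s$-Fourier coefficient of $g\in Y^{k-1}$, this is the single scalar equation $\Pi_1 A(\ep,R(s))+\gamma\,\Pi_1 B(\ep,R(s))=0$. I would first note that $B(0,1)=\tfrac{(N-1)\pi}{N}\sin s$, so $b_0:=\Pi_1 B(0,1)\neq0$; since every $R\in V$ lies within distance $|\ep|$ of $1$, continuity of $\Pi_1 B$ lets us shrink $\ep_0$ so that $\Pi_1 B(\ep,R(s))$ stays bounded away from $0$ on the whole domain. The scalar equation then has the explicit solution
\[
\gamma(\ep,R(s)):=-\frac{\Pi_1 A(\ep,R(s))}{\Pi_1 B(\ep,R(s))},
\]
continuous as a quotient with non-vanishing denominator, and by construction $\tilde F(\ep,R(s)):=F(\ep,\gamma(\ep,R(s)),R(s))$ lies in $Y_0^{k-1}$ for every $(\ep,R(s))$ (and is continuous as a composition).

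To extract the asymptotics I would feed in the expansion \eqref{4-1}, which exhibits $A(\ep,R(s))=\partial_uF(0,\gamma,1)u+a_0\sin s+\ep\,\rho_A(\ep,R(s))$ and $B(\ep,R(s))=b_0\sin s+\ep\,\rho_B(\ep,R(s))$, where $a_0:=\Pi_1 A(0,1)$ is the constant in \eqref{4-1} and $\rho_A=\mathcal R_1+\mathcal R_2+\mathcal R_3$, $\rho_B$ are remainders that — by the same estimates proving Lemmas \ref{lem3-1}--\ref{lem3-2} — are continuous in $(\ep,R(s))$ and $C^1$ in $u$ with continuous Gateaux derivatives $\partial_u\rho_A,\partial_u\rho_B$. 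Since $\partial_uF(0,\gamma,1)u\in Y_0^{k-1}$ by Lemma \ref{lem3-3}, applying $\Pi_1$ annihilates that term and gives $\Pi_1 A=a_0+\ep\tilde a$, $\Pi_1 B=b_0+\ep\tilde b$ with $\tilde a:=\Pi_1\rho_A$, $\tilde b:=\Pi_1\rho_B$ continuous. The relation \eqref{4-2} — which is the point-vortex equilibrium \eqref{2-11}--\eqref{2-12} and is equivalent to $\gamma=N\pi$ — says precisely $-a_0/b_0=N\pi$, i.e. $\gamma(0,1)=N\pi$; hence
\[
\gamma(\ep,R(s))-N\pi=-\frac{a_0+\ep\tilde a}{b_0+\ep\tilde b}+\frac{a_0}{b_0}=\ep\cdot\frac{a_0\tilde b-b_0\tilde a}{b_0\,(b_0+\ep\tilde b)},
\]
so I would set $\mathcal R_\gamma(\ep,R(s)):=\dfrac{a_0\tilde b(\ep,R(s))-b_0\tilde a(\ep,R(s))}{b_0\,(b_0+\ep\tilde b(\ep,R(s)))}$, which is continuous once $\ep_0$ is small enough that the denominator stays near $b_0^2\neq0$.

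The one point that genuinely needs care is the continuity of $\partial_u\mathcal R_\gamma$: a priori $\partial_u\tilde a=\ep^{-1}\partial_u(\Pi_1 A)$ and $\partial_u\tilde b=\ep^{-1}\partial_u(\Pi_1 B)$ look singular as $\ep\to0$. Differentiating the expansions above in $u$, however, gives $\partial_u(\Pi_1 A)v=\Pi_1\!\left(\partial_uF(0,\gamma,1)v\right)+\ep\,\Pi_1\!\left(\partial_u\rho_A(\ep,R(s))v\right)$, and the first summand vanishes identically because $\partial_uF(0,\gamma,1)$ maps $X^k$ into $Y_0^{k-1}$ (Lemma \ref{lem3-3}); the same holds for $B$. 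Thus $\partial_u\tilde a=\Pi_1\partial_u\rho_A$ and $\partial_u\tilde b=\Pi_1\partial_u\rho_B$ are continuous by Lemma \ref{lem3-2}, and the quotient rule applied to the formula for $\mathcal R_\gamma$ (denominator bounded away from $0$) delivers the continuity of $\partial_u\mathcal R_\gamma$. So the real work is exactly this cancellation enforced by Lemma \ref{lem3-3}, which removes the apparent $1/\ep$ blow-up, together with the verification that the remainders $\rho_A,\rho_B$ in \eqref{4-1} are $C^1$ in $u$ with continuous derivative; everything else is bookkeeping with the splitting $F=A+\gamma B$ and the continuity statements of Lemmas \ref{lem3-1}--\ref{lem3-2}.
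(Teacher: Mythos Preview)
Your proof is correct and follows essentially the same route as the paper: project the equation $F=0$ onto the $\sin s$-mode, solve the resulting scalar equation for $\gamma$ using that the $\gamma$-dependence is linear with nonzero coefficient $\tfrac{(N-1)\pi}{N}$ at $(\ep,R)=(0,1)$, identify the value $N\pi$ via \eqref{4-2}, and invoke Lemma~\ref{lem3-2} for the continuity of $\partial_u\mathcal R_\gamma$. Your presentation is more explicit than the paper's---in particular the quotient formula $\gamma=-\Pi_1 A/\Pi_1 B$ and the cancellation argument showing why $\partial_u\tilde a,\partial_u\tilde b$ are nonsingular at $\ep=0$---but the underlying mechanism is the same.
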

	   \begin{proof}
	   	It suffices to find $\gamma(\varepsilon, u):(-\ep_0,\ep_0)\times V\to\mathbb{R}$, such that the first Fourier coefficient of $\sin(s)$ vanishes in $F(\varepsilon,\gamma(\varepsilon,R(s)), R(s))$. From \eqref{4-1}, we deduce that $\gamma(\varepsilon, R(s))$ must satisfy
	   	\begin{align*}
	   		0&=\ep\tilde{\mathcal R}_1+\left[\sum_{n=1}^{N-1}\frac{N}{4\pi n}\sin(s)+\ep\tilde{\mathcal R}_2\right]\nonumber\\
	   		&\quad+\left[\sum_{n=0}^{N-1}\partial_{11}H\left(\boldsymbol P_0,\boldsymbol P_0+\frac{2\pi n}{N}\boldsymbol e_1\right)\pi\sin(s)+\ep\tilde{\mathcal R}_3\right]\nonumber\\
	   		&\quad+\left[-\frac{(N-1)\gamma(\ep, R(s))\pi}{N}\sin(s)+\ep\tilde{\mathcal R}_4\right],
	   	\end{align*}
	   	where
	   	$$\tilde{\mathcal{R}}_i=2\int\!\!\!\!\!\!\!\!\!\; {}-{}  \mathcal R_i\sin(s)ds, \quad i=1,\cdots,4$$
	   	is the contribution of $\mathcal R_i$ to the first Fourier coefficient. Notice that the coefficient for $\sin(s)$ in above equality is exactly the left hand side of \eqref{4-2} together with $\ep\tilde {\mathcal R}_1,\dots,\ep\tilde {\mathcal R}_4$ four $O(\ep)$-order perturbation terms. Using the fact $\gamma=N\pi$ solving \eqref{4-2}, we claim that
	   	\begin{equation*}
	   		\gamma(\varepsilon, R(s))=N\pi+\varepsilon\mathcal{R}_\gamma(\varepsilon,R(s)),
	   	\end{equation*}
	   	is an $O(\ep)$-perturbation near $N\pi$, where $\mathcal{R}_\gamma(\varepsilon,R(s)):X^k\to \mathbb{R}$ is some continuous function. Since $\partial_u{\mathcal{R}}_i(\varepsilon,R(s))$ $(i=1,\cdots,4)$ are continuous from Lemma \ref{lem3-2},  $\partial_u\mathcal{R}_\gamma(\varepsilon,R(s))v:X^k\to \mathbb{R}$ is also continuous. The proof is thus complete.	
	   \end{proof}
	   
	   Now, we are in a position to prove the existence of vortex patches.
	   
	   {\bf Proof of Theorem \ref{thm1}:}
	   We first prove that $\partial_u\tilde F(\varepsilon,R(s))v: X^k\to Y^{k-1}_0$ is an isomorphism. By chain rule, it holds
	   \begin{equation*}
	   	\partial_u\tilde F(0,1)v=\partial_\gamma F(0,N\pi,1)\partial_u\gamma(0,1)v+\partial_u F(0,N\pi,1)v.
	   \end{equation*}
	   From the discussion in Lemma \ref{lem4-1}, we see $\partial_u\gamma(0,1)=0$. Hence $\partial_u\tilde F(0,1)v=\partial_u F(0,N\pi,1)$, and we achieve the desired result by Lemma \ref{lem3-3}.
	   
	   According to the regularity of $F$ and Lemma \ref{lem4-1}, we can apply implicit function theorem, and claim that there exists $\epsilon_0>0$ such that
	   \begin{equation*}
	   	\left\{(\varepsilon,R)\in [-\epsilon_0,\epsilon_0]\times V \ : \ \tilde F(\varepsilon,R)=0\right\}
	   \end{equation*}
	   is parameterized by one-dimensional curve $\varepsilon\in [-\epsilon_0,\epsilon_0]\to (\varepsilon, R_\varepsilon)$. 
	   Moreover, we need to show that for each $\varepsilon\in[-\epsilon_0,\epsilon_0]\setminus \{0\}$, it always holds $R_\varepsilon\neq 1$. 
	   For this purpose, we can apply the Taylor's formula again on \eqref{4-1} to expand $\mathcal{R}_3$ and $\mathcal{R}_4$ as
	   \begin{equation*}
	   	\mathcal{R}_3+\mathcal{R}_4=\varepsilon \hat C\sin(2s)+o(\varepsilon)
	   \end{equation*}
	   with $\hat C$ being a constant depending on $d$ and $N$. On the other hand, by we have
	   $$F_1(\varepsilon,\gamma,1)=-\frac{(N-1)\gamma\pi}{N}\sin(s), \ \ \ \ \ \ \ F_2(\varepsilon,\gamma,1)=0.$$
	   Hence it follows that
	   \begin{equation*}
	   	F(\varepsilon,\gamma,1)\neq 0,  \ \ \ \ \ \forall \, \varepsilon\in [-\epsilon_0,\epsilon_0]\setminus \{0\}
	   \end{equation*}
	   as long as $\epsilon_0$ is chosen sufficiently small.
	   
	   Let $\tilde R(s)=R(-s)$. Our last step is to show that if $(\varepsilon,R)$ is a solution to $\tilde F(\varepsilon,R)=0$, then $(-\varepsilon, \tilde R)$ is also a solution. By replacing $t$ with $-t$ in the integral representation of $F(\varepsilon,\gamma,R)$ and using the fact that $R$ is an even function, we obtain $\gamma(\varepsilon,R)=\gamma(-\varepsilon,\tilde R)$. Then we can insert it into $F(\varepsilon,\gamma,R)$ and derive $\tilde F(-\varepsilon, R)=0$ by a similar substitution of variables as before.
	   
	   Hence we complete the proof of Theorem \ref{thm1}.\qed
	   
	   \smallskip
	   
	   \subsection{The regularity and convexity}
	   
	   In this section, we show the regularity of vortices boundary and the convexity of $\Omega_0^\varepsilon$. Recall that we split
	   \begin{equation*}
	   	F(\varepsilon, \gamma, R)=F_1+F_2+F_3+F_4
	   \end{equation*}
	   in \eqref{2-15}. We will take the $(k-1)^{\mathrm{th}}$ derivatives of $F=0$ and divide the equation into the form
	   \begin{equation}\label{4-3}
	   	L(\partial ^{k-1} u) +S(\partial^{k-1} u)=J(u),
	   \end{equation}
	   where $L$ is linear part of the most singular term and $L$ is invertible, $S$ has the same singularity as $L$ but $S$ is small and negligible, and $J$ is the remaining terms, which are more regular than $L$ and $S$. Then, we are able to improve the regularity of $u$ by the bootstrap method thanks to the difference of singularity on both sides of the equation \eqref{4-1} and the invertibility of $L+S$. 
	   
	   We choose $L$, $S$ and $J$ as follows.
	   \begin{equation}\label{4-4}
	   	L(\partial ^{k-1} u)=\int\!\!\!\!\!\!\!\!\!\; {}-{} \log\left(\frac{1}{\left(4\sin^2\left(\frac{t}{2}\right)\right)^{\frac{1}{2}}}\right)\cos(t)\left((\partial ^{k-1}u)'(s)-(\partial ^{k-1}u)'(s-t)\right)dt,
	   \end{equation}
	   
	   \begin{align}\label{4-5}
	   	S(\partial ^{k-1} u)
	   	&=-\gamma\varepsilon^{2}(\partial ^{k-1}u)'(s)+\frac{(N-1)\gamma\pi}{N}\cdot\frac{\varepsilon(\partial ^{k-1}u)'(s)\cos(s)}{R(s)}\nonumber\\
	   	&-\frac{\varepsilon u'(s)}{R(s)}\int\!\!\!\!\!\!\!\!\!\; {}-{} \log\left(\frac{1}{\left|\left(R(s)-R(s-t)\right)^2+4R(s)R(s-t)\sin^2\left(\frac{t}{2}\right)\right|^{\frac{1}{2}}}\right)\nonumber\\
	   	&\quad \times\sin(t)(\partial ^{k-1}u)'(s-t)dt\nonumber\\
	   	&-\frac{\varepsilon(\partial ^{k-1}u)'(s)}{ R(s)}\int\!\!\!\!\!\!\!\!\!\; {}-{} \log\left(\frac{1}{\left|\left(R(s)-R(s-t)\right)^2+4R(s)R(s-t)\sin^2\left(\frac{t}{2}\right)\right|^{\frac{1}{2}}}\right)\nonumber\\
	   	&\quad\times \sin(t)u'(s-t)+\cos(t)\left(u(s)-u(s-t)\right)dt\nonumber\\
	   	&-\ep\partial^{k-1}\mathcal R_3(\ep,R(s))-\ep\partial^{k-1}\mathcal R_4(\ep,R(s)) \nonumber\\
	   	&-\int\!\!\!\!\!\!\!\!\!\; {}-{} \cos(t)\left((\partial ^{k-1}f)'(s)-(\partial ^{k-1}f)'(s-t)\right)\nonumber\\
	   	&\quad\times \log\left[\frac{\left(4\sin^2\left(\frac{t}{2}\right)\right)^{\frac{1}{2}}}{\left|\left(R(s)-R(s-t)\right)^2+4R(s)R(s-t)\sin^2\left(\frac{t}{2}\right)\right|^{\frac{1}{2}}}\right]dt\nonumber\\
	   	&=S_1+S_2+S_3+S_4+S_5,
	   \end{align}
	   and  
	   \begin{align}\label{4-6}
	   	J(f)
	   	&=\partial ^{k-1}\bigg((\frac{1}{\ep R(x)}\int\!\!\!\!\!\!\!\!\!\; {}-{} \log\left(\frac{1}{ \left(R(s)-R(t)\right)^2+4R(s)R(t)\sin^2\left(\frac{s-t}{2}\right)}\right)\nonumber\\
	   	&\quad\times \left[(R(s)R(t)+R'(s)R'(t))\sin(s-t)+(R(s)R'(t)-R'(s)R(t))\cos(s-t)\right] \bigg))dt\nonumber\\
	   	&\quad-\frac{\varepsilon u'(s)}{R(s)}\int\!\!\!\!\!\!\!\!\!\; {}-{} \log\left(\frac{1}{\left|\left(R(s)-R(s-t)\right)^2+4R(s)R(s-t)\sin^2\left(\frac{t}{2}\right)\right|^{\frac{1}{2}}}\right)\nonumber\\
	   	&\quad \times\sin(t)(\partial ^{k-1}u)'(s-t)dt\nonumber\\
	   	&\quad-\frac{\varepsilon(\partial ^{k-1}u)'(s)}{ R(s)}\int\!\!\!\!\!\!\!\!\!\; {}-{} \log\left(\frac{1}{\left|\left(R(s)-R(s-t)\right)^2+4R(s)R(s-t)\sin^2\left(\frac{t}{2}\right)\right|^{\frac{1}{2}}}\right)\nonumber\\
	   	&\quad\times \sin(t)u'(s-t)+\cos(t)\left(u(s)-u(s-t)\right)dt\nonumber\\
	   	&+\partial^{k-1}\left(\sum_{n=1}^{N-1}\frac{N}{4\pi n}\sin(s)\right)+\partial^{k-1}\left(\sum_{n=1}^{N-1}\partial_{11}H(\boldsymbol P_0,\boldsymbol P_0+\frac{2\pi n}{N}\boldsymbol e_1)\pi\sin(s)\right)\nonumber\\
	   	&-\frac{(N-1)\gamma\pi}{N}\partial_{k-1}\left(\frac{R'(s)\cos(s)}{R(s)}-\sin(s)\right)+\frac{(N-1)\gamma\pi}{N}\cdot\frac{\varepsilon(\partial ^{k-1}u)'(s)\cos(s)}{R(s)}\nonumber\\
	   	&=J_1(f)+J_2(f)+J_3(f),
	   \end{align}
	   where $\ep\mathcal R_3$ and $\ep\mathcal R_4$ are small terms defined in \eqref{4-1}. 
	   \begin{lemma}\label{lem4-2}
	   	$L$ is linear and invertible and maps $H^{2}$ to $H^{1}$.
	   \end{lemma}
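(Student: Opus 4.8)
The plan is to diagonalize $L$ in the trigonometric basis of $\mathbb S^1$. Linearity is immediate, since the integrand in \eqref{4-4} depends linearly on $\partial^{k-1}u$. Writing $w:=\partial^{k-1}u$, the operator reads
$$Lw(s)=\frac{1}{2\pi}\int_0^{2\pi}\log\!\left(\frac{1}{\bigl(4\sin^2(t/2)\bigr)^{1/2}}\right)\cos(t)\bigl(w'(s)-w'(s-t)\bigr)\,dt .$$
The kernel $K(t)=-\tfrac12\log\bigl(4\sin^2(t/2)\bigr)$ has only a logarithmic singularity at $t=0$, hence $K\in L^1(\mathbb S^1)$, while the factor $w'(s)-w'(s-t)$ vanishes to first order in $t$ at the origin; this is exactly the cancellation that makes $Lw$ one derivative less regular than $w$, so $H^2\to H^1$ is the natural mapping property. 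Since $L$ kills constants, the relevant domain is the mean-zero subspace, which is harmless because every function entering \eqref{4-3} is a derivative of some $u\in X^k$ and hence has no constant Fourier mode.

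First I would compute the Fourier symbol. Inserting $w=\cos(js)$ and $w=\sin(js)$, expanding $w'(s)-w'(s-t)$ by the addition formulas, converting the products $\cos(t)\cos(jt)$ and $\cos(t)\sin(jt)$ into sums, and using the two integral identities recalled in the proof of Lemma \ref{lem3-3} (namely $\frac{1}{2\pi}\int_0^{2\pi}\cos(mt)\log\frac{1}{4\sin^2(t/2)}\,dt$ equals $1/|m|$ for $m\neq0$ and $2\ln 2$ for $m=0$, while the analogous $\sin(mt)$ integrals vanish), one finds that $L$ exchanges cosine and sine modes,
$$L\bigl(\cos(js)\bigr)=\lambda_j\sin(js),\qquad L\bigl(\sin(js)\bigr)=-\lambda_j\cos(js),\qquad L(1)=0,$$
with
$$\lambda_j=-\frac{j(j^2-j-1)}{2(j^2-1)}\ \ (j\ge 2),\qquad \lambda_1=\frac{4\ln 2-3}{8}.$$
(The two off-diagonal coefficients share the same modulus because the $\cos(t)\sin(jt)$ integral against $K$ is odd, hence $0$.) Thus, up to the bounded exchange $\cos\leftrightarrow\sin$, $L$ is the Fourier multiplier with symbol $\lambda_j$.

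Then the decisive quantitative fact is the two-sided bound $c_1\,j\le|\lambda_j|\le c_2\,j$ for all $j\ge1$, with constants $0<c_1\le c_2$. Indeed, for $j\ge 2$ write $\frac{j^2-j-1}{j^2-1}=1-\frac{j}{j^2-1}$; the subtracted term is decreasing in $j$ with maximal value $\tfrac23$ at $j=2$, so $\frac{j^2-j-1}{j^2-1}\in[\tfrac13,1)$ and hence $\tfrac{j}{6}\le|\lambda_j|\le\tfrac{j}{2}$; and $\lambda_1=\tfrac18(4\ln 2-3)\neq 0$ since $\ln 2\neq \tfrac34$, so after shrinking $c_1$ and enlarging $c_2$ to absorb the mode $j=1$ the claimed bound holds. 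Consequently, for $w=\sum_{j\ge1}\bigl(a_j\cos(js)+b_j\sin(js)\bigr)$ one gets $\|Lw\|_{H^1}^2=\sum_{j\ge1}j^2\lambda_j^2(a_j^2+b_j^2)\le c_2^2\,\|w\|_{H^2}^2$, so $L:H^2\to H^1$ is bounded; and the operator with symbol $\lambda_j^{-1}$ (again with the $\cos\leftrightarrow\sin$ exchange) sends $H^1$ into $H^2$ with norm $\le c_1^{-1}$ and is a two-sided inverse of $L$ on the mean-zero subspace. This gives that $L$ is a linear isomorphism $H^2\to H^1$ in the required sense.

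The only point demanding care — the main obstacle, modest as it is — is ruling out any degeneration of the symbol at the lowest modes: the closed formula for $\lambda_j$ is singular at $j=1$, so $\lambda_1$ must be evaluated separately using the $m=0$ identity $\frac{1}{2\pi}\int_0^{2\pi}\log\frac{1}{4\sin^2(t/2)}\,dt=2\ln 2$, which yields the small but nonzero value $\tfrac18(4\ln 2-3)$; and one must confirm $\lambda_j\neq0$ for every integer $j\ge 2$, i.e. $j^2-j-1\neq0$, which holds because $x^2-x-1$ has no integer root. Everything else is the routine Fourier bookkeeping sketched above.
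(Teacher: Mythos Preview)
Your approach is the same as the paper's: both diagonalize $L$ in the trigonometric basis, read off the symbol $\lambda_j$, and observe that it is nonzero with $|\lambda_j|\sim j$, which yields boundedness $H^2\to H^1$ and an explicit inverse. One small correction that does not affect your argument: the $m=0$ identity you (and the paper) quote is not right, since $\tfrac{1}{2\pi}\int_0^{2\pi}\log\frac{1}{4\sin^2(t/2)}\,dt=0$ (recall $\log\frac{1}{4\sin^2(t/2)}=2\sum_{n\ge1}\frac{\cos(nt)}{n}$), so the correct value is $\lambda_1=-\tfrac{3}{8}$ rather than $\tfrac{4\ln 2-3}{8}$; this is still nonzero, so your conclusion stands, and incidentally the paper's own implicit value $\lambda_1=-\tfrac14$ (from $\tilde a_1=4b_1$) matches neither computation.
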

	   \begin{proof}
	   	It is obvious that $L$ is linear and maps $H^{2}$ to $H^{1}$. 
	   	Hence, we only need to prove that $L$ is invertible. For any $p(s)\in H^1(\mathbb S^1)$, without loss of generality, we assume
	   	\begin{equation*}
	   		p(s)=\sum_{j=1}^\infty \left(a_j\sin(js)+b_j\cos(js)\right).
	   	\end{equation*}
	   	Set 
	   	\begin{equation*}
	   		q(s)=\sum_{j=1}^\infty \left(\tilde{a}_j\sin(js)+\tilde{b}_j\cos(js)\right),
	   	\end{equation*}
	   	where $\tilde{a}_j$ and $\tilde{b}_j$ is given by
	   	\begin{equation*}
	   		\tilde{a}_j=\frac{2b_j}{j\left(\frac{1}{2j-2}+\frac{1}{2j+2}-1\right)},\quad
	   		\tilde{b}_j=\frac{-2a_j}{j\left(\frac{1}{2j-2}+\frac{1}{2j+2}-1\right)}, \quad  \ j\ge 2,
	   	\end{equation*}
	   	with $\tilde a_1=4b_1$ and $\tilde b_1=-4a_1$.
	   	It is easy to check that $0<|\bar{a}_j|, |\bar{b}_j|<\infty$ and hence $\tilde{a}_j, \tilde{b}_j$ are well-defined. Note that $|\tilde{a}_j|\sim b_j/j$ and $|\bar{b}_j|\sim a_j/j$ for $j$ large, thus $q\in H^{2}$. Thus we obtain $L(q)=p$, and the uniqueness of $q$ follows easily.
	   \end{proof}
	   \begin{lemma}\label{lem4-3}
	   	$||S(\partial ^{k-1} u)||_{H^1}\leq C(\varepsilon) ||\partial ^{k-1} u||_{H^{2}}$, where $C(\varepsilon) \to0$ as $\varepsilon\to 0$.
	   \end{lemma}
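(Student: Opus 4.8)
The plan is to bound each of the five pieces $S_{1},\dots,S_{5}$ in the splitting \eqref{4-5} separately, the decisive feature being that every one of them carries an explicit power of $\varepsilon$ out front, so that the constant assembled at the end satisfies $C(\varepsilon)=O(\varepsilon)\to 0$. Throughout I treat $w:=\partial^{k-1}u$ as the argument of the linear operator $S$, so that $w\in H^{2}(\mathbb S^{1})$, $w'\in H^{1}\hookrightarrow C^{0,1/2}$ and $w''\in L^{2}$; all factors built from $u$ alone — namely $R=1+\varepsilon u$, $R^{-1}$, $u'$ and the fixed integral expressions in $s$ that appear as prefactors — are treated as coefficients, and since $\|u\|_{X^{k}}<1$ with $k\ge 3$ they are bounded in $H^{k-1}\subset H^{2}(\mathbb S^{1})$, hence act boundedly on $H^{1}(\mathbb S^{1})$ by the Banach algebra property.

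Four of the five terms are routine. For $S_{1}=-\gamma\varepsilon^{2}w'(s)+\tfrac{(N-1)\gamma\pi}{N}\varepsilon\,w'(s)\cos(s)/R(s)$ one multiplies $w'\in H^{1}$ by the $H^{2}$ coefficient $\cos(s)/R(s)$, giving $\|S_{1}\|_{H^{1}}\le C\varepsilon\|w\|_{H^{2}}$. In $S_{2}$ and $S_{3}$ the relevant kernel $\log\bigl(1/|(R(s)-R(s-t))^{2}+4R(s)R(s-t)\sin^{2}(t/2)|^{1/2}\bigr)$ is comparable to $\log(1/|t|)$ near $t=0$ because $R$ is bounded away from $0$, hence lies in $L^{1}\cap L^{2}(\mathbb S^{1})$ and convolution against it is bounded on $H^{m}$; in $S_{3}$ the factor $w'(s)$ sits outside the integral, the integral being a fixed $H^{k-1}$ function of $s$, while in $S_{2}$ the convolution acts on $w'(s-t)$ and is then multiplied by $\varepsilon u'(s)/R(s)$, so $\|S_{2}\|_{H^{1}}+\|S_{3}\|_{H^{1}}\le C\varepsilon\|w\|_{H^{2}}$. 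Finally $S_{4}=-\varepsilon\partial^{k-1}\mathcal R_{3}-\varepsilon\partial^{k-1}\mathcal R_{4}$ comes from the $C^{\infty}$ regular part $H$ and the smooth background-vorticity contribution; its $(k-1)$-st derivative depends on $w$ linearly with smooth coefficients and no loss of regularity, so $\|S_{4}\|_{H^{1}}\le C\varepsilon\|w\|_{H^{1}}\le C\varepsilon\|w\|_{H^{2}}$.

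The main obstacle is $S_{5}$, the ``log-ratio'' term, and here the key observation is the identity
$$\frac{(R(s)-R(s-t))^{2}+4R(s)R(s-t)\sin^{2}(t/2)}{4\sin^{2}(t/2)}=R(s)R(s-t)+\frac{\varepsilon^{2}\bigl(u(s)-u(s-t)\bigr)^{2}}{4\sin^{2}(t/2)}=1+O(\varepsilon)$$
uniformly in $(s,t)$, the last quotient being bounded by $C\|u'\|_{L^{\infty}}^{2}$. Hence the logarithm $m(s,t)$ forming the kernel of $S_{5}$ is $O(\varepsilon)$ and, crucially, has \emph{no} singularity at $t=0$: the two logarithmic singularities cancel. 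Bounding $S_{5}$ in $H^{1}$ forces one to differentiate $m$ once in $s$, which produces a term of the form $\varepsilon^{2}\bigl(u(s)-u(s-t)\bigr)u'(s)/\sin^{2}(t/2)$, of size $O(\varepsilon^{2}/|t|)$ near $t=0$; this is the only genuinely delicate point. It is resolved by keeping this factor paired with the finite difference $w'(s)-w'(s-t)$ already present in $S_{5}$ and using $\|w'(\cdot)-w'(\cdot-t)\|_{L^{2}}\le|t|\,\|w''\|_{L^{2}}$ for $|t|\le\pi$: the $1/|t|$ is absorbed and the remaining integrand is bounded in $L^{2}_{s}$, uniformly in $t$, by $C\varepsilon^{2}\|u\|_{H^{k}}\|w''\|_{L^{2}}$, which integrates to $C\varepsilon\|w\|_{H^{2}}$; the genuinely $O(\varepsilon)$ parts of $m$ and $\partial_{s}m$ are treated the same way. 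Summing the five bounds gives $\|S(\partial^{k-1}u)\|_{H^{1}}\le C(\varepsilon)\|\partial^{k-1}u\|_{H^{2}}$ with $C(\varepsilon)=O(\varepsilon)$, which tends to $0$ as $\varepsilon\to0$.
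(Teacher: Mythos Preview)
Your proposal is correct and follows exactly the same route as the paper: bound $S_{1},\dots,S_{4}$ by the explicit factor of $\varepsilon$ in front, and handle $S_{5}$ by recognising that the log-ratio kernel has argument $1+O(\varepsilon)$ (your identity $R(s)R(s-t)+\varepsilon^{2}(u(s)-u(s-t))^{2}/4\sin^{2}(t/2)=1+O(\varepsilon)$ is precisely the paper's observation $(R(s)-R(s-t))^{2}+4R(s)R(s-t)\sin^{2}(t/2)=4\sin^{2}(t/2)+O(\varepsilon)$ rewritten). The paper's proof is a two-line sketch, so you have supplied considerably more detail, in particular for the $H^{1}$ bound on $S_{5}$; your treatment of the possible $1/|t|$ growth of $\partial_{s}m$ via the finite difference $w'(s)-w'(s-t)$ is in fact a bit conservative (since $u\in H^{k}$, $k\ge3$, gives $u'(s)-u'(s-t)=O(|t|)$ and $\partial_{s}m$ is actually bounded), but the argument is sound either way.
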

	   \begin{proof}
	   	It is obvious $||S_i||_{H^1}\leq C\varepsilon ||u||_{H^{2}}$ for $i=1,\cdots,4$.  For $S_5$, by noting that
	   	$$\left(R(s)-R(s-t)\right)^2+4R(s)R(s-t)\sin^2\left(\frac{t}{2}\right)=4\sin^2\left(\frac{t}{2}\right)+O(\ep),$$
	   	we deduce $||S_5||_{H^1}\leq C(\varepsilon) ||u||_{H^{2}}$ where $C(\varepsilon) \to0$ as $\varepsilon\to 0$.
	   \end{proof}
	   
	   \begin{lemma}\label{lem4-4}
	   	If $f\in X^{k}$, then $J(u)\in H^{1}$.
	   \end{lemma}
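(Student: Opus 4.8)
The plan is to run through the terms constituting $J(u)$ in \eqref{4-6} one at a time and verify that each lies in $H^{1}(\mathbb S^1)$, using only the embedding $u\in H^{k}\hookrightarrow C^{2}(\mathbb S^1)$ (valid since $k\ge3$; here the hypothesis ``$f\in X^{k}$'' is read as $u\in X^{k}$). The structural point that makes this work, and which I would isolate first, is that in $J(u)$ the top-order derivative $u^{(k)}=(\partial^{k-1}u)'$ appears \emph{only} under a convolution against the kernel $\log\!\bigl(1/(4\sin^{2}(t/2))\bigr)$; by the identities recalled in Lemma~3.3 of \cite{Cas1} that convolution is a Fourier multiplier of order $-1$, hence maps $L^{2}$ into $H^{1}$, while every other occurrence of $u$ in $J(u)$ involves at most $\partial^{k-1}u\in H^{1}$. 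The reason is that $L(\partial^{k-1}u)+S(\partial^{k-1}u)$ has been arranged to carry off precisely the dangerous pieces: the ``bare'' top-order terms (a function of $s$ alone times $u^{(k)}$, as in $S_{1}$), the first-order differences $u^{(k)}(s)-u^{(k)}(s-t)$ tested against the weakly singular kernel (as in $L$ and $S_{5}$), and the $\varepsilon$-small remainders (as in $S_{2}$--$S_{4}$); what is left in $J$ is subcritical.

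For the self-interaction part (the $F_{1}$-terms) I would repeat the computation of Lemma~\ref{lem3-1}, pushed one derivative higher: expand $\log\frac{1}{A+B}=\log\frac1A-\int_{0}^{1}\frac{B}{A+\tau B}\,d\tau$ with $A=4\sin^{2}(\tfrac{s-t}{2})$ and $B=O(\varepsilon)$, and use the splitting $R(s)R'(t)-R'(s)R(t)=R(s)\bigl(R'(t)-R'(s)\bigr)+R'(s)\bigl(R(s)-R(t)\bigr)$ so that any factor multiplying a top-order derivative vanishes on the diagonal $t=s$. Carrying out $\partial^{k-1}$ and discarding the pieces already placed in $L$, $S_{2}$, $S_{3}$, $S_{5}$, the surviving integrands are of three kinds: (i) terms carrying at most $\partial^{k-2}u$ freely, which lie in $H^{k-2}\subset H^{1}$ and are only improved by the bounded log-convolution; (ii) terms carrying $\partial^{k-1}u(s)$ freely, which are in $H^{1}$ directly; and (iii) convolution terms of the shape $\int\log\frac1A\,(\partial^{k-1}u)'(s-t)\,\varphi(s,t)\,dt$ with $\varphi$ built from $R$, $R'$, $\sin$, $\cos$, where the order-$(-1)$ smoothing upgrades the $L^{2}$ datum $u^{(k)}$ to an $H^{1}$ function (the weight $\varphi\in H^{k-1}$ with $k-1>1/2$ does not spoil this). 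The mean-value theorem, Hölder's inequality and $\|\partial^{l}u\|_{L^{\infty}}\le C\|u\|_{H^{k}}$ for $l\le2$ close all these estimates.

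The contributions of $F_{2}$, $F_{3}$, $F_{4}$ and the $\sin s$ constants are comparatively soft. For $n=1,\dots,N-1$ one has $\lvert\boldsymbol x(s)-\boldsymbol x(t)-2\pi n\boldsymbol e_{1}/N\rvert\ge c_{N}>0$ uniformly for $\varepsilon<\varepsilon_{0}$, so both $\log\!\bigl(1/\lvert\boldsymbol x(s)-\boldsymbol x(t)-2\pi n\boldsymbol e_{1}/N\rvert\bigr)$ and, by \eqref{2-4}, the quantities $H(\boldsymbol P_{0}+\boldsymbol x(s),\boldsymbol P_{0}+\boldsymbol x(t)+2\pi n\boldsymbol e_{1}/N)$ and $\nabla_{i}H$ are $C^{\infty}$ jointly in $(s,t)$. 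A configuration of $\partial^{k-1}$ landing entirely on an $R'(s)$-factor would a priori create a bare $u^{(k)}(s)$ here, but its coefficient is an integral $\int(\text{smooth kernel})\cos(s-t)\,dt$ whose $\varepsilon=0$ part is $t$-independent and therefore has mean zero, so the coefficient is $O(\varepsilon)$; these pieces are $\varepsilon$-small and belong to $S$ (they are the $\varepsilon\partial^{k-1}\mathcal R_{3}$-type terms in $S_{4}$ and the analogue coming from $\mathcal R_{2}$). Consequently what remains of $F_{2},F_{3}$ in $J$ involves $u$ only through $\le\partial^{k-1}u$ against $C^{\infty}$ kernels and lies in $H^{k}\subset H^{1}$, while the $F_{4}$-originated and $\sin s$ terms in \eqref{4-6} are explicit smooth functions of $s$ times $R$, $R'$, $1/R$ with $R=1+\varepsilon u$, hence in $H^{k-1}\subset H^{1}$.

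The one genuinely delicate ingredient is the bookkeeping behind the previous paragraphs: one must check, by reading \eqref{4-6} against \eqref{4-4}--\eqref{4-5}, that after the $(k-1)$-fold differentiation every residual appearance of $u^{(k)}$ in $J$ is of the benign convolution type (iii) and never bare at the base point $s$ — the diagonal-vanishing split of $R(s)R'(t)-R'(s)R(t)$ and the mean-zero cancellation in the $F_{2},F_{3}$ integrals are precisely the two mechanisms that guarantee it. After this structural point the quantitative work is routine. With Lemma~\ref{lem4-4} available, Lemmas~\ref{lem4-2}--\ref{lem4-3} show that $L+S$ is invertible from $H^{2}$ onto $H^{1}$ for $\varepsilon_{0}$ small, so $\partial^{k-1}u=(L+S)^{-1}J(u)\in H^{2}$, i.e. $u\in H^{k+1}$; iterating with $k$ replaced by $k+1,k+2,\dots$ gives $u\in C^{\infty}(\mathbb S^1)$, and the ensuing $C^{2}$ control renders the curvature of $\partial\Omega_{0}^{\varepsilon}$ well defined and of constant sign for small $\varepsilon$, yielding the convexity in Theorem~\ref{thm2}.
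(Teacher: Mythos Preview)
Your proposal is correct and follows the same overall strategy as the paper: isolate, after $(k-1)$ differentiations, the pieces carrying the top-order factor $u^{(k)}$, observe that in $J$ these occur only in the benign forms (convolved against the weakly singular log-kernel, never bare at the base point $s$), and conclude $J\in H^{1}$ so that the bootstrap runs. The bookkeeping you highlight---the diagonal-vanishing split of $R(s)R'(t)-R'(s)R(t)$ for the self-interaction and the mean-zero cancellation $\int\cos(s-t)\,dt=0$ that forces the $F_{2},F_{3}$ top-order coefficients to be $O(\varepsilon)$---is exactly the structural input the paper relies on.

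The one stylistic difference is in how the $H^{1}$ bound on the surviving log-convolution terms is obtained. The paper differentiates once more and then performs an explicit pointwise splitting of the kernel (writing the full log as $\log(1/(4\sin^{2}(t/2))^{1/2})$ plus a bounded remainder, and similarly decomposing the rational kernel in $\partial J_{22}$), showing each piece lies in $L^{2}$. You instead invoke the order-$(-1)$ Fourier-multiplier property of the log-convolution directly. Both routes are valid; the paper's kernel-splitting is more elementary and self-contained, while your multiplier viewpoint is cleaner conceptually but, for the weighted (non-pure) convolutions with $\varphi(s,t)$, tacitly requires either a freeze-coefficient/commutator step or an integration by parts passing the extra $\partial_s$ onto $K'(\tau)=-\cot(\tau/2)$ and invoking $L^{2}$-boundedness of the periodic Hilbert transform. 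That is the only place where your sketch is thinner than the paper's computation; the claim itself is correct.
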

	   \begin{proof}
	   	Taking one more derivative to $J(f)$, we compute the most singular terms for each $J_i$ with $i=1,2,3$.\\
	   	1. The most singular terms for $\partial J_2$ are
	   	\begin{align*}
	   		\partial J_{21}&=\frac{\varepsilon u'(s)}{ R(s)}\int\!\!\!\!\!\!\!\!\!\; {}-{} \left(\frac{1}{\left| \left(R(s)-R(s-t)\right)^2+4R(s)R(s-t)\sin^2\left(\frac{t}{2}\right)\right|^{\frac{1}{2}}}\right)\\
	   		&\quad \times (\partial ^{k}u(s)-\partial ^{k}u(s-t))\cos(t) dt,
	   	\end{align*}
	   	\begin{align*}
	   		\partial J_{22}&=-\frac{\varepsilon u'(s)}{2 R(s)}\int\!\!\!\!\!\!\!\!\!\; {}-{}  \frac{R'(s-t)\sin(y)+[R(s)-R(s-t)]\cos(t)}{\left| \left(R(s)-R(s-t)\right)^2+4R(s)R(s-t)\sin^2\left(\frac{t}{2}\right)\right|}\\
	   		&\quad\times\Big( 2(R(s)-R(s-t))(\partial ^{k}u(s)-\partial ^{k}u(s-t)\Big.\\
	   		&\qquad\left.+4\left(\partial ^{k}u(s)R(s-t)+R(s)\partial ^{k}u(s-t)\right)\sin^2\left(\frac{t}{2}\right)\right)dt.
	   	\end{align*}
	   	The log term in $\partial J_{21}$ can be split into  $$\frac{1}{|\left(R(s)-R(s-t)\right)^2+4R(s)R(s-t)\sin^2\left(\frac{t}{2}\right)|^{\frac{1}{2}}}=\frac{1}{(R'(s)^2+R(s)^2)^{\frac{1}{2}}}\frac{1}{\left(4\sin^2\left(\frac{t}{2}\right)\right)^{\frac{1}{2}}}+\mathcal{L}_{21}(x,y),$$
	   	where $\mathcal{L}(s,t)$ is regular and belongs to $L^2$. Now $\partial J_{21}$ becomes
	   	\begin{align*}
	   		\partial J_{21}&=\frac{\varepsilon u'(s)}{R(s)}\int\!\!\!\!\!\!\!\!\!\; {}-{}  \log\left(\frac{1}{\left(4\sin^2\left(\frac{t}{2}\right)\right)^{\frac{1}{2}}}\right) (\partial ^{k}u(s)-\partial ^{k}u(s-t))dt\\
	   		&\quad+\frac{\varepsilon u'(s)}{R(s)}\int\!\!\!\!\!\!\!\!\!\; {}-{}  \log\left(\frac{1}{(R'(s)^2+R(s)^2)^{\frac{1}{2}}}\right) (\partial ^{k}u(s)-\partial ^{k}u(s-t))dt\\
	   		&\quad+\frac{\varepsilon u'(s)}{2\pi R(s)}\int_0^{2\pi} (\partial ^{k}u(s)-\partial ^{k}u(s-t))\mathcal{L}_{21}(s,t)\cos(t) dt.	
	   	\end{align*}
	   	Thus$||\partial J_{21}||_{L^2}\leq C||u||_{H^{k}}$.
	   	
	   	Similarly, by Taylor's expansion, we can divide the kernel in the first part of $\partial J_{22}$ into
	   	\begin{align*}
	   		&\quad \frac{(R'(s-t)\sin(y)+[R(s)-R(s-t)]\cos(y))(R(s)-R(s-t))}{\left| \left(R(s)-R(s-t)\right)^2+4R(s)R(s-t)\sin^2\left(\frac{t}{2}\right)\right|}\\
	   		&=\frac{\left(R'(s-t)+R'(s)\cos(t)\right)R'(s)}{R'(s)^2+R(s)^2}+{\mathcal{L}}_{22}(s,t),
	   	\end{align*}
	   	where $\mathcal{L}_{22}(s,t)$ is regular and belongs to $L^\infty$.  Thus, the first part of $\partial J_{22}$ belongs to $L^2$.
	   	As for the remaining part of $\partial J_{22}$, it is enough to notice that
	   	$$\left|\left|\frac{\left(R'(s-t)\sin(y)+[R(s)-R(s-t)]\cos(t)\right)\sin^2\left(\frac{t}{2}\right)}{\left| \left(R(s)-R(s-t)\right)^2+4R(s)R(s-t)\sin^2\left(\frac{t}{2}\right)\right|}\right|\right|_{L^\infty}\leq C. $$
	   	Thus, we conclude that $\partial J_{2}\in L^2$, which implies $J_2\in H^1$.\\
	   	2.  It is obvious that $ \partial J_3(u)\in L^2$ and hence $J_3\in H^1$.\\
	   	3. $$\partial J_1(f)=-\frac{(N-1)\gamma\pi}{N}\cdot\frac{\varepsilon^2u'(s)\partial ^ku(s)\cos(s)}{R^2(s)}+\mathrm{(lower \ order \ terms)}$$
	   	Obviously, $ \partial J_1(f)\in L^2$ and hence $J_1\in H^1$.
	   \end{proof}
	   
	   Combining the above lemmas, we obtain the following result of the regularity.
	   \begin{corollary}\label{coro4-5}
	   	If $u\in H^{k}$ solves $F(\varepsilon, \Omega, 1+\ep u(s))=0$ for some $\varepsilon$ small and $\gamma$, then $f\in H^{k+1}$ for any $k\geq 3$.
	   \end{corollary}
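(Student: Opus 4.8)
The plan is to run a bootstrap on the decomposition \eqref{4-3}: all the genuinely singular behaviour of the once-more-differentiated equation is carried by the linear operator $L$ of \eqref{4-4}, which is invertible by Lemma \ref{lem4-2}; the remainder $S$ of \eqref{4-5} has the same order of singularity but is small in $\varepsilon$ by Lemma \ref{lem4-3}; and the right-hand side $J$ of \eqref{4-6} is, by Lemma \ref{lem4-4}, one derivative smoother than it first appears. Gaining that extra derivative on $\partial^{k-1}u$ is the same as gaining one derivative on $u$, which is the assertion of the corollary (here $u$ is the $f$ of the statement; since the solution furnished by Theorem \ref{thm1} is cosine-only, $u\in H^k$ means $u\in X^k$, so Lemma \ref{lem4-4} applies).

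First I would make \eqref{4-3} precise as an identity in $L^2(\mathbb S^1)$. Differentiating $F(\varepsilon,\gamma,1+\varepsilon u)=0$ a total of $k-1$ times and regrouping the terms exactly as in \eqref{4-4}--\eqref{4-6} gives $L(\partial^{k-1}u)+S(\partial^{k-1}u)=J(u)$. Since $u\in H^k$ we have $\partial^{k-1}u\in H^1$ and $\partial^{k}u\in L^2$; the explicit Fourier description of $L$ in the proof of Lemma \ref{lem4-2} shows its symbol grows linearly in the frequency, so $L\colon H^1\to L^2$ is bounded (and injective), and running the estimate of Lemma \ref{lem4-3} one step lower in regularity (the same computation yields $\|S(\partial^{k-1}u)\|_{L^2}\le C(\varepsilon)\|\partial^{k-1}u\|_{H^1}$ with $C(\varepsilon)\to 0$) shows $S\colon H^1\to L^2$ is bounded with small norm. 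Hence both sides of the identity genuinely lie in $L^2$.

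Next I would promote $L$ to $L+S$. By Lemma \ref{lem4-2}, $L\colon H^2\to H^1$ is an isomorphism; writing $L+S=L(\mathrm{Id}+L^{-1}S)$ and using Lemma \ref{lem4-3} to get $\|L^{-1}S\|_{H^2\to H^2}\le \|L^{-1}\|_{H^1\to H^2}\,C(\varepsilon)<1$ for $\varepsilon$ small, a Neumann series shows $L+S\colon H^2\to H^1$ is an isomorphism, and the identical argument on the scale $H^1\to L^2$ (with the $L^2$-version of Lemma \ref{lem4-3}) shows $L+S$ is injective there. Now I invoke Lemma \ref{lem4-4}: $J(u)\in H^1$ — this is the step that exploits the regularity difference, since every term left in $J$ is either smooth in $s$ (the contributions of $F_2,F_3,F_4$ and the constant terms) or is a convolution of $\partial^{k}u\in L^2$ against the smoothing kernel $\log\big(1/(4\sin^2(t/2))^{1/2}\big)$, hence lands back in $H^1$. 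Setting $w:=(L+S)^{-1}J(u)\in H^2$, we have $(L+S)w=J(u)=(L+S)(\partial^{k-1}u)$ in $L^2$ with $w-\partial^{k-1}u\in H^1$; injectivity of $L+S$ on $H^1\to L^2$ forces $w=\partial^{k-1}u$, so $\partial^{k-1}u\in H^2$, i.e.\ $u\in H^{k+1}$.

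The step I expect to be the main obstacle is Lemma \ref{lem4-4}, i.e.\ the bookkeeping showing that after the extra differentiation every surviving term of $J$ — notably $\partial J_{21}$ and $\partial J_{22}$ — decomposes into the convolution of $\partial^{k}u\in L^2$ with the logarithmic kernel $\log\big(1/(4\sin^2(t/2))^{1/2}\big)$ plus a bounded (or $L^2$) remainder, while the terms built from $H(\cdot,\cdot)$ and from the background vorticity contribute only smooth functions of $s$; this is exactly what legitimates peeling off one derivative from the singular part and no more. Once the single gain $H^k\to H^{k+1}$ is in hand the argument is self-improving: $u\in H^{k+1}$ again lies in the relevant space $X^{k+1}$, so one re-runs the whole scheme with $k$ replaced by $k+1$, and since $k\ge 3$ was arbitrary this yields $u\in\bigcap_m H^m=C^\infty(\mathbb S^1)$, which is the regularity input needed for the curvature computation in Theorem \ref{thm2}.
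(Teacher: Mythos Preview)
Your proposal is correct and follows exactly the approach the paper intends: the paper states Corollary \ref{coro4-5} as an immediate consequence of Lemmas \ref{lem4-2}, \ref{lem4-3}, and \ref{lem4-4} applied to the decomposition \eqref{4-3}, without spelling out the Neumann-series/injectivity argument you give. Your write-up simply makes explicit the invertibility of $L+S$ and the identification $\partial^{k-1}u=(L+S)^{-1}J(u)$ that the paper leaves to the reader; the subsequent iteration to $C^\infty$ is precisely Theorem \ref{thm4-6}.
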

	   In view of the above result, by the bootstrap argument, we can eventually obtain the regularity of solution constructed in Section 2.1.
	   
	   \begin{theorem}\label{thm4-6}
	   	The solution $u$ to $F(\varepsilon, \gamma, 1+\ep u(s))=0$ is in $H^k$ for any $k\geq 3$ and hence $u(s)\in C^\infty$.
	   \end{theorem}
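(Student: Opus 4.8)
The plan is to run the bootstrap argument prepared by Corollary \ref{coro4-5}. Fix a small $\varepsilon\in[-\epsilon_0,\epsilon_0]$ and let $u=u_\varepsilon\in X^{3}$ be the solution of $\tilde F(\varepsilon,R)=0$, i.e. of $F(\varepsilon,\gamma,1+\varepsilon u)=0$ with $\gamma=\gamma(\varepsilon,R(s))=N\pi+O(\varepsilon)$, produced in the proof of Theorem \ref{thm1}; recall that $k=3$ is admissible there. The heart of the matter is the splitting \eqref{4-3} of $\partial^{k-1}\big(F(\varepsilon,\gamma,1+\varepsilon u)\big)=0$ into $L(\partial^{k-1}u)+S(\partial^{k-1}u)=J(u)$, together with the three facts already established: $L:H^{2}\to H^{1}$ is a linear isomorphism (Lemma \ref{lem4-2}); $S:H^{2}\to H^{1}$ has operator norm $C(\varepsilon)\to0$ as $\varepsilon\to0$ (Lemma \ref{lem4-3}); and $J(u)\in H^{1}$ whenever $u\in X^{k}$ (Lemma \ref{lem4-4}), the latter resting on the fact that the contributions $F_{2}$, $F_{3}$, $F_{4}$ and the ``regular'' pieces of $F_{1}$ are exactly one derivative smoother than the leading term encoded by $L$.

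First I would record the single-step gain, which is precisely Corollary \ref{coro4-5}: for $\varepsilon$ small enough that $C(\varepsilon)<\tfrac12\|L^{-1}\|^{-1}$, the operator $L+S$ is invertible on $H^{2}$ by a Neumann series, so from \eqref{4-3} and Lemma \ref{lem4-4} one gets $\partial^{k-1}u=(L+S)^{-1}J(u)\in H^{2}$, that is, $u\in H^{k+1}$. Crucially, the threshold on $\varepsilon$ and the constant in this bound do not depend on $k$: the operator $S$ always carries the same explicit power of $\varepsilon$ in front, and all its coefficients are controlled uniformly by $\|u\|_{X^{3}}<1$ on $V$, so a single $\epsilon_0$ works at every step of the iteration.

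Next I would iterate. Starting from $u\in H^{3}$, Corollary \ref{coro4-5} with $k=3$ gives $u\in H^{4}$; since $4\ge3$ we may apply it again to get $u\in H^{5}$, and by induction on $m$ we obtain $u\in H^{3+m}$ for every $m\ge0$. Hence $u\in\bigcap_{k\ge3}H^{k}(\mathbb S^{1})$. The one-dimensional Sobolev embedding $H^{k}(\mathbb S^{1})\hookrightarrow C^{k-1}(\mathbb S^{1})$ then forces $u\in C^{\infty}(\mathbb S^{1})$, so that $R(s)=1+\varepsilon u(s)\in C^{\infty}(\mathbb S^{1})$ and the patch boundary $\partial\Omega_{0}^{\varepsilon}$ is a smooth curve; the same reasoning applied componentwise to the vector-valued analogue of \eqref{4-3} yields the regularity claim in Theorem \ref{thm3}.

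The only genuinely delicate point is the one already discharged in Lemmas \ref{lem4-2}--\ref{lem4-4}, namely isolating $L$ as an invertible model operator whose singularity is strictly stronger than that of everything it is compared with; once that structural separation is in place the bootstrap is routine. The pitfall to watch in writing the argument cleanly is the uniformity in $k$ of the smallness constant $C(\varepsilon)$ in Lemma \ref{lem4-3}: one must check that taking $(k-1)$ derivatives before applying $S$ does not reintroduce factors growing with $k$, which holds here because $S$ acts on $\partial^{k-1}u$ only through at most two derivatives of the fixed profile $u$ together with an explicit $O(\varepsilon)$ prefactor.
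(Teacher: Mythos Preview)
Your proposal is correct and follows exactly the paper's approach: the paper's proof of Theorem \ref{thm4-6} is simply the one-line remark ``by the bootstrap argument'' applied to Corollary \ref{coro4-5}, and you have spelled out precisely that iteration (start at $u\in H^3$, apply Corollary \ref{coro4-5} repeatedly, then invoke Sobolev embedding). Your additional remarks on the $k$-independence of the smallness threshold $C(\varepsilon)$ are a welcome clarification of a point the paper leaves implicit.
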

	   
	   We end  this section by showing the convexity of $\Omega_0^\varepsilon$ by calculating the curvature.
	   \begin{theorem}\label{thm4-7}
	   	For $\varepsilon$ sufficiently small, $R(s)=1+\varepsilon u(s)$ parametrizes convex patches.
	   \end{theorem}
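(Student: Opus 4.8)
The plan is to verify convexity by computing the curvature of $\partial\Omega_0^\ep$ directly from its polar representation. Recall from Section~\ref{sec2} that, relative to its centre $\boldsymbol P_0=(d,h)$, the curve $\partial\Omega_0^\ep$ is $s\mapsto\bigl(\ep R(s)\cos s,\ep R(s)\sin s\bigr)$, i.e.\ a closed curve in polar form with radial function $r(s)=\ep R(s)=\ep\bigl(1+\ep u(s)\bigr)$ and polar angle $s\in\mathbb S^1$. By Theorem~\ref{thm4-6} the profile $u$ lies in $C^\infty(\mathbb S^1)$, so $r$ is smooth; moreover $u\in X^k$ with $\|u\|_{X^k}<1$ and $k\ge3$, so the embedding $H^k(\mathbb S^1)\hookrightarrow C^2(\mathbb S^1)$ yields a bound $\|u\|_{C^2(\mathbb S^1)}\le C$ independent of $\ep$. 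In particular $r(s)>0$ for $\ep$ small, and then $s\mapsto r(s)(\cos s,\sin s)$ is automatically an embedded, counterclockwise-oriented, simple closed $C^2$ curve.

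For such a polar curve the signed curvature is
\begin{equation*}
  \kappa(s)=\frac{r(s)^2+2r'(s)^2-r(s)r''(s)}{\bigl(r(s)^2+r'(s)^2\bigr)^{3/2}},
\end{equation*}
so that, since the common factor $\ep^2$ cancels, $\kappa(s)$ has the same sign as
\begin{equation*}
  Q(s):=R(s)^2+2R'(s)^2-R(s)R''(s)=1+\ep\bigl(2u(s)-u''(s)\bigr)+\ep^2\bigl(u(s)^2+2u'(s)^2-u(s)u''(s)\bigr),
\end{equation*}
where the second equality follows by substituting $R=1+\ep u$, $R'=\ep u'$, $R''=\ep u''$. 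Using the uniform bound $\|u\|_{C^2}\le C$, every term carrying a factor $\ep$ or $\ep^2$ is controlled by $C'\ep$ for $\ep\in(0,\ep_0)$ with $\ep_0\le1$, so that $Q(s)\ge 1-C'\ep\ge\tfrac12$ after shrinking $\ep_0$; hence $\kappa(s)>0$ for all $s\in\mathbb S^1$.

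Finally I would invoke the classical fact that a $C^2$ simple closed plane curve with everywhere strictly positive signed curvature (for the counterclockwise orientation induced here by increasing $s$) bounds a convex domain — equivalently, its unit tangent turns monotonically through total angle $2\pi$. This gives the convexity of $\Omega_0^\ep$; since $\Omega_n^\ep=\Omega_0^\ep+\tfrac{2\pi n}{N}\boldsymbol e_1$ is a rigid translate, each $\Omega_n^\ep$ is convex as well. I do not foresee a substantive obstacle: the essential inputs are the $C^2$ regularity supplied by Theorem~\ref{thm4-6} (needed even to make sense of the curvature) and the smallness of $\ep\|u\|_{C^2}$, both already in hand. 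The only point deserving a line of care is confirming $r(s)=\ep\bigl(1+\ep u(s)\bigr)>0$, which guarantees that the polar parametrization is genuinely embedded, so that the curvature criterion for convexity applies.
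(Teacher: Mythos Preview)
Your proof is correct and follows essentially the same approach as the paper: both compute the signed curvature of the polar curve $s\mapsto \ep R(s)(\cos s,\sin s)$, observe that the numerator $R^2+2(R')^2-RR''=1+O(\ep)$ and the denominator $(R^2+(R')^2)^{3/2}=1+O(\ep)$, and conclude positivity of the curvature for $\ep$ small. Your write-up is in fact more careful than the paper's, explicitly invoking the $C^2$ bound from Theorem~\ref{thm4-6}, checking that $r(s)>0$ so the curve is embedded, and stating the convexity criterion being used.
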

	   \begin{proof}
	   	We will prove this fact by showing that the sign of the curvature $\boldsymbol \kappa$ at any given $s\in[0,2\pi)$ is positive. Indeed, by direct computation we have
	   	\begin{align*}
	   		\varepsilon \boldsymbol\kappa(s)=\frac{R(s)^2+2R'(s)^2-R(s)R''(s)}{\left(R(s)^2+R'(s)^2\right)^{\frac{3}{2}}}=\frac{1+O(\varepsilon)}{1+O(\varepsilon)}>0
	   	\end{align*}
	   	for $\varepsilon$ small, which implies the convexity.
	   \end{proof}
	   
	   From Theorem \ref{thm4-6} and Theorem \ref{thm4-7}, we can immediately derive Theorem \ref{thm2}.
	   
	   \bigskip
		
	   \section{Construction for the general case}\label{sec5}
	   
	   According to the discussion in Section \ref{sec2}, for fixed $(\kappa_1,\cdots,\kappa_N)\in\mathbb R^N$, if the location coordinates 
	   $$(\boldsymbol \nu_1^0,\cdots,\boldsymbol \nu_N^0)=\left(e^{\mathbf i(x_{11}^0+x_{12}^0\mathbf i)},\cdots e^{\mathbf i (x_{N1}^0+x_{N2}^0\mathbf i)}\right)$$
	   satisfies \eqref{2-10}, then there exists a general equilibrium state of point vortices to \eqref{1-1} of the form
	   \begin{equation*}
	   	\omega^*(\boldsymbol x)=\sum_{n=1}^N\kappa_n\boldsymbol \delta_{\boldsymbol x_n^0}-\sum_{n=1}^N\kappa_n
	   \end{equation*}
	   with $\boldsymbol x_n^0=(x_{n1}^0,x_{n2}^0)\in \mathbb{T}^2$. In this section, we  prove a general existence result, where the construction of patch solutions in \eqref{1-6} follows the same idea as in Theorem \ref{thm1}, but here we will adjust the center locations of these patches instead of the background vorticity. 
	   Note that in Theorem \ref{thm3}, besides a two-dimensional degenerate condition 
	   $$\mathrm{Rank}\,\left(\mathrm{Hess}(\mathcal W_N(\boldsymbol X^0))\right)=2N-2$$
	   is assumed on the Kirchhoff-Routh path function $\mathcal W_N$ at $\boldsymbol X^0=(\boldsymbol x_1^0,\cdots,\boldsymbol x_N^0)$, the location vector $\boldsymbol X^0$ is required to be centralized, that is to say, 
	   \begin{equation*}
	   	\sum\limits_{n=1}^Nx^0_{n1}=N\pi,  \quad\quad \sum\limits_{n=1}^Nx^0_{n2}=-\frac{N\log\rho}{2}.
	   \end{equation*}
	   These conditions will be used in Lemma \ref{lem5-2} to obtain the existence of the centralized location series $\mathbf X^\ep=(\boldsymbol x_1^\ep,\cdots,\boldsymbol x_N^\ep)$ as the center of $\Omega_1^\ep, \cdots,\Omega^\ep_N$ near $\boldsymbol{X^0}$.

	   To derive the contour dynamic equation for all $N$ patches, we let the $n^{\mathrm{th}}$ patch $\Omega_n^\ep$ be parameterized by 
	   $$(x_{n1}^\ep+\ep R_n(s)\cos(s), x_{n2}^\ep+\ep R_n(s)\sin(s))$$
	   with $R(s):\mathbb S^1\to\mathbb R$. However, since the shape of $\Omega_1^\ep, \cdots,\Omega^\ep_N$ can be different, we use the vector function
	   $$\boldsymbol R(s)=(R_1(s),\cdots,R_N(s)):\mathbb S^N \to\mathbb R^N$$
	   to denote all the boundary curves. By the 2D Euler equation \eqref{1-1}, the velocity field and the normal vector on patch boundary should satisfy
	   \begin{equation*}
	   	\boldsymbol{v}(\boldsymbol x)\cdot \mathbf n(\boldsymbol x)=0, \ \ \ \forall \, \boldsymbol x \in \cup_{n=1}^{N}\partial \Omega^\varepsilon_n,
	   \end{equation*}
	   from which we can use the Biot-Savart law to derive $N$ contour dynamic equations for $N$ patches:
	   \begin{align}\label{5-1}
	   	0&= F_m(\varepsilon, \mathbf X^\ep, \boldsymbol R(s))\\
	   	&=\frac{\kappa_m}{2\pi\ep R_m(s)}\int\!\!\!\!\!\!\!\!\!\; {}-{} \log\left(\frac{1}{ \left(R_m(s)-R_m(t)\right)^2+4R_m(s)R_m(t)\sin^2\left(\frac{s-t}{2}\right)}\right)\nonumber\\
	   	&\quad\times \left[(R_m(s)R_m(t)+R'_m(s)R'_m(t))\sin(s-t)+(R_m(s)R'_m(t)-R'_m(s)R_m(t))\cos(s-t)\right] dt\nonumber\\
	   	&+ \sum_{n=1,n\neq m}^{N} \frac{\kappa_n}{ \ep\pi R_m(s)} \int\!\!\!\!\!\!\!\!\!\; {}-{}\log\left( \frac{1}{\left| \boldsymbol x_m^\ep +\boldsymbol x(s)-\boldsymbol x_n^\ep-\boldsymbol x(t)\right|}\right)\left[(R_m(s)R_n(t)+R'_m(s)R'_n(t))\sin(s-t)\right.\nonumber\\
	   	&\quad \left. +(R_m(s)R'_n(t)-R'_m(s)R_n(t))\cos(s-t)\right] dt\nonumber\\
	   	&+\sum_{n=1}^{N} \frac{2\kappa_n}{\ep R_m(s)} \int\!\!\!\!\!\!\!\!\!\; {}-{} H(\boldsymbol x_m^\ep+\boldsymbol x(s),\boldsymbol x_n^\ep+\boldsymbol x(t))\left[(R_m(s)R_n(t)+R'_m(s)R'_n(t))\sin(s-t)\right.\nonumber\\
	   	&\quad \left. +(R_m(s)R'_n(t)-R'_m(s)R_n(t))\cos(s-t)\right] dt\nonumber\\
	   	&+\sum_{n=1}^{N}\kappa_n\varepsilon R'(s)-\left(\sum_{n=1,n\neq m}^N\kappa_n(x_{m1}^\ep-x_{n1}^\ep)\right)\left(\frac{R'(s)\cos(s)}{R(s)}-\sin(s)\right)\nonumber\\
	   	&\quad-\left(\sum_{n=1,n\neq m}^N\kappa_n(x_{m2}^\ep-x_{n2}^\ep)\right)\left(\frac{R'(s)\sin(s)}{R(s)}+\cos(s)\right), \quad\quad\quad m=1,\cdots,N\nonumber
	   \end{align}
	   for $N$ patches. Our task is to find a family of the function vector $\boldsymbol R(s)$ satisfying the integro-differential system \eqref{5-1}, which is also written down into the following vector form.
	   $$\boldsymbol F(\varepsilon, \mathbf X^\ep, \boldsymbol R(s))=\boldsymbol 0.$$
	   Similar to the single-layered case, we let
	   $$R_n(s)=1+\ep u_n(s).$$
	   But in this situation, due to the lack of symmetry in the $x_2$-direction, the Hilbert space for $u_n(s)$ is chosen to be
	   \begin{equation*}
	   	Z^{k}=\left\{ u\in H^{k}(\mathbb S^1), \ u(s)= \sum\limits_{j=1}^{\infty}\big[a_j\sin(js)+b_j\cos(js)\big]\right\},
	   \end{equation*}
	   and the norms $\|\cdot\|_{Z^k}$ for $u$ in $Z^k$ is the Sobolev $H^k$-norm on $\mathbb S^1:[0,2\pi)$ given by
	   \begin{equation*}
	   	\|u\|_{H^k(\mathbb S^1)}=\left(\sum_{j=1}^{\infty}\big[|a_j|^2+|b_j|^2\big]|j|^{2k}\right)^{\frac{1}{2}}.
	   \end{equation*}
	   For the application of implicit function theorem, we also let
	   \begin{equation*}
	   	Z^{k}_0=\left\{ u\in H^{k}(\mathbb S^1), \ u(s)= \sum\limits_{j=2}^{\infty}\big[a_j\sin(js)+b_j\cos(js)\big]\right\},
	   \end{equation*}
	   whose coefficients of first terms in Fourier series is absent. By denoting $\boldsymbol u=(u_1,\cdots,u_N)$ and
	   $$V:=\{ \boldsymbol R(s)=(1+\ep u_1,\cdots,1+\ep u_N) \mid \boldsymbol u\in (X^k)^N, \|u_m\|_{X^k}<1, m=1,\cdots, N\},$$ 
	   we can use a similar technique as in Lemma \ref{lem3-1} and Lemma \ref{lem3-2} to verify
	   \begin{itemize}
	   	\item[(i)] For $m=1,\cdots,N$, $F_m(\varepsilon, \boldsymbol X^\ep, \boldsymbol R(s)): \left(-\varepsilon_0, \varepsilon_0\right)\times (\mathbb{T}^2)^N \times V \rightarrow Z^{k-1}$ is continuous.
	   	\item[(ii)] For $1\le n,m\le N$, the Gateaux derivative
	   	$\partial_{u_n} F_m(\varepsilon, \boldsymbol X^\ep, \boldsymbol R(s)): \left(-\varepsilon_0, \varepsilon_0\right)\times (\mathbb{T}^2)^N \times V \rightarrow L(Z^{k}, Z^{k-1})$ is continuous.
	   \end{itemize}	
	   
	   Then we calculate the linearization at $(0, \boldsymbol X^\ep, (1,\cdots,1))$: For $v\in Z^k$, it holds
	   \begin{equation}\label{}
	   	\begin{split}
	   		&\partial_{u_m} F_m(0, \boldsymbol X^\ep, (1,\cdots,1))v_m\\
	   		&=\frac{1}{2}\int\!\!\!\!\!\!\!\!\!\; {}-{}\log\left(\frac{1}{4\sin^2\left(\frac{t}{2}\right)}\right) \left[v_m(s-t)\sin(y)+(v_m'(s-t)-v_m'(s))\cos(t)\right] dt\\
	   		&-\frac{1}{2}\int\!\!\!\!\!\!\!\!\!\; {}-{} \sin(t)v_m(s-t)dt,
	   	\end{split}
	   \end{equation}
	   and	$\partial_{u_n} F_m(0, \boldsymbol X^\ep, (1,\cdots,1))v_n=0$ for $n\neq m$. According to the calculation in Lemma \ref{lem3-3}, we have following analogy for Fourier representation of $\partial_{u_m} F_m(0, \boldsymbol X^\ep, (1,\cdots,1))$.
	   \begin{lemma}\label{lem5-1}
	   	Let $v_m(s)=\sum_{j=1}^\infty \big[a_j\sin(js)+b_j\cos(js)\big]$ be in $Z^k$, then it holds
	   	$$\partial_{u_m}F (0, \boldsymbol X^\ep,(1,\cdots,1))v_m=\sum_{j=1}^\infty \frac{j-1}{2}b_j\sin(js)-\sum_{j=1}^\infty \frac{j-1}{2}a_j\cos(js).$$
	   	Moreover, for each $\boldsymbol X^\ep\in (\mathbb{T}^2)^N$, $\partial_{u_m}F_m (0, \boldsymbol X^\ep,(1,\cdots,1)): Z^k\to Z_0^{k-1}$ is an isomorphism.
	   \end{lemma}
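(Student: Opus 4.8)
The plan is to reduce the statement to the scalar computation of Lemma~\ref{lem3-3}, extended to the odd Fourier modes. The displayed formula for $\partial_{u_m}F_m(0,\boldsymbol X^\ep,(1,\cdots,1))$ has exactly the same shape as $\partial_u F(0,\gamma,1)$ in \eqref{3-5}; the only new feature is that a test function $v_m\in Z^k$ now also carries a $\sin(js)$-part. Since the operator is linear and bounded on $Z^k$, it suffices to evaluate it on the two families $\cos(js)$ and $\sin(js)$, $j\ge 1$, and to read off the Fourier multiplier; the general formula then follows by linearity.

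For $v_m(s)=\cos(js)$ the computation is verbatim that of Lemma~\ref{lem3-3}: expand the brackets by the product-to-sum formulas and use the identities $\int\!\!\!\!\!\!\!\!\!\; {}-{}\cos(mt)\log\bigl(\frac{1}{4\sin^2(t/2)}\bigr)\,dt=\frac1{|m|}$ ($m\neq0$) and $\int\!\!\!\!\!\!\!\!\!\; {}-{}\sin(mt)\log\bigl(\frac{1}{4\sin^2(t/2)}\bigr)\,dt=0$, which gives $\partial_{u_m}F_m(0,\boldsymbol X^\ep,(1,\cdots,1))\cos(js)=\frac{j-1}{2}\sin(js)$. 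For $v_m(s)=\sin(js)$ I would insert $v_m(s-t)=\sin(js)\cos(jt)-\cos(js)\sin(jt)$, $v_m'(s-t)=j\bigl(\cos(js)\cos(jt)+\sin(js)\sin(jt)\bigr)$ and $v_m'(s)=j\cos(js)$ into \eqref{3-5}, and then regroup the integrand by its $\sin(js)$- and $\cos(js)$-factors. The $\sin(js)$-factor becomes $\frac{j+1}{2}\sin((j+1)t)+\frac{j-1}{2}\sin((j-1)t)$, which is annihilated by the second identity; the $\cos(js)$-factor becomes $\frac{j+1}{2}\cos((j+1)t)+\frac{j-1}{2}\cos((j-1)t)-j\cos t$, whose average against $\log\bigl(\frac{1}{4\sin^2(t/2)}\bigr)$ equals $\frac12+\frac12-j=1-j$, so the first integral in \eqref{3-5} contributes $-\frac{j-1}{2}\cos(js)$. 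The leftover term $-\frac12\int\!\!\!\!\!\!\!\!\!\; {}-{}\sin(t)\,v_m(s-t)\,dt$ vanishes for $j\ge2$ and, for $j=1$, supplies exactly the term cancelling the residual $\cos s$; hence $\partial_{u_m}F_m(0,\boldsymbol X^\ep,(1,\cdots,1))\sin(js)=-\frac{j-1}{2}\cos(js)$ for every $j$, and the announced formula follows.

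For the isomorphism, the Fourier formula exhibits $\partial_{u_m}F_m(0,\boldsymbol X^\ep,(1,\cdots,1))$ as the diagonal map $\cos(js)\mapsto\frac{j-1}{2}\sin(js)$, $\sin(js)\mapsto-\frac{j-1}{2}\cos(js)$; the two $j=1$ modes are sent to $0$, so the range sits in $Z_0^{k-1}$. Given $p(s)=\sum_{j\ge2}\bigl(c_j\sin(js)+d_j\cos(js)\bigr)\in Z_0^{k-1}$, the explicit preimage $v_m(s)=\sum_{j\ge2}\frac{2}{j-1}\bigl(c_j\cos(js)-d_j\sin(js)\bigr)$ obeys $\|v_m\|_{Z^k}^2=\sum_{j\ge2}\frac{4j^{2k}}{(j-1)^2}\bigl(c_j^2+d_j^2\bigr)\le C\sum_{j\ge2}\bigl(c_j^2+d_j^2\bigr)j^{2k-2}\le C\|p\|_{Z_0^{k-1}}^2$, so $v_m\in Z^k$; this is a bounded right inverse, and exactly as in Lemma~\ref{lem3-3} one concludes that $\partial_{u_m}F_m(0,\boldsymbol X^\ep,(1,\cdots,1))\colon Z^k\to Z_0^{k-1}$ is the desired isomorphism. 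Its two-dimensional kernel, spanned by $\sin s$ and $\cos s$, is the infinitesimal version of the translational invariance of $\mathbb{T}^2$ and is precisely what gets absorbed when the centering vector $\boldsymbol X^\ep$ is adjusted in Lemma~\ref{lem5-2}.

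The only genuine work is the $\sin(js)$ computation — keeping the product-to-sum bookkeeping straight and checking that the case $j=1$ closes thanks to the $-\frac12\int\!\!\!\!\!\!\!\!\!\; {}-{}\sin(t)\,v_m(s-t)\,dt$ term, the same cancellation of the first Fourier coefficient as in Lemma~\ref{lem3-3}. I would also recall, as the paragraph before the lemma already states, that the displayed diagonal linearization — and the vanishing $\partial_{u_n}F_m(0,\boldsymbol X^\ep,(1,\cdots,1))=0$ for $n\neq m$ — rests on the fact that, once the $1/\ep$ prefactor in $F_m$ is absorbed using $\int\!\!\!\!\!\!\!\!\!\; {}-{}\sin(s-t)\,dt=0$, the cross-interaction, Robin, and background contributions reduce at $\ep=0$ to $s$-averages that vanish by the same mode-by-mode checks as in Section~\ref{sec3}, so that only the self-interaction term survives.
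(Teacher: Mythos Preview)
Your proposal is correct and follows exactly the route the paper indicates: the paper does not give a separate proof of Lemma~\ref{lem5-1} but merely points to ``the calculation in Lemma~\ref{lem3-3}'' as an analogy, and you have faithfully carried out that analogy, supplying the explicit $\sin(js)$ computation and the right-inverse construction that the paper leaves implicit. Your mode-by-mode bookkeeping (including the $j=1$ cancellation via the second integral) is accurate and matches the cosine case in Lemma~\ref{lem3-3} line for line.
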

	   
	   Using Taylor's formula, we expand $F$ at $(0, \boldsymbol X^\ep,(1,\cdots,1))$ as
	   \begin{align}\label{5-3}
	   	&\quad F_m(\varepsilon, \boldsymbol X^\ep(\varepsilon,\boldsymbol R(s)), \boldsymbol R(s))=\partial_{u_m}F_m (0, \boldsymbol X^\ep,(1,\cdots,1))u_m\\
	   	&\quad+\left[-\sum_{n=1,n\neq m}^N\frac{\kappa_n}{2\pi}\frac{1}{|x_{m1}^\ep-x_{n1}^\ep|}+\sum_{n=1}^{N}\kappa_n\partial_{11}H(\boldsymbol x_m^\ep,\boldsymbol x_n^\ep)+\sum_{n=1,n\neq m}^N\kappa_n(x_{m1}^\ep-x_{n1}^\ep)\right]\sin(s)\nonumber\\
	   	&\quad+\left[-\sum_{n=1,n\neq m}^N\frac{\kappa_n}{2\pi}\frac{1}{|x_{m2}^\ep-x_{n2}^\ep|}+\sum_{n=1}^{N}\kappa_n\partial_{12}H(\boldsymbol x_m^\ep,\boldsymbol x_n^\ep)-\sum_{n=1,n\neq m}^N\kappa_n(x_{m1}^\ep-x_{n1}^\ep)\right]\cos(s)\nonumber\\
	   	&\quad+\ep\mathcal R(\ep,\boldsymbol X^\ep, \boldsymbol R(s))\nonumber
	   \end{align}
	   with $\ep \mathcal R(\ep,\boldsymbol X^\ep, \boldsymbol R(s))$ a small. Our final task is to find a family of $\boldsymbol X^\ep(\varepsilon, \boldsymbol R(s))$ such that image of $F_m(\varepsilon, \boldsymbol X^\ep(\varepsilon,\boldsymbol R(s)), \boldsymbol R(s))$ is in $Z_0^{k-1}$.
	   
	   \begin{lemma}\label{lem5-2}
	   	There exists
	   	\begin{equation*}
	   		\boldsymbol X^\ep(\varepsilon, \boldsymbol R(s)):=\boldsymbol X^0+\ep \tilde{\boldsymbol X}(\ep, \boldsymbol R(s))
	   	\end{equation*}
	   	with $\tilde{\boldsymbol X}(\ep, \boldsymbol R(s)):(-\ep_0,\ep_0)\times V\to (\mathbb{T}^2)^N $, such that for $m=1,\cdots,N$, $\tilde F_m(\varepsilon,\boldsymbol R(s)):(-\ep_0,\ep_0)\times V\to Z_0^{k-1}$ is given by
	   	\begin{equation*}
	   		\tilde F_m(\varepsilon,\boldsymbol R(s)):=F_m(\varepsilon, \boldsymbol X^\ep(\varepsilon,\boldsymbol R(s)), \boldsymbol R(s)).
	   	\end{equation*}
	   	Moreover, for $m=1,\cdots,N$, $\partial_{u_m}\tilde{\boldsymbol X}(\ep, \boldsymbol R(s)): Z^k\to (\mathbb{T}^2)^N$ is continuous.
	   \end{lemma}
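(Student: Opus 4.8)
\smallskip

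The plan is to convert the requirement ``$\tilde F_m(\varepsilon,\boldsymbol R(s))\in Z_0^{k-1}$ for all $m$'' into a finite‑dimensional system for the centre vector $\boldsymbol X^\varepsilon$ and solve it by the implicit function theorem, exactly in the spirit of Lemma~\ref{lem4-1}. Since a function on $\mathbb S^1$ lies in $Z_0^{k-1}$ precisely when its first Fourier harmonics vanish, I collect the $\sin(s)$‑ and $\cos(s)$‑coefficients of $F_1,\dots,F_N$ into one map
$$\boldsymbol\Phi(\varepsilon,\boldsymbol X,\boldsymbol R(s))=\big(\Phi_1^{\sin},\Phi_1^{\cos},\dots,\Phi_N^{\sin},\Phi_N^{\cos}\big)\in\mathbb R^{2N},$$
so the task becomes: find $\boldsymbol X^\varepsilon(\varepsilon,\boldsymbol R(s))$ with $\boldsymbol\Phi(\varepsilon,\boldsymbol X^\varepsilon,\boldsymbol R(s))=\boldsymbol 0$. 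From the expansion \eqref{5-3} and the analogues of Lemmas~\ref{lem3-1}--\ref{lem3-2}, $\boldsymbol\Phi$ is continuous and continuously differentiable in $\boldsymbol X$; at $\varepsilon=0$ it depends only on $\boldsymbol X$ (then $R_m\equiv1$); and $\boldsymbol\Phi(0,\boldsymbol X,\cdot)$ coincides, block by block and up to a fixed invertible $2\times2$ rescaling of each $\mathbb R^2$ factor (the weight $1/\kappa_m$ and the rotation in the Biot--Savart law), with the gradient $\nabla\mathcal W_N(\boldsymbol X)$ of the Kirchhoff--Routh function \eqref{1-4}. In particular $\boldsymbol\Phi(0,\boldsymbol X^0,\cdot)=\boldsymbol 0$ because $\boldsymbol X^0$ is a critical point of $\mathcal W_N$ (see \eqref{2-10}), and $\partial_{\boldsymbol X}\boldsymbol\Phi(0,\boldsymbol X^0,\cdot)$ equals $\mathrm{Hess}(\mathcal W_N)(\boldsymbol X^0)$ precomposed with that fixed invertible block‑diagonal map.

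Next I handle the degeneracy with the centralized hypothesis. Translation invariance of $\mathcal W_N$ shows $\boldsymbol\tau_1:=(\boldsymbol e_1,\dots,\boldsymbol e_1)$ and $\boldsymbol\tau_2:=(\boldsymbol e_2,\dots,\boldsymbol e_2)$ lie in $\ker\mathrm{Hess}(\mathcal W_N)(\boldsymbol X^0)$, and the rank hypothesis of Theorem~\ref{thm3} forces $\ker\mathrm{Hess}(\mathcal W_N)(\boldsymbol X^0)=\mathrm{span}\{\boldsymbol\tau_1,\boldsymbol\tau_2\}$; by symmetry its range is $W:=\{\boldsymbol\tau_1,\boldsymbol\tau_2\}^{\perp}$. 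Transporting through the block rescaling, $\partial_{\boldsymbol X}\boldsymbol\Phi(0,\boldsymbol X^0,\cdot)$ has kernel $\mathrm{span}\{\boldsymbol\tau_1,\boldsymbol\tau_2\}$ and range a fixed $(2N-2)$‑dimensional subspace $W_\kappa=\{\boldsymbol Y:\sum_m\kappa_mY_{m,1}=\sum_m\kappa_mY_{m,2}=0\}$. The centralized vectors through $\boldsymbol X^0$ form an affine submanifold $\mathcal C\subset(\mathbb T^2)^N$ whose tangent space is precisely $W$, and since $\boldsymbol\tau_1\perp\boldsymbol\tau_2$ this $W$ is a complement of $\ker\partial_{\boldsymbol X}\boldsymbol\Phi(0,\boldsymbol X^0,\cdot)$; hence $\partial_{\boldsymbol X}\boldsymbol\Phi(0,\boldsymbol X^0,\cdot)\big|_{W}:W\to W_\kappa$ is injective between spaces of equal dimension, so an isomorphism. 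Composing $\boldsymbol\Phi$ with a fixed linear projection $P:\mathbb R^{2N}\to W_\kappa$, the map $(\varepsilon,\boldsymbol X,\boldsymbol R(s))\mapsto P\,\boldsymbol\Phi$ on $(-\varepsilon_0,\varepsilon_0)\times\mathcal C\times V$ vanishes at $(0,\boldsymbol X^0,\cdot)$ with invertible derivative in $\boldsymbol X$ along $W$, so the implicit function theorem gives $\varepsilon_0>0$ and a continuous $\boldsymbol X^\varepsilon(\varepsilon,\boldsymbol R(s))\in\mathcal C$ with $\boldsymbol X^\varepsilon(0,\cdot)=\boldsymbol X^0$ and $P\,\boldsymbol\Phi(\varepsilon,\boldsymbol X^\varepsilon,\boldsymbol R(s))=\boldsymbol 0$. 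Writing $\boldsymbol X^\varepsilon=\boldsymbol X^0+\varepsilon\tilde{\boldsymbol X}(\varepsilon,\boldsymbol R(s))$ and using the smooth dependence of $\boldsymbol\Phi$ on $\boldsymbol R(s)$, the continuity of $\tilde{\boldsymbol X}$ and of $\partial_{u_m}\tilde{\boldsymbol X}$ follow just as in Lemma~\ref{lem4-1}.

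The remaining point, which I expect to be the heart of the argument, is to upgrade $P\,\boldsymbol\Phi=\boldsymbol 0$ to $\boldsymbol\Phi=\boldsymbol 0$, i.e.\ to show that the two scalar conditions $\boldsymbol\Phi\in W_\kappa$ hold automatically for the full nonlinear system and not merely at $\varepsilon=0$. This is the $N$‑patch analogue of the computation \eqref{2-11}--\eqref{2-12} that makes the $\sin(s)$‑coefficient of the single‑layer equation vanish at $\gamma=N\pi$, and it is the conservation of the total $\kappa$‑weighted linear impulse of the configuration: weighting the first harmonics of the equations \eqref{5-1} by $\kappa_m$ and summing over $m$, the mutual‑interaction integrals cancel in pairs because the kernel $\log\frac{1}{|\boldsymbol x_m^\varepsilon+\boldsymbol x(s)-\boldsymbol x_n^\varepsilon-\boldsymbol x(t)|}$ is symmetric under $(m,s)\leftrightarrow(n,t)$, the regular‑part integrals cancel by $H(\boldsymbol x,\boldsymbol y)=H(\boldsymbol y,\boldsymbol x)$, and the linear terms carrying the background vorticity in \eqref{5-1} are arranged so that their $\kappa$‑weighted sum cancels as well. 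Verifying these cancellations while carrying along all the $\boldsymbol x(s),\boldsymbol x(t)$ shifts is the one genuinely computational step; the degeneracy bookkeeping above and the use of the implicit function theorem are then routine. Once $\mathrm{Im}\,\boldsymbol\Phi\subseteq W_\kappa$ is known, $P\,\boldsymbol\Phi(\varepsilon,\boldsymbol X^\varepsilon,\boldsymbol R(s))=\boldsymbol 0$ yields $\boldsymbol\Phi(\varepsilon,\boldsymbol X^\varepsilon,\boldsymbol R(s))=\boldsymbol 0$, hence $\tilde F_m(\varepsilon,\boldsymbol R(s))=F_m(\varepsilon,\boldsymbol X^\varepsilon(\varepsilon,\boldsymbol R(s)),\boldsymbol R(s))\in Z_0^{k-1}$ for every $m$, which is the assertion of the lemma.
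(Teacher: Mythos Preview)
Your overall strategy is the paper's own: collect the first harmonics of the $F_m$ into a finite–dimensional map, identify its $\varepsilon=0$ part with (a block rescaling of) $\nabla\mathcal W_N$, and invoke the rank hypothesis on $\mathrm{Hess}(\mathcal W_N)(\boldsymbol X^0)$ together with the centralized constraint to run the implicit function theorem. You actually spell out far more than the paper does—the paper compresses all of your second paragraph into the single sentence ``it can be deduced immediately that \eqref{5-4}, \eqref{5-5} and the centralized condition are solvable for $\boldsymbol X^\varepsilon$ using the two--dimensional degenerate condition.'' In particular, you correctly observe that this sentence hides a nontrivial point: $2N$ first--harmonic equations together with two centralized constraints are $2N+2$ conditions on $2N$ unknowns, so two of the former must be redundant. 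Your formulation of this as $\mathrm{Im}\,\boldsymbol\Phi\subset W_\kappa$ is the right way to isolate what is missing.

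Where your argument does not close is the verification of $\mathrm{Im}\,\boldsymbol\Phi\subset W_\kappa$ for $\varepsilon\neq 0$. The pairwise cancellation you sketch fails because the Fourier weight and the $1/R_m(s)$ prefactor in \eqref{5-1} are not symmetric under $(m,s)\leftrightarrow(n,t)$. Concretely, writing $\mathrm{bracket}_{mn}(s,t)=\varepsilon^{-2}\,\gamma_m'(s)^{\perp}\!\cdot\gamma_n'(t)$ for the expression in square brackets in \eqref{5-1}, one has $\mathrm{bracket}_{mn}(s,t)=-\mathrm{bracket}_{nm}(t,s)$, and after relabelling the $(n,m)$ term the sum of the $(m,n)$ and $(n,m)$ mutual contributions to $\sum_m\kappa_m\Phi_m^{\sin}$ is proportional to
\[
\int\!\!\!\!\!\!\!\!\!\; {}-{}\int\!\!\!\!\!\!\!\!\!\; {}-{}\Big[\tfrac{\sin s}{R_m(s)}-\tfrac{\sin t}{R_n(t)}\Big]\,\log\tfrac{1}{|\gamma_m(s)-\gamma_n(t)|}\;\mathrm{bracket}_{mn}(s,t)\,dt\,ds,
\]
which does not vanish for generic $R_m\neq R_n$. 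The same asymmetry obstructs the corresponding cancellation for the $H$--terms, the self--interaction, and the background pieces. So symmetry of the kernel alone does not give $\sum_m\kappa_m\Phi_m=0$; if this identity holds it must come from a more structural argument (for instance, rewriting $F_m$ as a tangential derivative of the stream function along $\partial\Omega_m^\varepsilon$ and invoking incompressibility on $\mathbb T^2$). As written, this is a genuine gap in your proposal—and, to be fair, the paper's own proof does not supply this step either.
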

	   \begin{proof}
	   	It suffices to find $\boldsymbol X^\ep(\ep, \boldsymbol R(s)):(-\ep_0,\ep_0)\times V\to (\mathbb{T}^2)^N$, such that the first Fourier coefficients of $\sin(s)$ and $\cos(s)$ vanishes in $F_m(\varepsilon, \boldsymbol X^\ep(\varepsilon,\boldsymbol R(s)), \boldsymbol R(s))$ for $m=1,\cdots, N$. By \eqref{5-3}, this condition is equivalent to the $2N$ algebraic equations
	   	\begin{equation}\label{5-4}
	   		\sum_{n=1,n\neq m}^N\frac{\kappa_n}{2\pi}\frac{1}{|x_{m1}^\ep-x_{n1}^\ep|}-\sum_{n=1}^{N}\kappa_n\partial_{11}H(\boldsymbol x_m^\ep,\boldsymbol x_n^\ep)-\sum_{n=1,n\neq m}^N\kappa_n(x_{m1}^\ep-x_{n1}^\ep)=2\ep\int\!\!\!\!\!\!\!\!\!\; {}-{} \mathcal R\sin(s)ds,
	   	\end{equation}
	   	and
	   	\begin{equation}\label{5-5}
	   		\sum_{n=1,n\neq m}^N\frac{\kappa_n}{2\pi}\frac{1}{|x_{m2}^\ep-x_{n2}^\ep|}-\sum_{n=1}^{N}\kappa_n\partial_{12}H(\boldsymbol x_m^\ep,\boldsymbol x_n^\ep)+\sum_{n=1,n\neq m}^N\kappa_n(x_{m1}^\ep-x_{n1}^\ep)=2\ep\int\!\!\!\!\!\!\!\!\!\; {}-{} \mathcal R\cos(s)ds,
	   	\end{equation}
	   	for $m=1,\cdots, N$ holding simultaneously. Since we already have
	   	\begin{equation*}
	   		\sum_{n=1,n\neq m}^N\frac{\kappa_n}{2\pi}\frac{1}{|x_{m1}^0-x_{n1}^0|}-\sum_{n=1}^{N}\kappa_n\partial_{11}H(\boldsymbol x_m^0,\boldsymbol x_n^0)-\sum_{n=1,n\neq m}^N\kappa_n(x_{m1}^0-x_{n1}^0)=0,
	   	\end{equation*}
	   	and
	   	\begin{equation*}
	   		\sum_{n=1,n\neq m}^N\frac{\kappa_n}{2\pi}\frac{1}{|x_{m2}^0-x_{n2}^0|}-\sum_{n=1}^{N}\kappa_n\partial_{12}H(\boldsymbol x_m^0,\boldsymbol x_n^0)+\sum_{n=1,n\neq m}^N\kappa_n(x_{m1}^0-x_{n1}^0)=0
	   	\end{equation*} 
	   	by $\omega^*$ in \eqref{1-5} being a steady point vortex system. It can be deduced immediately that \eqref{5-4}, \eqref{5-5} and the centralized condition
	   	\begin{equation*}
	   		\sum\limits_{n=1}^Nx^\ep_{n1}=N\pi,  \quad\quad \sum\limits_{n=1}^Nx^\ep_{n2}=-\frac{N\log\rho}{2}.
	   	\end{equation*}
	   	are solvable for 
	   	\begin{equation*}
	   		\boldsymbol X^\ep(\varepsilon, \boldsymbol R(s)):=\boldsymbol X^0+\ep \tilde{\boldsymbol X}(\ep, \boldsymbol R(s))
	   	\end{equation*}
	   	using the two-dimensional degenerate condition on the Kirchhoff-Routh path function $\mathcal W_N$ at $\boldsymbol X^0$. Then, by continuity of the Gateaux derivative
	   	$\partial_{u_n} F_m(\varepsilon, \boldsymbol X^\ep, \boldsymbol R(s))$, we can further verify that for $n=1,\cdots,N$, $\partial_{u_n}\tilde{\boldsymbol X}(\ep, \boldsymbol R(s)): Z^k\to (\mathbb{T}^2)^N$ is continuous.
	   \end{proof}

	   {\bf Proof of Theorem \ref{thm3}:} Let 
	   $$\tilde{\boldsymbol F}(\ep,\boldsymbol R(s))=(\tilde F_1(\varepsilon,\boldsymbol R(s)),\cdots,\tilde F_N(\varepsilon,\boldsymbol R(s))).$$
	   We first prove that $\partial_{\boldsymbol u}\tilde{\boldsymbol F}(\ep,\boldsymbol R(s))(v_1,\cdots,v_N): (Z^k)^N\to (Z^k_0)^N$ is an isomorphism. By chain rule, it holds
	   \begin{align*}
	   	&\quad\partial_{\boldsymbol u}\tilde F_m(\varepsilon,\boldsymbol R(s))(v_1,\cdots,v_N)\\
	   	&=\sum_{m=1}^N\nabla_{\boldsymbol x_m} F(0,\boldsymbol X^0,(1,\cdots,1))\partial_{u_m}\tilde{\boldsymbol X}(0, (1,\cdots,1))v_m+\partial_{u_m} F_m(0,\boldsymbol X^0,(1,\cdots,1))v_m.
	   \end{align*}
	   From the discussion in Lemma \ref{lem5-2}, we see $\tilde{\boldsymbol X}(0, (1,\cdots,1))v_m=\boldsymbol 0$. 
	   Hence, we have
	   $$\partial_{\boldsymbol u}\tilde F_m(\varepsilon,\boldsymbol R(s))(v_1,\cdots,v_N)=\partial_{u_m} F_m(0,\boldsymbol X^0,(1,\cdots,1))v_m.$$
	   Thus we have verified the isomorphism by Lemma \ref{lem5-1}.
	   
	   In view of the regularity of $\boldsymbol F$ and Lemma \ref{lem5-1}, we now can apply the implicit function theorem again, and claim that there exists $\epsilon_0>0$ such that
	   \begin{equation*}
	   	\left\{(\varepsilon,\boldsymbol R)\in [-\epsilon_0,\epsilon_0]\times V \ : \ \tilde {\boldsymbol F}(\varepsilon,\boldsymbol R(s))=\boldsymbol 0\right\}
	   \end{equation*}
	   is parameterized by a one-dimensional curve $\varepsilon\in [-\epsilon_0,\epsilon_0]\to (\varepsilon, \boldsymbol R_\varepsilon)$. 
	   
	   By a similar discussion as in the proof of Theorem \ref{thm1}, we see that for each $\varepsilon\in[-\epsilon_0,\epsilon_0]\setminus \{0\}$, it always holds $\boldsymbol R_\varepsilon\neq (1,\cdots,1)$, and use the symmetry to show that if $(\varepsilon,\boldsymbol R(s))$ is a solution to $\tilde {\boldsymbol F}(\varepsilon,\boldsymbol R(s))=0$, then $(-\varepsilon, \boldsymbol R(-s))$ is also a solution. The regularity and convexity can be derived by a same bootstrap procedure as for Theorem \ref{thm2}. Thus we complete the proof. \qed

	   \bigskip

	   \appendix
	   
	   \section{The $P$-function and its logarithmic derivative}\label{appA}
	   
	   The $P$-function defined on the annulus $\mathcal D_{\boldsymbol \zeta}=\{\boldsymbol\zeta\in\mathbb C \mid \rho<|\boldsymbol \zeta|\le 1\}$ is given by the infinite product
	   \begin{equation}\label{A-1}
	   P(\boldsymbol \zeta,\sqrt\rho)=(1-\boldsymbol \zeta)\prod_{n=1}^\infty(1-\rho^n\boldsymbol \zeta)(1-\rho^n/\boldsymbol \zeta),
	   \end{equation}
	   which has a simple zero at $\boldsymbol \zeta=1$ in $\mathcal D_{\boldsymbol \zeta}$, see also \cite{Abr,Cro}. The $K$-function defined in terms of the logarithmic derivative of $P(\boldsymbol \zeta,\sqrt\rho)$ is
	   \begin{equation*}
	   K(\boldsymbol \zeta,\sqrt\rho)=\frac{\boldsymbol \zeta P'(\boldsymbol \zeta,\sqrt\rho)}{P(\boldsymbol\zeta,\sqrt\rho)},
	   \end{equation*}
	   where the prime denotes the derivative with respect to $\boldsymbol \zeta$, namely, $P'(\boldsymbol \zeta,\sqrt\rho)=\frac{d P'(\boldsymbol \zeta,\sqrt\rho)}{d\boldsymbol\zeta}$. From \eqref{A-1}, we can deduce the infinite series formula
	   \begin{align*}
	   K(\boldsymbol \zeta,\sqrt\rho)&=\frac{\boldsymbol\zeta}{\boldsymbol\zeta-1}+\sum_{n=1}\left(\frac{-\rho^n\boldsymbol\zeta}{1-\rho^n\boldsymbol\zeta}+\frac{\rho^n\boldsymbol\zeta}{1-\rho^n/\boldsymbol\zeta}\right)\\
	   &=\frac{1}{\boldsymbol \zeta-1}+O(1), \quad \mathrm{as} \ \boldsymbol\zeta\to 1,
	   \end{align*} 
	   which has a simple pole singularity at $\boldsymbol\zeta=1$.
	   The properties of the $K$-function, 
	   \begin{equation}\label{A-2}
	   K(\rho\boldsymbol \zeta,\sqrt\rho)=K(\boldsymbol \zeta,\sqrt\rho)-1=-K(1/\boldsymbol\zeta,\sqrt\rho),
	   \end{equation}
	   \begin{equation}\label{A-3}
	   K(\boldsymbol \zeta,\sqrt\rho)+K(\bar{\boldsymbol \zeta},\sqrt\rho)=1, \quad \mathrm{on} \ |\boldsymbol \zeta|=1,
	   \end{equation}
	   \begin{equation}\label{A-4}
	   K(-1,\sqrt\rho)=\frac{1}{2},
	   \end{equation}
	   are applied in Section \ref{sec2} for finding the equilibrium of point vortices.
	   
	   \bigskip
	   
	   \noindent{\bf Conflict of interest statement:} On behalf of all authors, the corresponding author states that there is no conflict of interest.
	   
	   \bigskip
	   
	   \noindent{\bf Data available statement:} Our manuscript has no associated data.

	   \bigskip
	   
	   \noindent{\bf Acknowledgement:}  T.S. is partially supported by JSPS KAKENHI, Grant No. 23H00086. C.Z. is supported by by NNSF of China (Grant 12301142 and 12371212), CPSF (Grant 2022M722286), NSFSC (Grant 2024NSFSC1341), and International Visiting Program for Excellent Young Scholars of SCU.
	   
	   \bigskip	
	   
	   \phantom{s}
	   \thispagestyle{empty}

   \end{document}